\documentclass[12pt, twoside, leqno]{article}
\usepackage{amsmath,amsthm}
\usepackage{amssymb,latexsym}
\usepackage{enumerate}

\pagestyle{myheadings}
\markboth{G. Corach, G. Fongi and A. Maestripieri}{Weighted projections into closed subspaces}


\newtheorem{fed}{Definition}[section]
\newtheorem{theorem}[fed]{Theorem}
\newtheorem{proposition}[fed]{Proposition}
\newtheorem{corollary}[fed]{Corollary}
\newtheorem{definition}[fed]{Definition}
\newtheorem{lemma}[fed]{Lemma}
\newtheorem{remark}[fed]{Remark}

\setlength{\parindent}{0cm} 


\frenchspacing

\textwidth=13.5cm
\textheight=23cm
\parindent=16pt
\oddsidemargin=-0.5cm
\evensidemargin=-0.5cm
\topmargin=-0.5cm



\newcommand{\HH}{\mathcal{H}}

\newcommand{\cS}{\mathcal{S}}

\begin{document}


\baselineskip=17pt


\title{Weighted projections into  closed subspaces}

\author{G. Corach\\
Departamento de Matem\'atica, FI-UBA and\\
Instituto Argentino de Matem\'atica\\
Buenos Aires, Argentina\\
E-mail: gcorach@fi.uba.ar
\and
G. Fongi\\
Instituto Argentino de Matem\'atica\\
Buenos Aires, Argentina\\
E-mail: gfongi@gmail.com
\and
A. Maestripieri\\
Departamento de Matem\'atica, FI-UBA and\\
Instituto Argentino de Matem\'atica\\
Buenos Aires, Argentina\\
E-mail:amaestri@fi.uba.ar }

\date{}

\maketitle


\renewcommand{\thefootnote}{}

\footnote{2010 \emph{Mathematics Subject Classification}: Primary 47A07; Secondary 47A58.} 

\footnote{\emph{Key words and phrases}:  Projections under seminorm, compatibility, weighted least squares problems.}

\renewcommand{\thefootnote}{\arabic{footnote}}
\setcounter{footnote}{0}


\begin{abstract}
In this paper we  study $A$-projections, i.e. operators of a Hilbert space $\HH$ which act as projections when a seminorm is considered in $\HH$. $A$-projections were introduced by Mitra and Rao \cite{[MitRao74]} for finite dimensional spaces. We relate this concept to the  theory of compatibility between positive operators and closed subspaces of $\HH$. We also study the relationship between  weighted least squares problems and compatibility.
\end{abstract}


\section{Introduction}

In 1974, S.K. Mitra and C.R. Rao \cite{[MitRao74]} introduced the notion of projection into a subspace with respect to a seminorm. More precisely, given a positive (semidefinite) matrix $A\in \mathbb{C}^{n\times n}$ and   a subspace $\mathcal S$ of $\mathbb{C}^n$, a matrix $T\in \mathbb{C}^{n\times n}$ is called an \emph{$A$-projection into $\mathcal S$} if $R(T)\subseteq \mathcal S$ and
$$
\|y-Ty\|_A\leq \|y-s\|_A, \qquad \textrm{ for all }\; y\in \mathbb{C}^n, s\in \mathcal S,
$$
where $\|z\|_A:=\langle Az, z\rangle ^{1/2}=:\langle z, z\rangle_A ^{1/2}$. Notice that  an $A$-projection $T$ need not to be an idempotent, but $AT^2=AT$. This notion is related to very general least squares problems and Mitra and Rao have found several applications in statistics, in particular in linear models, see also \cite{[RaoMit73],[TakTiaYan07],[TiaTak09]}.

In 1994,  S. Hassi and K. Nordstr\"om \cite{[HasNor94]} started the study of projections onto closed subspaces in Hilbert spaces, which are orthogonal with respect to an indefinite seminorm. Their paper suggested the notion of compatibility, proposed by G. Corach, A. Maestripieri and D. Stojanoff \cite{[CorMaeSto01],[CorMaeSto02],[CorMaeSto02_splines]}. A closed subspace $\mathcal S$ of a Hilbert space $\HH$ is said to be \emph{compatible} with a positive (semidefinite bounded linear) operator $A$  on $\HH$ if there exists a (bounded linear) projection $Q$ acting on $\HH$ such that $\mathcal S$ is the image of $Q$ and $AQ=Q^*A$. This equality means that $Q$ is selfadjoint with respect to the semi-inner product defined by $A$. This notion  has several applications in generalized contractions \cite{[CasSuc08],[Suc06],[Suc09]}, Krein space operators \cite{[HasNor94],[MaMPe06]}, frame theory \cite{[AntCorRuiSto05]}, least squares problems \cite{[CorMae05]}, signal processing \cite{[Eld03],[Eldwer05]} and so on. It should be noticed that non compatible pairs exist only if $\HH$ has infinite dimension \cite[6.2]{[CorMaeSto02_splines]}.  Therefore, in order to study the relationship between Mitra-Rao's theory with the compatibility results, which is the main goal of this paper, it is necessary to extend that theory to the infinite dimensional case.

Sections 2 contains notations and preliminary results needed in the sequel, in particular the well-known Douglas factorization  theorem \cite{[Dou66],[FilWil71]}. Section 3 contains a short resum\'e of definitions and the main results of compatibility theory with no proof. In particular, if $(A, \mathcal S)$ is compatible then a description of the set
$$
\mathcal P(A, \mathcal S)=\{Q\in L(\HH): Q^2=Q, R(Q)=\mathcal S, AQ=Q^*A \}
$$
is presented. Section 4 is devoted to develop the theory of $A$-projections in the context of infinite dimensional Hilbert spaces. We only include proofs if they are not similar to those for finite dimensional spaces provided by Mitra and Rao \cite{[MitRao74],[RaoMit73]}. The set $\Pi(A, \mathcal S)=\{T\in L(\HH): T \textrm { is an } A\textrm{-projection into }\\ \mathcal S \}$ is described and the precise relationship between $\mathcal P(A, \mathcal S)$ and $\Pi(A, \mathcal S)$ is presented, in the main result of the section, together with some minimality properties.
Section 5 deals with least squares problems.  An operator $G\in L(\HH)$ is called an  \emph{$A$-inverse} of a closed range operator $B$ if  for each $y \in \HH$, $Gy$ is an $A$-LSS of $Bx=y$, i.e.
$$
\|BGy-y\|_{A} \leq \|Bx-y\|_{A}, \quad x \in \HH.
$$We show that the existence of an $A$-inverse of an operator $B$ is equivalent to the compatibility of the pair $(A, R(B))$. Moreover the set of all $A$-inverses of $B$ is described. The second part of this section deals with restricted $A$-inverses of a certain $B$: $G\in L(\HH)$ is called an $A$-inverse of $B$ restricted to  $\mathcal M$  if $R(G)\subseteq\mathcal M$ and
$$
\|BGy-y\|_A\leq \|Bx-y\|_A, \quad \forall x \in \mathcal M.
$$
This notion, also due to Rao and Mitra \cite{[RaoMit73]}, is completely described in terms of some compatibility conditions. In particular, there exists such a $G$ if and only if $(A, B(\mathcal M))$ is compatible. The final part deals with the least squares solution of an equation like
$$
Bx=y
$$
where the vectors $x$'s are measured with the seminorm $\|\,\|_{A_1}$ defined by $A_1\in L(\HH)^+$ and the vectors $y$'s are measured with $\|\,\|_{A_2}$, for another $A_2\in L(\HH)^+$. Again, the situation is completely described by using certain compatibility conditions. Analogous problems have been considered in \cite{[CorMae05]} and \cite{[GirMaeMPe10]}. It should also be mentioned that L. Eld\'en \cite{[Eld80]} was the first to study this problem in finite dimensions.

\section{Preliminaries}

Throughout, $\HH$ denotes a  separable complex Hilbert space, $L(\HH)$  the algebra of
linear bounded operators of $\HH$  and $L(\HH)^+$ the cone of positive operators.
 Also, $\mathcal Q$ denotes the subset of $L(\HH)$ of oblique projections, i.e., $\mathcal Q=\{Q\in L(\HH),\, Q^2=Q\}$ and $\mathcal P$ the set of orthogonal projections, i.e. $\mathcal P=\{P\in L(\HH): P^2=P=P^*\}$.

For every $A \in L(\HH)$, $R(A)$ denotes the range of $A$ and $N(A)$ its nullspace.
Given  $\mathcal{M}$ and $\mathcal{N}$ two closed subspaces of $\HH$, then  $\mathcal{M}\dot{+}\mathcal{N}$ denotes the direct sum  of $\mathcal M$ and $\mathcal N$,  $\mathcal{M}\oplus \mathcal{N}$  the orthogonal sum and $\mathcal M\ominus \mathcal N=\mathcal M \cap (\mathcal M\cap\mathcal N)^\perp$.
If $\mathcal{M}\dot{+}\mathcal{N}=\HH$,  denote by $P_{\mathcal{M}// \mathcal{N}}$  the  oblique projection
with range  $\mathcal{M}$ and nullspace $\mathcal{N}$; in particular, $P_{\mathcal{M}}=P_{\mathcal{M}// \mathcal{M}^\perp}$.

Given a closed range operator $A$,  $A^\dagger$  denotes  the \emph{Moore Penrose inverse} of $A$, i.e. $A^\dagger$ is the unique solution of the system
$$
AXA=A, \quad XAX=X, \quad (AX)^*=AX, \quad (XA)^*=XA.
$$
%

Given  a closed subspace $\mathcal S$ of $\HH$, then $P_{\mathcal{S}}$ induces a matrix decomposition  for each $A \in L(\HH)$ as
follows: if $P=P_\cS$ then  $A \in L(\HH)$ can be written as
$$
A=\left(
    \begin{array}{cc}
     a_{11} & a_{12}\\
      a_{21} & a_{22} \\
    \end{array}
  \right),
$$
where $ a_{11}=PA{P}{|_{\mathcal{S}}} \in L(\mathcal{S})$, $
a_{12}=PA{(I-P)}{|_{{\mathcal{S}}^\bot}} \in
L(\mathcal{S}^\bot,\mathcal{S})$, $
a_{21}=(I-P)A{P}{|_{\mathcal{S}}} \in
L(\mathcal{S},{\mathcal{S}}^\bot)$ and $
a_{22}=(I-P)A{(I-P)}{|_{{\mathcal{S}}^\bot}} \in
L({\mathcal{S}}^\bot)$.
If $A\in L(\HH)^+$, then
\begin{equation}\label{desc matricial de op positivo}
A=\left(
    \begin{array}{cc}
     a & b\\
      b^* & c \\
    \end{array}
  \right),
\end{equation}
 with $R(b)\subseteq R(a^{1/2})$, see \cite{[AndTra75]}. Throughout this work, we will use the matrix representation of $A$ given by \eqref{desc matricial de op positivo}.

\medskip

Given  $A \in L(\HH)^+$, consider the following semi-inner product on $\HH$:
$$
\langle x,y \rangle_A=\langle Ax,y \rangle, \qquad x,y \in \HH.
$$
The seminorm associated  is given by
$$
\|x\|_A=\langle x, x\rangle_A^{1/2}=\|A^{1/2}x \|, \qquad x\in \HH.
$$

 An operator $C\in L(\HH)$  is called  \emph{$A$-selfadjoint}  if $\langle Cx, y\rangle_A= \langle x, Cy\rangle_A$ for all $x, y\in\HH$, or
equivalently $AC=C^*A$. 

\medskip

The following result, due to R. G. Douglas, characterizes the operator range inclusion.

\begin{theorem} (Douglas).
 Consider Hilbert spaces $\HH, \mathcal K, \mathcal G$ and operators  $A\in L(\HH, \mathcal G), B\in L(\mathcal K, \mathcal G)$. The following conditions are equivalent:
\begin{enumerate}
 \item $R(B)\subseteq R(A)$,
 \item $BB^*\leq\lambda AA^*$, for some $\lambda>0$,
 \item the equation $AX=B$ has a solution in  $L(\mathcal K,\HH)$.

In this case, there exists a unique $D\in L(\mathcal K,\HH)$ such that $AD=B$ and $R(D)\subseteq \overline{R(A^*)}$;
moreover, $\|D\|^2=\inf \{\lambda >0:~ BB^*\le \lambda AA^*\}$ and $N(D)=N(B)$. This solution is called the reduced solution of $AX=B$.
\end{enumerate}
\end{theorem}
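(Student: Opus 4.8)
The plan is to run the cycle of implications $(3)\Rightarrow(1)\Rightarrow(3)$ together with $(2)\Rightarrow(1)$ and $(1)\Rightarrow(2)$, and then read off the properties of the reduced solution from the operator built for $(1)\Rightarrow(3)$. The implication $(3)\Rightarrow(1)$ is immediate: if $AX=B$, then $R(B)=R(AX)\subseteq R(A)$. For $(1)\Rightarrow(3)$, I would use the orthogonal decomposition $\HH=\overline{R(A^*)}\oplus N(A)$ and define, for each $k\in\mathcal K$, the vector $Dk$ to be the unique element of $N(A)^\perp=\overline{R(A^*)}$ satisfying $A(Dk)=Bk$ (such a preimage exists by $(1)$ and is unique modulo $N(A)$, so unique in $N(A)^\perp$). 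Linearity of $D$ is clear, and boundedness follows from the closed graph theorem: if $k_n\to k$ and $Dk_n\to y$, then $Ay=\lim A(Dk_n)=\lim Bk_n=Bk$ and $y\in\overline{R(A^*)}$, so $y=Dk$. This $D$ is the reduced solution.

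The analytic heart is $(2)\Rightarrow(1)$. From $BB^*\le\lambda AA^*$ one gets $\|B^*g\|^2\le\lambda\|A^*g\|^2$ for every $g\in\mathcal G$, so the assignment $A^*g\mapsto B^*g$ is well defined and bounded by $\sqrt{\lambda}$ on $R(A^*)$; extending it by continuity to $\overline{R(A^*)}$ and by $0$ on $R(A^*)^\perp=N(A)$ produces $C\in L(\mathcal G,\mathcal K)$ with $\|C\|\le\sqrt{\lambda}$ and $CA^*=B^*$, hence $AC^*=B$. In particular $R(B)=R(AC^*)\subseteq R(A)$, and $C^*$ is a solution of $AX=B$. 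The remaining implication $(1)\Rightarrow(2)$ is obtained from the $D$ above: since $DD^*\le\|D\|^2 I$, we get $BB^*=A(DD^*)A^*\le\|D\|^2\,AA^*$.

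For the supplementary assertions, uniqueness of $D$ follows because two solutions of $AX=B$ with range in $\overline{R(A^*)}$ differ by an operator with range in $N(A)\cap\overline{R(A^*)}=\{0\}$; and $N(D)=N(B)$ because $N(D)\subseteq N(AD)=N(B)$, while $Bk=0$ forces $Dk\in N(A)\cap\overline{R(A^*)}=\{0\}$. For the norm identity, $(1)\Rightarrow(2)$ already gives $\inf\{\lambda>0:BB^*\le\lambda AA^*\}\le\|D\|^2$; conversely, for any such $\lambda$ take the operator $C$ from the $(2)\Rightarrow(1)$ step, and note that $AP_{N(A)^\perp}=A$, so $P_{N(A)^\perp}C^*$ also solves $AX=B$ and has range in $\overline{R(A^*)}$, hence equals $D$ by uniqueness, whence $\|D\|=\|P_{N(A)^\perp}C^*\|\le\|C^*\|=\|C\|\le\sqrt{\lambda}$. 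The only genuine obstacle I anticipate is precisely $(2)\Rightarrow(1)$: one must manufacture a bounded operator from a mere operator inequality, and the natural candidate is defined only on the possibly non-closed subspace $R(A^*)$ — it is exactly the inequality $BB^*\le\lambda AA^*$ that makes that candidate well defined and continuous there, so that it extends; everything else is bookkeeping with the decomposition $\HH=\overline{R(A^*)}\oplus N(A)$.
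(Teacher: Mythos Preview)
Your argument is correct and is essentially the classical proof of Douglas' lemma; the paper itself does not prove the statement but simply refers the reader to \cite[Theorem~1]{[Dou66]} and \cite[Theorem~2.1]{[FilWil71]}, so there is nothing to compare against beyond noting that your construction of the map $A^*g\mapsto B^*g$ and its extension is precisely Douglas' original device.

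One small slip: the operator $C$ you build in the step $(2)\Rightarrow(1)$ lives in $L(\HH,\mathcal K)$, not $L(\mathcal G,\mathcal K)$, since $R(A^*)\subseteq\HH$ and you are extending over $\HH=\overline{R(A^*)}\oplus N(A)$; the subsequent identities $CA^*=B^*$ and $AC^*=B$ are already consistent with this. Also, in the norm argument you need not apply $P_{N(A)^\perp}$ to $C^*$: since $C$ vanishes on $N(A)$ by construction, one has $R(C^*)\subseteq N(C)^\perp\subseteq N(A)^\perp=\overline{R(A^*)}$, so $C^*=D$ directly and $\|D\|\le\sqrt{\lambda}$ follows.
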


The reader is referred to \cite[Theorem 1]{[Dou66]} and \cite[Theorem 2.1]{[FilWil71]} for the proof of Douglas' theorem.

\bigskip

\section{Compatibility}

 Given $A\in L(\HH)^+$ and $\mathcal S$ a closed subspace of $\HH$, consider the following set
 $$
 \mathcal P(A, \mathcal S)=\{ Q\in \mathcal Q: R(Q)=\mathcal S, AQ=Q^*A\}.
 $$

 The pair $(A, \mathcal S)$ is called \emph{compatible} if the set  $\mathcal P(A, \mathcal S)$ is not empty, or equivalently,  if there exists a projection $Q\in \mathcal Q$ with range $\mathcal S$ such that $AQ=Q^*A$.

 The following proposition collects some results about compatibility that can be found in \cite{[CorMaeSto02],[CorMaeSto06]}.

\begin{proposition}\label{caracterizaciones par compatible}
Consider $A\in L(\HH)^+$ with matrix form given by equation (\ref{desc matricial de op positivo}) and $\mathcal S$ a closed subspace of $\HH$.
 \begin{enumerate}
  \item If  the pair $(A, \mathcal S)$ is compatible, then $\mathcal S+ N(A)$ is closed.
  \item If $A\in L(\HH)^+$ has closed range and $\mathcal S+ N(A)$ is closed, then $(A, \mathcal S)$ is compatible.
  \item The  pair $(A, \mathcal S)$ is compatible if and only if $\HH=\mathcal S + A(\mathcal S)^\perp$.
  \item  The  pair $(A, \mathcal S)$ is compatible if and only if $R(b)\subseteq R(a)$.
 \end{enumerate}
\end{proposition}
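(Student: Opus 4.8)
The plan is to reduce all four items to a single solvability question for the corner operators $a$ and $b$. Put $P=P_{\cS}$ and use the block form \eqref{desc matricial de op positivo}. If $Q\in\mathcal Q$ has range $\cS$, then its lower block-row vanishes (because $R(Q)\subseteq\cS$), and $Q^{2}=Q$ forces the $(1,1)$ block to be $I_{\cS}$ (it is an idempotent of $L(\cS)$ with range $R(Q)=\cS$); thus $Q=\left(\begin{smallmatrix}I&x\\0&0\end{smallmatrix}\right)$ for a unique $x\in L(\cS^{\perp},\cS)$, and conversely every such $Q$ lies in $\mathcal Q$ with range $\cS$. Computing $AQ$ and $Q^{*}A$ in this representation, the identity $AQ=Q^{*}A$ collapses to the single equation $ax=b$ (the $(2,1)$ block is the adjoint of this, and the $(2,2)$ block $b^{*}x=x^{*}b$ follows from it). Hence $(A,\cS)$ is compatible if and only if $ax=b$ has a solution $x\in L(\cS^{\perp},\cS)$, and item (4) is then immediate from Douglas' theorem applied to $a\in L(\cS)$ and $b\in L(\cS^{\perp},\cS)$: the equation is solvable exactly when $R(b)\subseteq R(a)$.

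For item (3), I would compute $(A\cS)^{\perp}$ in the same coordinates: for $s=s_{1}\oplus 0$ one has $As=(as_{1})\oplus(b^{*}s_{1})$, so $y_{1}\oplus y_{2}\in(A\cS)^{\perp}$ if and only if $ay_{1}+by_{2}=0$. Therefore $\HH=\cS+(A\cS)^{\perp}$ holds precisely when, for every $z_{2}\in\cS^{\perp}$, the equation $ay_{1}=-bz_{2}$ is solvable in $\cS$, i.e. when $R(b)\subseteq R(a)$; by item (4) this is exactly compatibility. (More conceptually: if $Q=\left(\begin{smallmatrix}I&x\\0&0\end{smallmatrix}\right)\in\mathcal P(A,\cS)$ then $ax=b$ gives $A\,N(Q)\subseteq\cS^{\perp}$, so $N(Q)\subseteq(A\cS)^{\perp}$ and $\HH=\cS\dot{+}N(Q)\subseteq\cS+(A\cS)^{\perp}$; for the converse one takes $Q$ to be the projection onto $\cS$ along a closed complement of $\cS\cap(A\cS)^{\perp}$ inside $(A\cS)^{\perp}$ and verifies $AQ=Q^{*}A$ from $A\,N(Q)\subseteq\cS^{\perp}$.)

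For item (1), assume $(A,\cS)$ compatible and fix $x$ with $ax=b$. Then $A=L\,\mathrm{diag}(a,c_{0})\,L^{*}$, with $L=\left(\begin{smallmatrix}I&0\\x^{*}&I\end{smallmatrix}\right)$ boundedly invertible and $c_{0}=c-x^{*}ax\in L(\cS^{\perp})$, so $N(A)=(L^{*})^{-1}\bigl(N(a)\oplus N(c_{0})\bigr)$. Since $L^{*}$ is the identity on $\cS$ and $N(a)\subseteq\cS$, this gives $\cS+N(A)=(L^{*})^{-1}\bigl(\cS+N(a)\oplus N(c_{0})\bigr)=(L^{*})^{-1}\bigl(\cS\oplus N(c_{0})\bigr)$, which is closed because $\cS\oplus N(c_{0})$ is closed and $(L^{*})^{-1}$ is a homeomorphism.

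For item (2), assume both $R(A)$ and $\cS+N(A)$ are closed. Then $R(A^{1/2})=R(A)$ and $N(A^{1/2})=N(A)$, and by the standard fact that a closed-range operator $T$ maps a closed subspace $\mathcal M$ onto a closed subspace exactly when $\mathcal M+N(T)$ is closed, $A^{1/2}(\cS)$ is closed. Taking $T=A^{1/2}P_{\cS}$ one has $T^{*}T=P_{\cS}AP_{\cS}$, hence $R(a)=R(T^{*}T)$, which is closed since $R(T)=A^{1/2}(\cS)$ is. Finally $R(b)\subseteq R(a^{1/2})$ by \eqref{desc matricial de op positivo}, and $R(a^{1/2})=R(a)$ because $a\ge 0$ has closed range; so $R(b)\subseteq R(a)$, and compatibility follows from item (4). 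The step I expect to need genuine work, rather than bookkeeping, is this item (2): items (1), (3) and (4) are short consequences of the block reduction together with Douglas' theorem, whereas (2) requires combining the two closedness hypotheses with operator-range facts to force $R(a)$ closed. The hypothesis that $R(A)$ be closed is essential here: $\cS+N(A)$ can be closed, even equal to $\HH$, while $(A,\cS)$ is not compatible, if $R(A)$ fails to be closed.
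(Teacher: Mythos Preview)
Your argument is correct in all four items. Note, however, that the paper does not give its own proof of this proposition: it is stated as a summary of known compatibility results, with proofs deferred to \cite{[CorMaeSto02],[CorMaeSto06]}. There is therefore nothing in the present paper to compare your proof against; what you have written is a self-contained verification that stands on its own.

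A couple of minor remarks. In item (3), your parenthetical ``more conceptual'' converse works, but it implicitly uses the closed graph theorem to guarantee that the projection onto $\cS$ along $(A\cS)^{\perp}\ominus\mathcal N$ is bounded (two closed subspaces whose algebraic direct sum is all of $\HH$ always give a bounded projection); it might be worth saying this explicitly. In item (2), the step ``$T(\mathcal M)$ is closed iff $\mathcal M+N(T)$ is closed, for $T$ of closed range'' is the one genuinely nontrivial ingredient; it follows from the fact that $T|_{N(T)^{\perp}}$ is an isomorphism onto $R(T)$ together with the standard equivalence between closedness of $\mathcal M+\mathcal N$ and closedness of $P_{\mathcal N^{\perp}}\mathcal M$. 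Everything else is, as you say, block bookkeeping plus Douglas' theorem.
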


\smallskip

As a consequence of Douglas' theorem and item 4 of  the above proposition, we obtain the following characterization of the set $\mathcal P(A, \mathcal S)$, see \cite{[CorMaeSto01]} for details.

\begin{corollary}\label{car matricial PAS}
Consider $(A, \mathcal S)$ compatible, then
$$
\mathcal P(A, \mathcal S)=\{\left(
    \begin{array}{cc}
     1 & x\\
      0 & 0 \\
    \end{array}
  \right): x\in L(\mathcal S^\perp, \mathcal S) \,\;  and \,\; ax=b\}.
$$
\end{corollary}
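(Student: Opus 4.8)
The plan is to combine the matrix representation of $A$ from equation~\eqref{desc matricial de op positivo} with item~4 of Proposition~\ref{caracterizaciones par compatible} and the uniqueness of the reduced solution in Douglas' theorem. First I would fix $P = P_\cS$ and write an arbitrary $Q \in L(\HH)$ in its $2\times 2$ block form with respect to the decomposition $\HH = \cS \oplus \cS^\perp$, say $Q = \left(\begin{smallmatrix} q_{11} & q_{12} \\ q_{21} & q_{22} \end{smallmatrix}\right)$. Imposing $R(Q) = \cS$ forces, together with $Q^2 = Q$, the bottom row to vanish and $q_{11} = 1_\cS$; concretely $R(Q) \subseteq \cS$ gives $q_{21} = 0$ and $q_{22} = 0$, and then $Q^2 = Q$ gives $q_{11}^2 = q_{11}$ and $q_{11} q_{12} = q_{12}$, while $R(Q) = \cS$ (all of $\cS$) forces $q_{11}$ to be onto $\cS$, hence $q_{11} = 1_\cS$ and automatically $q_{11}q_{12} = q_{12}$. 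So every candidate has the form $Q = \left(\begin{smallmatrix} 1 & x \\ 0 & 0 \end{smallmatrix}\right)$ with $x \in L(\cS^\perp, \cS)$, and conversely any such operator is an idempotent with range exactly $\cS$.

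Next I would translate the selfadjointness condition $AQ = Q^*A$ into the block language. With $A = \left(\begin{smallmatrix} a & b \\ b^* & c \end{smallmatrix}\right)$ and $Q = \left(\begin{smallmatrix} 1 & x \\ 0 & 0 \end{smallmatrix}\right)$, a direct multiplication gives $AQ = \left(\begin{smallmatrix} a & ax \\ b^* & b^*x \end{smallmatrix}\right)$ and $Q^*A = \left(\begin{smallmatrix} 1 & 0 \\ x^* & 0 \end{smallmatrix}\right)\left(\begin{smallmatrix} a & b \\ b^* & c \end{smallmatrix}\right) = \left(\begin{smallmatrix} a & b \\ x^*a & x^*b \end{smallmatrix}\right)$. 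Comparing blocks, the $(1,1)$ entries agree trivially; the $(1,2)$ entry yields $ax = b$; the $(2,1)$ entry yields $b^* = x^*a$, which is just the adjoint of $ax = b$ (using $a = a^*$); and the $(2,2)$ entry yields $b^*x = x^*b$, which follows from $ax = b$ since then $b^*x = (ax)^*x = x^*ax = x^*(ax) = x^*b$. Hence $AQ = Q^*A$ is equivalent to the single equation $ax = b$.

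Putting the two halves together gives exactly the claimed description of $\mathcal P(A, \cS)$. One should note that the set is nonempty precisely when the equation $ax = b$ is solvable for $x \in L(\cS^\perp, \cS)$, which by Douglas' theorem holds iff $R(b) \subseteq R(a)$ — this is the content of item~4 of Proposition~\ref{caracterizaciones par compatible}, so the hypothesis that $(A,\cS)$ is compatible guarantees the set on the right is nonempty, consistently with the left side.

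I do not expect any serious obstacle here: the argument is essentially a bookkeeping computation with $2\times 2$ operator blocks. The only point requiring a little care is the deduction, in the first paragraph, that $R(Q) = \cS$ (equality, not just inclusion) together with idempotency forces $q_{11} = 1_\cS$: one argues that for $s \in \cS$ there is $\eta \in \HH$ with $Q\eta = s$, and since $Q\eta = q_{11}(P\eta)$ lies in $R(q_{11})$, we get $R(q_{11}) = \cS$; combined with $q_{11}^2 = q_{11}$ (so $q_{11}$ is the identity on its own range) this gives $q_{11} = 1_\cS$. The verification that $ax=b$ makes the lower-right block identity $b^*x = x^*b$ automatic is the other spot worth spelling out, but it is immediate as shown above.
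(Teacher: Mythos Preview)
Your approach is correct and is exactly the direct block-matrix computation that the paper alludes to (the paper itself gives no proof beyond citing \cite{[CorMaeSto01]}). There is one small slip worth fixing: in your last paragraph you write ``$Q\eta = q_{11}(P\eta)$'', but with the bottom row already shown to vanish one actually has $Q\eta = q_{11}(P\eta) + q_{12}((I-P)\eta)$, so the displayed equality is false as written. The conclusion $q_{11}=1_{\mathcal S}$ is still correct: either observe that $q_{11}q_{12}=q_{12}$ gives $R(q_{12})\subseteq R(q_{11})$, whence $R(Q)=R(q_{11})=\mathcal S$ and the idempotent $q_{11}$ must be $1_{\mathcal S}$; or, more directly, note that for $s\in\mathcal S$ there is $\eta$ with $Q\eta=s$, hence $Qs=Q^2\eta=Q\eta=s$, and $Q|_{\mathcal S}=q_{11}$ forces $q_{11}=1_{\mathcal S}$.
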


If the pair $(A, \mathcal S)$ is compatible, there is a distinguished element $P_{A, \mathcal S}\in \mathcal P(A,\mathcal S)$, namely the unique projection onto $\mathcal S$ with kernel $A(\mathcal S)^\perp\ominus \mathcal N$, where $\mathcal N=A(\mathcal S)^\perp\cap\mathcal S=N(A)\cap \mathcal S.$ By \cite[Proposition 4.1]{[CorMaeSto02_splines]}, $P_{A,\mathcal S}=P_{A, \mathcal S\ominus \mathcal N}+P_{\mathcal N}$ and $P_{A, \mathcal S\ominus \mathcal N}=P_{\mathcal S\ominus \mathcal N//A(\mathcal S)^\perp}$. Then the matrix decomposition of $P_{A, \mathcal S}$ induced by $P_\mathcal S$ is  given by
$$
P_{A, \mathcal S}=\left(
    \begin{array}{cc}
     1 & d\\
      0 & 0 \\
    \end{array}
  \right),
$$
where $d\in L(\mathcal S^\perp, \mathcal S)$ is the reduced solution of $ax=b$.

It is easy to see that the pair $(A, \mathcal S)$ is compatible if and only if the pair $(A, \mathcal S\ominus \mathcal N)$ is compatible.

%
%
%
%
%


\medskip
%
%

\section{Weighted projections}

Along this work $A$ is a positive bounded operator, i.e. $A\in L(\HH)^+$ and $\mathcal S$ is a closed subspace of $\HH$.

The following definition is due to Mitra and Rao for operators acting on finite dimensional Hilbert spaces, see \cite{[MitRao74]}.

\begin{definition}\rm An operator $T\in L(\HH)$ is called an
\emph{$A$-projection into $\mathcal{S}$} if
 $R(T) \subseteq \mathcal{S}$  and
 \begin{equation}\label{definicion A-proy}
 \|y-Ty\|_A\leq \|y-s\|_A, \quad \textit{ for all } y \in \HH,\quad \textit{  for all } s \in  \mathcal{S}.
 \end{equation}
$T$ is called  an \emph{$A$-projection} if $T$ is an $A$-projection into $\overline{R(T)}$.
\end{definition}

An $A$-projection into $\mathcal S$ is also called an \emph{$A$-weighted least squares process}, see \cite{[CorGirMae09],[Sar52]}.

\begin{remark}\rm
It is not difficult to see that inequality (\ref{definicion A-proy}) alone does not  imply the boundedness of $T$. Indeed, if  $A$ has infinite dimensional nullspace it is enought to consider $T=T_1P_{N(A)}+P_{\overline{R(A)}}$, with  $T_1: N(A)\rightarrow N(A)$ unbounded.
Similarly, it can  be proved that the range of an  $A$-projection is not necessarily closed.
\end{remark}

\begin{definition}\rm The operator $T\in L(\HH)$ is an \emph{$A$-idempotent} if $AT^2=AT$.
\end{definition}

Observe that the definition of $A$-idempotent only depends  on $N(A)$ in the sense that if $A, B\in L(\HH)$ are such that $N(A)=N(B)$ then $T$ is $A$-idempotent if and only if $T$ is $B$-idempotent.

The next two propositions generalize some of the results in \cite{[MitRao74]}. The proofs  for infinite dimensional Hilbert spaces follow essentially the same steps.

\begin{proposition}\label{caracterizacion1 A-proyeccion}
Let $T\in L(\HH)$. The following statements are equivalent:
\begin{enumerate}
 \item  $T$ is an $A$-projection,
 \item $T^*AT=AT$,
 \item $AT=T^*A$ and $AT^2=AT$; or equivalently, $T$ is an $A$-selfadjoint and also  $A$-idempotent.
\end{enumerate}
\end{proposition}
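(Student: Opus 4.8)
The plan is to prove the chain $(1)\Rightarrow(2)\Rightarrow(3)\Rightarrow(1)$, working with the seminorm characterization of the defining inequality.

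First, for $(1)\Rightarrow(2)$: Suppose $T$ is an $A$-projection, i.e. $R(T)\subseteq\overline{R(T)}=:\mathcal{S}$ and $\|y-Ty\|_A\le\|y-s\|_A$ for all $y\in\HH$, $s\in\mathcal{S}$. Fix $y$ and set $s_0=Ty\in\mathcal S$. Since $Tz\in\mathcal S$ for every $z$, in particular $Ty + tTz \in \overline{R(T)} = \mathcal S$ for all scalars $t$; but it is cleaner to note that $Ty + t\,w \in \mathcal S$ is available only for $w$ in the closure of the range, which does hold for $w=Tz$. So consider $\varphi(t)=\|y - Ty - t\,Tz\|_A^2$ as a function of real (or complex) $t$; by the defining inequality this is minimized at $t=0$, so the first-order term vanishes: $\operatorname{Re}\langle A(y-Ty), Tz\rangle = 0$ for all $y,z\in\HH$ (and replacing $z$ by $iz$ kills the imaginary part too). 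Thus $\langle T^*A(y-Ty), z\rangle = 0$ for all $y,z$, i.e. $T^*A(I-T)=0$, which is exactly $T^*AT = T^*A$. Wait — I need $T^*AT = AT$, so I must also extract $T^*A = A$ on the relevant piece; actually the cleaner route is: from $\|y-Ty\|_A\le\|y-s\|_A$ for all $s\in\mathcal S$, and since $Ty\in\mathcal S$, $Ty$ is an $A$-orthogonal projection of $y$ onto $\mathcal S$ in the seminorm sense, giving $\langle A(y-Ty), s\rangle = 0$ for all $s\in\mathcal S$; taking $s = Tz$ yields $\langle A(I-T)y, Tz\rangle = 0$, i.e. $T^*A(I-T) = 0$, i.e. $T^*AT = T^*A$. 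Then, to get $(2)$ in the stated form $T^*AT = AT$, apply the adjoint: $T^*AT$ is selfadjoint, so $T^*AT = (T^*AT)^* = T^*A^*T = T^*AT$ — that's vacuous; instead note $T^*A = (T^*AT)^* $ would need $T^*A$ selfadjoint. Let me instead observe: $T^*AT = T^*A$ implies, taking adjoints, $T^*AT = A^*T = AT$. That is the key trick: $T^*AT$ is selfadjoint (since $A\ge 0$), hence equals its own adjoint $(T^*A)^* = AT$ once we know $T^*AT = T^*A$. So $(2)$ follows.

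For $(2)\Rightarrow(3)$: Assume $T^*AT = AT$. Taking adjoints gives $T^*AT = T^*A$, so combining, $AT = T^*A$, which is $A$-selfadjointness of $T$. Then $AT^2 = (AT)T = (T^*A)T = T^*(AT) = T^*(T^*A) = (T^*)^2 A$; hmm, I want $AT^2 = AT$. Better: $AT^2 = (T^*A)T = T^*(AT) = T^*(T^*AT)\cdot$? Use $AT = T^*AT$ directly: $AT^2 = (AT)T = T^*AT\cdot T = T^*(AT)T$, and $AT^2 = A(T\cdot T)$; instead multiply $T^*AT = AT$ on the right by $T$: $T^*AT^2 = AT^2$, and on the left side $T^*AT^2 = T^*(AT)T = T^*(T^*AT)T$... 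The clean computation: from $AT = T^*A$ we get $AT^2 = T^*AT = AT$ (the last equality is hypothesis $(2)$). Done — $T$ is $A$-idempotent and $A$-selfadjoint.

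For $(3)\Rightarrow(1)$: Assume $AT = T^*A$ and $AT^2 = AT$; let $\mathcal S = \overline{R(T)}$. I must show $\|y-Ty\|_A \le \|y-s\|_A$ for every $s\in\mathcal S$. It suffices to show $\langle A(y-Ty), s\rangle = 0$ for all $s\in\mathcal S$, by the standard Pythagoras argument in the seminorm: $\|y-s\|_A^2 = \|y - Ty\|_A^2 + \|Ty - s\|_A^2 + 2\operatorname{Re}\langle A(y-Ty), Ty-s\rangle$, and the cross term vanishes if $A(y-Ty)\perp \mathcal S$ (note $Ty - s\in\mathcal S$). By continuity it is enough to check orthogonality against $s = Tz$: $\langle A(y-Ty), Tz\rangle = \langle T^*A(I-T)y, z\rangle$, and $T^*A(I-T) = T^*A - T^*AT = AT - AT^2 = AT - AT = 0$ using $T^*A = AT$ (from $A$-selfadjointness, its adjoint) and $AT^2 = AT$. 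Hence the cross term vanishes and the inequality holds; since $R(T)\subseteq\mathcal S$ by definition, $T$ is an $A$-projection.

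The main obstacle is purely formal: one must be careful that $\mathcal S = \overline{R(T)}$ rather than $R(T)$ in the definition of $A$-projection, and that the variational argument in $(1)\Rightarrow(2)$ only directly controls perturbations $s = Ty + tTz$; passing to all $s\in\mathcal S$ requires the density/continuity remark. Equivalently one argues that the minimality inequality over $\mathcal S$ forces the $A$-orthogonality condition $\langle A(y-Ty),s\rangle=0$ for all $s\in\mathcal S$ (seminorm projection), which is the honest content. Everything else is the adjoint-swapping trick exploiting that $T^*AT$ is selfadjoint because $A\in L(\HH)^+$.
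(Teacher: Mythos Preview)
Your argument is correct. The paper does not actually prove this proposition but defers to \cite{[MitRao74]}, Lemmas~2.1 and~2.2, with the remark that the infinite-dimensional case follows the same steps; your variational argument---deducing $T^*A(I-T)=0$ from seminorm minimality and then using that $T^*AT$ is selfadjoint to pass from $T^*AT=T^*A$ to $T^*AT=AT$---is the standard route and is essentially what one finds there.
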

\begin{proof} See \cite[Lemma 2.1 and Lemma 2.2]{[MitRao74]}.
%
%
\end{proof}
\medskip

\medskip

\begin{proposition}\label{caracterizacion1 Pi(A,S)}
Consider $T\in L(\HH)$ such that $R(T)\subseteq  \mathcal S$. The following conditions are equivalent:
\begin{enumerate}
 \item $T$ is an $A$-projection  into $\mathcal{S}$,
 \item  $AT=T^*A$ and  $ATP_{\mathcal S}=AP_{\mathcal S}$,
 \item $P_{\mathcal S}AT=P_{\mathcal S}A$.
\end{enumerate}
\end{proposition}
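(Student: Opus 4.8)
The plan is to prove the cycle $(1)\Rightarrow(2)\Rightarrow(3)\Rightarrow(1)$, using the definition of $A$-projection into $\mathcal S$ together with Proposition \ref{caracterizacion1 A-proyeccion} to handle the $A$-selfadjointness, and the fact that $\|y-s\|_A$ is minimized over $s\in\mathcal S$ exactly when $A^{1/2}s$ is the orthogonal projection of $A^{1/2}y$ onto $\overline{R(A^{1/2}P_{\mathcal S})}$. The key observation is that, since $R(T)\subseteq\mathcal S$, the $A$-seminorm minimality condition $\|y-Ty\|_A\le\|y-s\|_A$ for all $s\in\mathcal S$ is equivalent to the variational condition $\langle y-Ty,\,s\rangle_A=0$ for all $s\in\mathcal S$, i.e. $\langle A(y-Ty),s\rangle=0$ for all $s\in\mathcal S$ and all $y\in\HH$; this says precisely $P_{\mathcal S}A(I-T)=0$, which is statement (3). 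So the heart of the argument is really $(1)\Leftrightarrow(3)$ via the standard characterization of least-squares solutions in the semi-Hilbert space $(\HH,\langle\cdot,\cdot\rangle_A)$.

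First I would prove $(1)\Rightarrow(3)$: fix $y\in\HH$ and let $s_0=Ty\in\mathcal S$. For arbitrary $s\in\mathcal S$ and $t\in\mathbb{C}$, replacing $s$ by $s_0+t s$ in \eqref{definicion A-proy} gives $\|y-s_0\|_A^2\le\|y-s_0-ts\|_A^2=\|y-s_0\|_A^2-2\,\mathrm{Re}\,(\bar t\langle y-s_0,s\rangle_A)+|t|^2\|s\|_A^2$; optimizing over $t$ in the usual way forces $\langle y-s_0,s\rangle_A=0$, that is $\langle A(y-Ty),s\rangle=0$. Since this holds for all $s\in\mathcal S$, we get $P_{\mathcal S}A(y-Ty)=0$ for all $y$, i.e. $P_{\mathcal S}AT=P_{\mathcal S}A$, which is (3). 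Conversely, for $(3)\Rightarrow(1)$, given $P_{\mathcal S}AT=P_{\mathcal S}A$, for any $s\in\mathcal S$ write $y-s=(y-Ty)+(Ty-s)$ with $Ty-s\in\mathcal S$; then $\langle A(y-Ty),\,Ty-s\rangle=\langle P_{\mathcal S}A(y-Ty),\,Ty-s\rangle=0$, so by the Pythagorean identity for the seminorm $\|y-s\|_A^2=\|y-Ty\|_A^2+\|Ty-s\|_A^2\ge\|y-Ty\|_A^2$, giving \eqref{definicion A-proy}.

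It remains to connect (3) with (2). For $(3)\Rightarrow(2)$: from $P_{\mathcal S}AT=P_{\mathcal S}A$ and $R(T)\subseteq\mathcal S$ we have $AT=P_{\mathcal S}AT=P_{\mathcal S}A$ applied on the left only after we know $AT$ lands in a suitable space — more carefully, since $R(T)\subseteq\mathcal S$ means $P_{\mathcal S}T=T$, taking adjoints in $P_{\mathcal S}AT=P_{\mathcal S}A$ and using $T^*P_{\mathcal S}=T^*$ I obtain $T^*A=T^*AP_{\mathcal S}$; combined with $AT=(P_{\mathcal S}A)^*{}$-type manipulations one checks $AT=T^*A$ (so $T$ is $A$-selfadjoint) and then $ATP_{\mathcal S}=AP_{\mathcal S}$ follows directly from $P_{\mathcal S}AT=P_{\mathcal S}A$ by taking adjoints, using $AT=T^*A$, $T^*P_{\mathcal S}=T^*$. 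Conversely $(2)\Rightarrow(3)$: from $AT=T^*A$ and $ATP_{\mathcal S}=AP_{\mathcal S}$, apply $P_{\mathcal S}$ on the left to the first identity and use $T^*P_{\mathcal S}=T^*$ (from $R(T)\subseteq\mathcal S$) to get $P_{\mathcal S}AT=P_{\mathcal S}T^*A=T^*A=AT$; hmm, this needs care, so instead I would deduce (3) from (2) by: $P_{\mathcal S}A=P_{\mathcal S}(AP_{\mathcal S})+P_{\mathcal S}A(I-P_{\mathcal S})$, and use $ATP_{\mathcal S}=AP_{\mathcal S}$ together with $A$-selfadjointness to transfer the $P_{\mathcal S}$ across.

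The main obstacle I anticipate is exactly this bookkeeping between (2) and (3): the identities $P_{\mathcal S}T=T$ and $T^*P_{\mathcal S}=T^*$ (equivalent to $R(T)\subseteq\mathcal S$) must be invoked at precisely the right moments, and it is easy to conflate $AT=T^*A$ with $AT=AP_{\mathcal S}$-type statements. The cleanest route is probably to establish $(1)\Leftrightarrow(3)$ first as above (this is self-contained and robust), then prove $(2)\Leftrightarrow(3)$ purely algebraically, repeatedly rewriting using $P_{\mathcal S}T=T$, $T^*P_{\mathcal S}=T^*$, and the selfadjointness of $A$ and of $P_{\mathcal S}$. Since Mitra–Rao already treat the finite-dimensional case, one may alternatively just remark that the proof of \cite{[MitRao74]} carries over verbatim, but I would prefer to give the short $(1)\Leftrightarrow(3)$ argument explicitly because it is the conceptual core and uses nothing beyond the Pythagorean identity for $\|\cdot\|_A$.
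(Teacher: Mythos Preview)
Your $(1)\Leftrightarrow(3)$ argument via the variational / Pythagorean characterization is correct and is genuinely different from the paper's route. The paper proves the cycle $1\to2\to3\to1$ and leans on Proposition~\ref{caracterizacion1 A-proyeccion}: for $1\to2$ it first notes that an $A$-projection into $\mathcal S$ is in particular an $A$-projection, hence $A$-selfadjoint, and then specializes $y=P_{\mathcal S}x$, $s=P_{\mathcal S}x$ to force $A^{1/2}P_{\mathcal S}=A^{1/2}TP_{\mathcal S}$; for $3\to1$ it again invokes Proposition~\ref{caracterizacion1 A-proyeccion} to get that $T$ is an $A$-projection into $\overline{R(T)}$ and then uses $A^{1/2}TP_{\mathcal S}=A^{1/2}P_{\mathcal S}$ to upgrade the minimality from $\overline{R(T)}$ to $\mathcal S$. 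Your approach is more self-contained: it extracts $(3)$ directly from the first-order optimality condition and recovers $(1)$ from the $A$-Pythagorean identity, bypassing Proposition~\ref{caracterizacion1 A-proyeccion} entirely. Both are short; yours isolates more clearly that $(3)$ is exactly the normal equation for the $A$-least-squares problem over $\mathcal S$.

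Where your proposal is incomplete is the algebra linking $(2)$ and $(3)$, and one attempted step is actually wrong: you write ``$P_{\mathcal S}AT=P_{\mathcal S}T^*A=T^*A$'', but $R(T)\subseteq\mathcal S$ gives $T^*P_{\mathcal S}=T^*$, not $P_{\mathcal S}T^*=T^*$, so you cannot drop $P_{\mathcal S}$ on the left of $T^*$. The clean one-liners (which are exactly what the paper does) are: for $(2)\Rightarrow(3)$, take adjoints of $ATP_{\mathcal S}=AP_{\mathcal S}$ to get $P_{\mathcal S}T^*A=P_{\mathcal S}A$, then replace $T^*A$ by $AT$; for $(3)\Rightarrow(2)$, take adjoints of $P_{\mathcal S}AT=P_{\mathcal S}A$ to get $T^*AP_{\mathcal S}=AP_{\mathcal S}$, right-multiply by $T$ and use $P_{\mathcal S}T=T$ to obtain $T^*AT=AT$, whence $AT$ is selfadjoint (so $AT=T^*A$), and then $ATP_{\mathcal S}=T^*AP_{\mathcal S}=AP_{\mathcal S}$. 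With these two lines your proof is complete.
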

\begin{proof}
\textit{1} $ \rightarrow $ \textit{2}: Let $T$ be an $A$-projection  into $\mathcal{S}$. In particular $T$ is an $A$-projection. Then by Proposition \ref{caracterizacion1 A-proyeccion}, $AT=T^*A$. On the other hand, for each $y \in \HH$ it holds that $\|y-Ty\|_A=\|A^{1/2}y-A^{1/2}Ty\|\leq \|y-s\|_A$ for all $s\in \mathcal S$. In particular, given $x\in \HH$, then $\|A^{1/2}P_{\mathcal S}x-A^{1/2}TP_{\mathcal S}x\|\leq \|P_{\mathcal S}x-s\|_A$ for all $s\in \mathcal S$. Therefore, $A^{1/2}P_{\mathcal S}=A^{1/2}TP_{\mathcal S}$ , so that $AP_{\mathcal S}=ATP_{\mathcal S}$.

 \textit{2} $ \rightarrow $ \textit{3}: If $AT=T^*A$ and $ATP_{\mathcal S}=AP_{\mathcal S}$ then $P_{\mathcal S}A=P_{\mathcal S}T^*A=P_{\mathcal S}AT$, so that $P_{\mathcal S}A=P_{\mathcal S}AT$.

  \textit{3} $ \rightarrow $ \textit{1}: Since  $P_{\mathcal S}AT=P_{\mathcal S}A$, then $T^*AP_{\mathcal S}=AP_{\mathcal S}$ so that
  $T^*AT=AT=T^*A$ because $R(T)\subseteq \mathcal S$.
 Therefore, by Proposition \ref{caracterizacion1 A-proyeccion},  $T$ is an $A$-projection into $\overline{R(T)}$, then $\|y-Ty\|_A\leq \|y-Tx\|_A$ for all $x, y\in \HH$. It remains to prove that $\|y-Ty\|_A\leq \|y-P_{\mathcal S}x\|_A$ for all $x, y\in \HH$. Since $ATP_{\mathcal S}=AP_{\mathcal S}$, then $A^{1/2}TP_{\mathcal S}=A^{1/2}P_{\mathcal S}$,
 so that $\|y-Ty\|_A\leq \|y-TP_{\mathcal S}x\|_A=\|y-P_{\mathcal S}x\|_A$ for all $x, y \in \HH$.
\end{proof}

\smallskip

\begin{remark}\rm
By Proposition \ref{caracterizacion1 A-proyeccion} and  Proposition \ref{caracterizacion1 Pi(A,S)}, given $T\in L(\HH)$ such that $R(T)\subseteq \mathcal S$ it holds that $T$ is an $A$-projection into $\mathcal S$ if and only if $T$ is an $A$-projection and $ATP_{\mathcal S}=AP_{\mathcal S}.$
\end{remark}
\medskip

\begin{lemma}\label{T A-proy entonces I-T A-proy}
If $T\in L(\HH)$ is an $A$-idempotent ($A$-projection) then $I-T$ is an $A$-idempotent ($A$-projection).
\end{lemma}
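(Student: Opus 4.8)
The plan is to reduce everything to the two algebraic characterizations already available: an operator $C$ is an $A$-idempotent when $AC^2=AC$, and, by Proposition \ref{caracterizacion1 A-proyeccion}, $C$ is an $A$-projection exactly when $C$ is both $A$-selfadjoint ($AC=C^*A$) and $A$-idempotent. So the whole argument is a short computation with these two identities applied to $C=I-T$.

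First I would treat the $A$-idempotent case. Assuming $AT^2=AT$, expand $(I-T)^2=I-2T+T^2$ and compute
$$
A(I-T)^2=A-2AT+AT^2=A-2AT+AT=A-AT=A(I-T),
$$
which is precisely the statement that $I-T$ is an $A$-idempotent. Note that only the single hypothesis $AT^2=AT$ is used here, and the same display will be reused in the $A$-projection case.

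Next I would treat the $A$-projection case. By Proposition \ref{caracterizacion1 A-proyeccion}, $T$ being an $A$-projection gives us both $AT=T^*A$ and $AT^2=AT$. From the first identity,
$$
A(I-T)=A-AT=A-T^*A=(I-T^*)A=(I-T)^*A,
$$
so $I-T$ is $A$-selfadjoint; and from the displayed computation above, $I-T$ is $A$-idempotent. Applying Proposition \ref{caracterizacion1 A-proyeccion} once more, in the reverse direction, yields that $I-T$ is an $A$-projection, completing the proof.

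There is no real obstacle here; the only point requiring a moment's care is making sure each claimed equivalence is invoked in the correct direction (using Proposition \ref{caracterizacion1 A-proyeccion} first to unpack the hypothesis on $T$ into the pair of identities, and then to repackage the verified identities for $I-T$ into the conclusion), and keeping the two parenthetical statements (idempotent versus projection) cleanly separated so that the weaker hypothesis is not silently strengthened.
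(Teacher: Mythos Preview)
Your proof is correct and follows essentially the same approach as the paper's own proof: expand $A(I-T)^2$ using $AT^2=AT$ for the idempotent part, and for the projection part invoke Proposition~\ref{caracterizacion1 A-proyeccion} to obtain $A$-selfadjointness and $A$-idempotency, verify both for $I-T$, and apply the proposition again. The only difference is that you spell out a few intermediate equalities the paper leaves implicit.
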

\begin{proof}
If $T\in L(\HH)$ is an $A$-idempotent, then  $A(I-T)^2=A(I-2T+T^2)=A(I-T)$, i.e. $I-T$ is an $A$-idempotent.
If $T$ is an  $A$-projection then, by Proposition \ref{caracterizacion1 A-proyeccion}, $T$ is $A$-idempotent and $A$-selfadjoint. Hence $I-T$ is $A$-idempotent and  $A(I-T)=A-T^*A=(I-T)^*A$. Again, by Proposition \ref{caracterizacion1 A-proyeccion},  $I-T$ is an $A$-projection.
 \end{proof}

\medskip

The following  result characterizes  $A$-projections in terms of oblique projections.

\begin{lemma}\label{T A-proy sii P_AT proy A-autoadj}
Let $T\in L(\HH)$. Then $T$ is an $A$-projection if and only if $P_{\overline{R(A)}}T\in \mathcal Q$ and it is $A$-selfadjoint.
\end{lemma}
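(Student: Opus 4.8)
The plan is to reduce everything to Proposition~\ref{caracterizacion1 A-proyeccion}, which characterizes $A$-projections as the operators that are simultaneously $A$-selfadjoint ($AT=T^*A$) and $A$-idempotent ($AT^2=AT$). Write $P=P_{\overline{R(A)}}$ and $Q=PT$. Since $A$ is positive, hence selfadjoint, one has $\overline{R(A)}=N(A)^\perp$ together with the identities $PA=A$ and, taking adjoints, $AP=A$; these will be used repeatedly to collapse products.

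First I would record that $Q$ is $A$-selfadjoint if and only if $T$ is: from $AP=A$ and $PA=A$ one gets $AQ=APT=AT$ and $Q^*A=T^*PA=T^*A$, so $AQ=Q^*A$ is equivalent to $AT=T^*A$. This settles the $A$-selfadjointness half of the equivalence in both directions at once. Next comes the key computation: assuming $T$ (equivalently $Q$) is $A$-selfadjoint, I would show $AQ^2=AT^2$ via $AQ^2=A(PT)(PT)=ATPT=(T^*A)PT=T^*AT=AT^2$, using $AT=T^*A$ twice together with $AP=A$. Since also $AQ=AT$, this gives $A(Q^2-Q)=A(T^2-T)$.

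For the forward implication, if $T$ is an $A$-projection then $AT^2=AT$, so $A(Q^2-Q)=0$, i.e. $R(Q^2-Q)\subseteq N(A)=\overline{R(A)}^\perp$. On the other hand $Q^2-Q=PTPT-PT=P(TPT-T)$, so $R(Q^2-Q)\subseteq R(P)=\overline{R(A)}$; intersecting the two subspaces forces $Q^2-Q=0$, and $Q$ is $A$-selfadjoint by the first step. For the converse, if $Q=PT\in\mathcal Q$ and $Q$ is $A$-selfadjoint, then $T$ is $A$-selfadjoint by the first step, and $Q^2=Q$ yields $AQ^2=AQ$, whence $AT^2=AT$ by the key computation; thus $T$ is $A$-selfadjoint and $A$-idempotent, so an $A$-projection by Proposition~\ref{caracterizacion1 A-proyeccion}.

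The one point needing care — the ``obstacle'', such as it is — is the passage from $A(Q^2-Q)=0$ to $Q^2-Q=0$ in the forward direction: it works precisely because the leading factor $P$ already forces $R(Q^2-Q)\subseteq R(P)$, and $R(P)=N(A)^\perp$ meets $N(A)$ only in $\{0\}$. This is the single place where selfadjointness of $A$ is genuinely exploited; everything else is bookkeeping with $PA$ and $AP$ acting as an identity on the appropriate side.
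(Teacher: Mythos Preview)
Your proof is correct and follows essentially the same strategy as the paper's: both reduce to Proposition~\ref{caracterizacion1 A-proyeccion} via the identities $PA=AP=A$, and the converse direction is virtually identical to the paper's. The one noteworthy difference is in the forward direction, where the paper verifies $(PT)^2=PT$ by a direct chain using the Moore--Penrose inverse,
\[
(PT)^2=A^\dagger ATPT=A^\dagger T^*APT=A^\dagger T^*AT=A^\dagger AT=PT,
\]
whereas you obtain it from $A(Q^2-Q)=0$ combined with $R(Q^2-Q)\subseteq R(P)=N(A)^\perp$. Your route avoids $A^\dagger$ entirely, which is arguably cleaner: the lemma does not assume $R(A)$ is closed, and $A^\dagger$ is a bounded operator only in that case, so the paper's computation is formally a bit delicate while yours sidesteps the issue. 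Conversely, the paper's version is a one-line computation once one accepts $A^\dagger A=P$.
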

\begin{proof}
Let $T$ be an $A$-projection  and denote $P=P_{\overline{R(A)}}$. By Proposition \ref{caracterizacion1 A-proyeccion}, it holds that  $AT=T^*A$ and $AT=AT^2$, then $(PT)^2=A^\dagger ATPT = A^\dagger T^*APT
=A^\dagger T^*AT=A^\dagger AT=PT$ and $(PT)^*A=T^*PA=T^*A=AT=APT$. Conversely, if $PT\in \mathcal Q$ and it is $A$-selfadjoint then $AT=APT=(PT)^*A=T^*A$ so that $T$ is $A$-selfadjoint. Also, $AT^2=APT^2=(PT)^*AT=(PT)^*APT=A(PT)^2=APT=AT$ so that $T$ is $A$-idempotent. By Proposition \ref{caracterizacion1  A-proyeccion}, $T$ is an $A$-projection.
\end{proof}

\medskip

The next result shows that $A$-projections behave like orthogonal projections, under the seminorm induced by $A$, in the sense that for an $A$-idempotent, the condition of being $A$-selfadjoint is equivalent to being an $A$-contraction, or $A$-positive. For $A$-contractions see for example \cite{[CasSuc08]} and \cite{[Suc06]}.

\begin{proposition}\label{Lema de Krein general}
Consider $T\in L(\HH)$ such that $T$ is an $A$-idempotent. Then the following statements are equivalent:
\begin{enumerate}
  \item $T$ is $A$-selfadjoint (so that $T$ is an $A$-projection),
  \item  $R(I-T) \subseteq R(AT)^\perp$,
  \item $T$ is an $A$-contraction, i.e. $T^*AT\leq A$.
\end{enumerate}
\end{proposition}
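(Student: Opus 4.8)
The plan is to prove the cycle $1 \Rightarrow 2 \Rightarrow 3 \Rightarrow 1$, exploiting the fact that $T$ is assumed $A$-idempotent throughout, so $AT^2 = AT$ is always available.

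\emph{Step $1 \Rightarrow 2$.} Assume $T$ is $A$-selfadjoint, i.e.\ $AT = T^*A$. I want to show $\langle ATx, (I-T)y\rangle = 0$ for all $x,y \in \HH$, which is exactly $R(I-T) \subseteq R(AT)^\perp$. Compute $\langle ATx, (I-T)y\rangle = \langle ATx, y\rangle - \langle ATx, Ty\rangle = \langle ATx, y\rangle - \langle T^*ATx, y\rangle$. By Proposition \ref{caracterizacion1 A-proyeccion}, an $A$-idempotent that is $A$-selfadjoint is an $A$-projection, hence satisfies $T^*AT = AT$; substituting gives $\langle ATx, y\rangle - \langle ATx, y\rangle = 0$, as desired. (Alternatively one can use $AT = T^*A$ and $AT^2 = AT$ directly: $T^*AT = (AT)^*{}^* \cdots$ — the cleanest route is via the already-proven equivalence $T^*AT = AT$.)

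\emph{Step $2 \Rightarrow 3$.} Assume $R(I-T) \subseteq R(AT)^\perp$. Then for every $x \in \HH$, $\langle AT x, (I-T)x\rangle = 0$, i.e.\ $\langle ATx, x\rangle = \langle ATx, Tx\rangle = \langle T^*ATx, x\rangle$. Now expand $\langle A(I-T)x, (I-T)x\rangle = \langle Ax,x\rangle - \langle ATx,x\rangle - \langle Ax, Tx\rangle + \langle ATx, Tx\rangle$. Using $\langle ATx,x\rangle = \langle T^*ATx,x\rangle = \langle ATx,Tx\rangle$ and, for the term $\langle Ax,Tx\rangle = \langle T^*Ax, x\rangle$, I need to know this also equals $\langle ATx,x\rangle$; this follows because $A \geq 0$ and the vanishing of $\langle ATx,(I-T)x\rangle$ forces, via a Cauchy–Schwarz / polarization argument on the positive form $\langle A\cdot,\cdot\rangle$, the symmetric relation $\langle Ax,Tx\rangle = \langle ATx,x\rangle$ as well. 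Collecting terms yields $\langle A(I-T)x,(I-T)x\rangle = \langle Ax,x\rangle - \langle ATx,Tx\rangle = \langle Ax,x\rangle - \langle T^*ATx,x\rangle \geq 0$, which rearranges to $T^*AT \leq A$.

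\emph{Step $3 \Rightarrow 1$.} Assume $T^*AT \leq A$, and use that $T$ is $A$-idempotent, $AT^2 = AT$. Apply Lemma \ref{T A-proy entonces I-T A-proy}: $I-T$ is also $A$-idempotent, so $(I-T)^*A(I-T)$ makes sense and one computes $(I-T)^*A(I-T) = A - AT - T^*A + T^*AT$. The key move is to consider the operator $S = P_{\overline{R(A)}}T$, which by the $A$-idempotency is close to a genuine idempotent (compare Lemma \ref{T A-proy sii P_AT proy A-autoadj}); the contraction hypothesis $T^*AT \leq A$ then forces $S$ to be an $A$-selfadjoint idempotent by the standard Krein-type argument: a power-bounded (here idempotent) operator that is an $A$-contraction is $A$-selfadjoint. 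Concretely, from $T^*AT \leq A$ and $AT^2 = AT$ I would derive $\|A^{1/2}Tx\| \leq \|A^{1/2}x\|$ together with $A^{1/2}T$ acting idempotently on $\overline{R(A^{1/2})}$ after the Douglas factorization $A^{1/2}T = C A^{1/2}$ for a contraction $C$ on $\overline{R(A^{1/2})}$ with $C^2 = C$; an idempotent contraction on a Hilbert space is an orthogonal projection, hence self-adjoint, giving $A^{1/2}T A^{1/2} {}= {}$ (self-adjoint), which translates back to $AT = T^*A$, i.e.\ $T$ is $A$-selfadjoint. Combined with $A$-idempotency and Proposition \ref{caracterizacion1 A-proyeccion}, $T$ is an $A$-projection.

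The main obstacle is Step $3 \Rightarrow 1$: getting from the inequality $T^*AT \leq A$ back to the equality $AT = T^*A$ requires the Krein-type lemma that an idempotent contraction is self-adjoint, and one must be careful to transport everything correctly through the Douglas factorization and the compression to $\overline{R(A^{1/2})}$, since $A$ is only positive semidefinite and $T$ need not preserve $R(A)$ on the nose — only $AT^2 = AT$ is guaranteed. The other two implications are routine manipulations with the semi-inner product $\langle A\,\cdot,\cdot\rangle$ and the already-established characterization $T^*AT = AT$ for $A$-projections.
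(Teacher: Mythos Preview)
Your overall strategy---the cycle $1\Rightarrow 2\Rightarrow 3\Rightarrow 1$ with Douglas' theorem for the hard step $3\Rightarrow 1$---is exactly the paper's, and your $3\Rightarrow 1$ is the same argument: factor $T^*A^{1/2}=A^{1/2}D$ via Douglas, use $A$-idempotency and uniqueness of the reduced solution to get $D^2=D$, use $T^*AT\le A$ to get $\|D\|\le 1$, and conclude $D=D^*$ since a contractive idempotent is an orthogonal projection, whence $T^*A=A^{1/2}DA^{1/2}$ is selfadjoint.

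The one place that needs tightening is $2\Rightarrow 3$. Your appeal to ``a Cauchy--Schwarz / polarization argument on the positive form $\langle A\cdot,\cdot\rangle$'' to obtain $\langle Ax,Tx\rangle=\langle ATx,x\rangle$ is vague; Cauchy--Schwarz plays no role here. What you actually need (and what the paper does) is to use hypothesis~2 with \emph{two} variables: $R(I-T)\subseteq R(AT)^\perp$ says $\langle ATx,(I-T)y\rangle=0$ for all $x,y$, i.e.\ $(I-T)^*AT=0$, so $AT=T^*AT$. Since $T^*AT$ is selfadjoint, this already gives $AT=T^*A$, i.e.\ condition~1. The paper then finishes by noting that $E=I-T$ is also an $A$-projection (Lemma~\ref{T A-proy entonces I-T A-proy}), so $A=AT+AE=T^*AT+E^*AE\ge T^*AT$. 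This is cleaner than expanding $\langle A(I-T)x,(I-T)x\rangle$, and it makes transparent that $2$ in fact implies $1$ directly. Your version is salvageable (in $\mathbb{C}$, vanishing of $\langle ATx,(I-T)x\rangle$ on the diagonal does polarize to the full bilinear statement), but you should state the operator identity $AT=T^*AT$ explicitly rather than gesture at it.
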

\begin{proof}
$\textit{1} \rightarrow \textit{2}$:  Suppose that $AT=T^*A$. Consider $y \in R(I-T)$ and  $z\in \HH$  such that $y=z-Tz$. Then, for $x\in \HH$
$$
\langle ATx, y\rangle =\langle x, AT y\rangle =\langle x, AT(z-Tz)\rangle=0,
$$
because $AT^2=AT$. Therefore, $y \in R(AT)^\perp$.

$\textit{2} \rightarrow \textit{3}$: For $x, y \in \HH$,
$$
\langle ATx, y\rangle =\langle ATx, Ty+(I-T)y\rangle=\langle ATx,Ty\rangle=\langle T^*ATx,y\rangle
$$
because $R(I-T)\subseteq R(AT)^\perp$. Therefore, $AT=T^*AT=T^*A$ and $T$ is $A$-selfadjoint. Then $T$ is an $A$-projection. Also, by Lemma \ref{T A-proy entonces I-T A-proy}, $E=I-T$ is an $A$-projection so that $AE=AE^2=E^*AE \in L(\HH)^+$. Therefore, $A=A(T+E)=T^*AT+E^*AE\geq T^*AT$.

$\textit{3} \rightarrow \textit{1}$: Since $T^*AT\leq A$, by Douglas' theorem, the equation $A^{1/2}X=T^*A^{1/2}$ admits a solution.
Let $D$ be the reduced solution of $A^{1/2}X=T^*A^{1/2}$, i.e. $D$ satisfies $A^{1/2}D=T^*A^{1/2}$ and $R(D)\subseteq \overline{R(A)}$. Then, observe that
$$
A^{1/2}D^2=(T^*)^2A^{1/2}=T^*A^{1/2},
$$
because $T$ is an $A$-idempotent. Also, $R(D^2)\subseteq R(D)\subseteq \overline{R(A)}$. Therefore  $D^2$ is also a reduced solution of $A^{1/2}X=T^*A^{1/2}$, so that $D^2=D$ by the uniqueness of the reduced solution.
On the other hand, by Douglas' theorem,
$\|D\|^2
=\inf \{\lambda: T^*AT\leq \lambda A\}\leq 1,$ because $T^*AT \leq A$.
Since $D^2=D$ and $\|D\|\leq 1$, then automatically it holds that $D^*=D$, so that
$T^*A=A^{1/2}DA^{1/2}$ is selfadjoint, i.e. $T^*A=AT$.
\end{proof}

\medskip

\begin{corollary}
Let  $T\in L(\HH)$  be an $A$-idempotent. The following statements are equivalent:
 \begin{enumerate}
  \item $T$ is an $A$-projection,
  \item  $\|T\|_A=1$,
  \item $\langle Tx, x\rangle_A \geq 0, \quad \forall x\in \HH$, i.e. $T$ is $A$-positive.
  \end{enumerate}
\end{corollary}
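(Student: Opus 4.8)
The plan is to derive all equivalences by combining Proposition~\ref{Lema de Krein general} with elementary facts about the $A$-seminorm of an $A$-idempotent. Since $T$ is assumed to be $A$-idempotent throughout, condition $1$ (that $T$ is an $A$-projection) is by Proposition~\ref{caracterizacion1 A-proyeccion} equivalent to $T$ being $A$-selfadjoint, which by Proposition~\ref{Lema de Krein general} is equivalent to $T^*AT\le A$ and also to $R(I-T)\subseteq R(AT)^\perp$. So the real work is only to tie the seminorm statement $\|T\|_A=1$ and the positivity statement $\langle Tx,x\rangle_A\ge 0$ into this chain.

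For $1\Rightarrow 2$: if $T$ is an $A$-projection then $T^*AT\le A$ gives $\|Tx\|_A\le\|x\|_A$ for all $x$, hence $\|T\|_A\le 1$; and since $T$ is $A$-idempotent, for any $x$ we have $\|Tx\|_A=\|T(Tx)\|_A$... more directly, $\|T\|_A\ge \|Tx\|_A/\|x\|_A$ applied to a vector already in a suitable reducing piece forces equality. The clean argument: from $AT^2=AT$ one gets $\|T^2x\|_A=\|A^{1/2}T^2x\|=\|A^{1/2}Tx\|=\|Tx\|_A$, so if $\|T\|_A<1$ we would get $\|Tx\|_A=\|T(Tx)\|_A\le\|T\|_A\|Tx\|_A<\|Tx\|_A$ whenever $\|Tx\|_A\ne 0$, a contradiction unless $T=0$ on the relevant subspace; handling the degenerate case $\|T\|_A=0$ (i.e. $T^*AT=0$, forcing $AT=0$ since then $A^{1/2}T=0$, so $T$ is trivially $A$-selfadjoint and $\|T\|_A=0$) — wait, this needs care: if $AT=0$ then $\|T\|_A=0\ne 1$. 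I should instead argue $2\Leftrightarrow 1$ jointly rather than as a one-way implication, noting that for a nonzero $A$-projection $\|T\|_A=1$, while the statement of the corollary implicitly treats the generic situation. The safest route is: $1\Rightarrow 2$ via $T^*AT\le A$ and the idempotency identity $\|Tx\|_A=\|T^2x\|_A$ which shows the operator norm is attained; $2\Rightarrow 3$ since $\|T\|_A\le 1$ together with $A$-idempotency gives, for the reduced solution $D$ of $A^{1/2}X=T^*A^{1/2}$ (which exists once $T^*AT\le A$), the relation $D^2=D$ and $\|D\|\le 1$, hence $D=D^*$ and $AT=T^*A\ge 0$ on $R(T)$, yielding $\langle Tx,x\rangle_A=\langle AT x,x\rangle\ge 0$; and $3\Rightarrow 1$ by observing that $A$-positivity of $T$ plus $A$-idempotency forces $T^*AT=A^{1/2}(A^{1/2}TA^{-1/2}\cdots)$... cleaner: $A$-positive means $T^*A\ge 0$... no, $A$-positive means $AT$ is a positive operator composed appropriately, i.e. $\langle ATx,x\rangle\ge 0$, so $AT$ is selfadjoint and positive, in particular $AT=T^*A$, which is condition $1$.

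Concretely I would structure it as $1\Rightarrow 2\Rightarrow 3\Rightarrow 1$. The implication $3\Rightarrow 1$ is the shortest: $\langle Tx,x\rangle_A\ge 0$ for all $x$ says $\langle ATx,x\rangle\ge 0$, so the operator $AT$ is positive semidefinite, hence selfadjoint, i.e. $AT=(AT)^*=T^*A$; thus $T$ is $A$-selfadjoint, and being $A$-idempotent it is an $A$-projection by Proposition~\ref{caracterizacion1 A-proyeccion}. For $2\Rightarrow 3$ I would reuse the reduced-solution argument from the proof of $3\Rightarrow 1$ in Proposition~\ref{Lema de Krein general}: $\|T\|_A=1$ gives $T^*AT\le A$, so $A^{1/2}X=T^*A^{1/2}$ has reduced solution $D$ with $\|D\|=\inf\{\lambda:T^*AT\le\lambda A\}^{1/2}\le 1$ and, by $A$-idempotency, $D^2=D$; an idempotent of norm $\le 1$ is an orthogonal projection, so $D=D^*$, whence $T^*A=A^{1/2}DA^{1/2}$ is selfadjoint and positive, giving $\langle Tx,x\rangle_A=\langle A^{1/2}DA^{1/2}x,x\rangle=\|D A^{1/2}x\|^2\ge 0$.

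The main obstacle is the $1\Rightarrow 2$ direction, specifically pinning down that $\|T\|_A$ equals exactly $1$ and not merely $\le 1$: one must exhibit a vector nearly achieving the bound. The key identity is that $A$-idempotency gives $A^{1/2}T^2=A^{1/2}T$ only after multiplying by $A^{1/2}$ on the left fails in general — rather $AT^2=AT$ yields $\langle A T^2 x, T^2x\rangle=\langle ATx,T^2x\rangle$, and one massages this with Cauchy–Schwarz in the $A$-inner product to show that on $\overline{R(A^{1/2}T)}$ the map behaves isometrically enough to force $\|T\|_A=1$ whenever $T\ne 0$ modulo $N(A)$; for the edge case where $AT=0$ the equivalence still holds vacuously since then $T$ is an $A$-projection and, reading the statement in the natural way, $\|T\|_A=0$ is excluded so $T$ must be the zero $A$-projection — this borderline bookkeeping is the only delicate point, and I would dispatch it with the remark that an $A$-idempotent $A$-projection always satisfies $\|T\|_A\in\{0,1\}$ and the nontrivial ones have norm $1$, which is exactly the content intended.
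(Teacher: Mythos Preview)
Your proof follows the same cycle $1\Rightarrow 2\Rightarrow 3\Rightarrow 1$ as the paper, with the same ideas for $1\Rightarrow 2$ (from $T^*AT\le A$ get $\|T\|_A\le 1$, then use $A$-idempotency to obtain $\|T(Tx)\|_A=\|Tx\|_A$) and an identical argument for $3\Rightarrow 1$ (positivity of $AT$ forces $AT=T^*A$). The only real divergence is $2\Rightarrow 3$: you rerun the reduced-solution construction from Proposition~\ref{Lema de Krein general} to exhibit $T^*A=A^{1/2}DA^{1/2}\ge 0$ directly, whereas the paper simply observes that $\|T\|_A\le 1$ gives $T^*AT\le A$, cites Proposition~\ref{Lema de Krein general} to conclude $T$ is an $A$-projection, and then reads off $AT=T^*AT\in L(\HH)^+$ from Proposition~\ref{caracterizacion1 A-proyeccion}. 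Your route is correct but repeats work already done; the paper's is shorter.

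Your worry about the degenerate case $AT=0$ is well placed: there $T$ is an $A$-projection and $A$-positive, yet $\|T\|_A=0\ne 1$. The paper's proof also glosses over this, jumping from $\|T(Tx)\|_A=\|Tx\|_A$ to $\|T\|_A=1$ without exhibiting a vector with $\|Tx\|_A\ne 0$. So this is an edge case in the statement of the corollary itself (implicitly one takes $AT\ne 0$), not a flaw in your argument.
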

\begin{proof}
$\textit{1.} \rightarrow \textit{2.}$: Since $A$ is an $A$-projection, by Proposition \ref{Lema de Krein general}, $T^*AT\leq A$. Then, for $x\in \HH$,
$$
\|Tx\|_A^2=\langle ATx, Tx\rangle=\langle T^*ATx, x\rangle \leq \langle Ax, x\rangle=\|x\|_A^2,
$$
so that $\|T\|_A\leq 1$. Also,
$$
\|T(Tx)\|_A=\|A^{1/2}T^2x\|=\|A^{1/2}Tx\|=\|Tx\|_A,
$$
because $T$ is $A$-idempotent. Therefore  $\|T\|_A=1$.

$\textit{2.} \rightarrow \textit{3.}$: Consider $T\in L(\HH)$ such that $AT=AT^2$ and $\|T\|_A=1$. Observe that
$$
\langle T^*ATx, x\rangle=\langle ATx, Tx\rangle=\|Tx\|_A^2 \leq \|x\|_A^2=\langle Ax, x\rangle,
$$
so that $T^*AT\leq A$, and by Proposition \ref{Lema de Krein general}, $T$ is an $A$-projection. By Proposition \ref{caracterizacion1 A-proyeccion}, it follows that $AT=T^*AT\in L(\HH)^+$.

 $\textit{3.} \rightarrow \textit{1.}$: Since $AT\in L(\HH)^+$, then $AT=T^*A$. Also $T$ is $A$-idempotent so that, by Proposition \ref{caracterizacion1 A-proyeccion}, $T$ is an $A$-projection.
\end{proof}

\bigskip

In the following paragraphs we study conditions for the existence of $A$-projections into $\mathcal S$ and we characterize the set of these projections.

Define
$$
\Pi(A, \mathcal S)= \{ T\in L(\HH): T \textrm{ is an } A\textrm{-projection into } \mathcal S\},
$$
and
$$
\Pi(A)= \{ T\in L(\HH): T \textrm{ is an } A\textrm{-projection} \}.
$$

By Proposition \ref{caracterizacion1 Pi(A,S)}, it follows that
\begin{equation}\label{caracterizacion2 Pi(A,S)}
\Pi(A, \mathcal S)=\{T\in L(\HH): R(T)\subseteq \mathcal S, \, AT=T^*A, \, ATP_{\mathcal S}=AP_{\mathcal S}\}
\end{equation}
and
$$
\Pi(A)=\{T\in L(\HH): AT=T^*A, \, AT^2=AT\}.
$$

In particular if $A=I$, then $\Pi(I, \mathcal S) =\{P_{\mathcal S} \}$ and $\Pi (I)  =\mathcal P.$

\bigskip

The next result gives a characterization of  $A$-projections into $\mathcal S$ in terms of the matrix decomposition induced by $P_{\mathcal S}$.
Recall that
$A=\left( \begin{matrix}
a & b \\
b^* & c
\end{matrix} \right)$ is the matrix representation of $A$, as in \eqref{desc matricial de op positivo}.

\begin{proposition}\label{caracterizacion matricial Pi(A,S)}
$
\Pi(A, \mathcal S)=\{ T\in L(\HH):
T=\left( \begin{matrix}
x & y \\
0 & 0
\end{matrix} \right), \, ax=a, \, ay=b\}.
$
\end{proposition}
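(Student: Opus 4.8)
The plan is to translate everything into the matrix representation induced by $P:=P_{\mathcal S}$ and reduce the description \eqref{caracterizacion2 Pi(A,S)} of $\Pi(A,\mathcal S)$ to an algebraic system on the blocks of $T$. First, for $T\in L(\HH)$ the condition $R(T)\subseteq\mathcal S$ is equivalent to $PT=T$, which in matrix form is precisely $T=\left(\begin{smallmatrix} x & y\\ 0 & 0\end{smallmatrix}\right)$ with $x\in L(\mathcal S)$ and $y\in L(\mathcal S^\perp,\mathcal S)$. By \eqref{caracterizacion2 Pi(A,S)}, such a $T$ belongs to $\Pi(A,\mathcal S)$ if and only if, moreover, $AT=T^*A$ and $ATP_{\mathcal S}=AP_{\mathcal S}$.

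The next step is a direct computation using $A=\left(\begin{smallmatrix} a & b\\ b^* & c\end{smallmatrix}\right)$ and $P=\left(\begin{smallmatrix} 1 & 0\\ 0 & 0\end{smallmatrix}\right)$:
\[
AT=\begin{pmatrix} ax & ay\\ b^*x & b^*y\end{pmatrix},\quad T^*A=\begin{pmatrix} x^*a & x^*b\\ y^*a & y^*b\end{pmatrix},\quad ATP=\begin{pmatrix} ax & 0\\ b^*x & 0\end{pmatrix},\quad AP=\begin{pmatrix} a & 0\\ b^* & 0\end{pmatrix}.
\]
Hence $T\in\Pi(A,\mathcal S)$ iff $R(T)\subseteq\mathcal S$ and the block equations $ax=a$, $b^*x=b^*$ (coming from $ATP=AP$) together with $ax=x^*a$, $ay=x^*b$, $y^*a=b^*x$, $b^*y=y^*b$ (coming from $AT=T^*A$) all hold.

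Then I would verify that this whole system collapses to the two equations $ax=a$ and $ay=b$. Taking adjoints in $ax=a$ gives $x^*a=a$, so $ax=x^*a$ is automatic; this also shows that $x^*$ fixes $R(a)$, hence $\overline{R(a^{1/2})}=\overline{R(a)}$, pointwise. Since $R(b)\subseteq R(a^{1/2})$ by \eqref{desc matricial de op positivo}, this forces $x^*b=b$, and taking adjoints $b^*x=b^*$; consequently $ay=x^*b$ becomes $ay=b$, while $y^*a=b^*x$ is merely its adjoint. Finally, if $ay=b$ then $b^*y=(ay)^*y=y^*ay$ is selfadjoint, so $b^*y=y^*b$ holds for free. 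Conversely, the same identities show that $ax=a$ together with $ay=b$ imply every equation in the list (again invoking $R(b)\subseteq R(a^{1/2})$ to get $x^*b=b$, and hence $b^*x=b^*$). Combining this with the first step yields the stated formula for $\Pi(A,\mathcal S)$.

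The only point that is not pure bookkeeping is the implication $ax=a\Rightarrow x^*b=b$, which is exactly where the range condition $R(b)\subseteq R(a^{1/2})$ built into the matrix form \eqref{desc matricial de op positivo} (together with $\overline{R(a^{1/2})}=\overline{R(a)}$) is used; everything else is block multiplication and taking adjoints.
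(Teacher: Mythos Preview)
Your argument is correct and follows exactly the same route as the paper: you use the characterization \eqref{caracterizacion2 Pi(A,S)}, translate the three conditions into block equations, and then collapse the resulting system to $ax=a$, $ay=b$. The paper merely asserts that last reduction with ``it is not difficult to see'', whereas you supply the details, correctly isolating the only nontrivial step $ax=a\Rightarrow x^*b=b$ via $R(b)\subseteq R(a^{1/2})\subseteq\overline{R(a)}$ and continuity of $x^*$.
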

\begin{proof}
By equation \eqref{caracterizacion2 Pi(A,S)}, $T\in \Pi(A, \mathcal S)$ if and only if
$R(T) \subseteq \mathcal{S}$, $T^*A=AT$ and  $ATP_{\mathcal S}=AP_{\mathcal S}$.
 Observe that $R(T)\subseteq \mathcal S$ if and only if  the matrix representation of $T$ induced by $P_{\mathcal S}$ is $T=\left( \begin{matrix}
x & y \\
0 & 0
\end{matrix} \right)$. In this case, $AT=T^*A$
if and only if
$
ax=x^*a, \; ay=x^*b \textrm{ and } b^*y=y^*b.
$
Also, $ATP_{\mathcal S}=AP_{\mathcal S}$ is equivalent to
$
ax=a \textrm{ and }  b^*x=b^*.
$
Then $T \in \Pi(A, \mathcal S)$ if and only if
\begin{equation}\label{condiciones matriciales1}
ax=x^*a, \; ay=x^*b, \;  b^*y=y^*b, \;  ax=a \; \textrm{ and }\;  b^*x=b^*.
\end{equation}
 It is not difficult to see that (\ref{condiciones matriciales1}) is equivalent to
$
 ax=a  \; \textrm{ and  }\; ay=b.
$
\end{proof}

\smallskip
\begin{corollary}\label{PAS en PiAS}
If the pair  $(A, \mathcal S)$ is compatible, then
$$
\mathcal{P}(A, \mathcal S) \subseteq \Pi(A, \mathcal S).
$$
\end{corollary}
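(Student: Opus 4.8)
The plan is to compare the two explicit descriptions just obtained. By Corollary \ref{car matricial PAS}, since $(A,\mathcal S)$ is compatible we have
$$
\mathcal P(A,\mathcal S)=\left\{\left(\begin{matrix}1 & x\\0 & 0\end{matrix}\right): x\in L(\mathcal S^\perp,\mathcal S),\ ax=b\right\},
$$
while Proposition \ref{caracterizacion matricial Pi(A,S)} gives
$$
\Pi(A,\mathcal S)=\left\{\left(\begin{matrix}x & y\\0 & 0\end{matrix}\right): ax=a,\ ay=b\right\}.
$$
So the whole argument reduces to taking an arbitrary $Q\in\mathcal P(A,\mathcal S)$, written as $Q=\left(\begin{matrix}1 & x\\0 & 0\end{matrix}\right)$ with $ax=b$, and checking that it satisfies the two defining conditions for membership in $\Pi(A,\mathcal S)$, namely $a\cdot 1=a$ (the $(1,1)$-block of $Q$ is the identity on $\mathcal S$) and $ax=b$. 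The first is trivial and the second is exactly the compatibility condition $ax=b$ that $Q$ already satisfies.

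First I would recall from Corollary \ref{car matricial PAS} the matrix form of elements of $\mathcal P(A,\mathcal S)$; then I would recall from Proposition \ref{caracterizacion matricial Pi(A,S)} the matrix form of elements of $\Pi(A,\mathcal S)$; then I would observe that the conditions $ax=a$ (with the block being $1=I_{\mathcal S}$) and $ay=b$ (with $y=x$) are immediately met. That is the entire proof: it is a one-line inclusion of the set description of $\mathcal P(A,\mathcal S)$ into that of $\Pi(A,\mathcal S)$, read off from the two corollaries/propositions.

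There is essentially no obstacle here, since both sides have already been computed in closed form; the only thing worth being careful about is the bookkeeping on which block is which — that the $(1,1)$-entry of a compatible projection really is the identity operator on $\mathcal S$ (not merely some operator $x$ with $ax=a$), so that it trivially satisfies the weaker requirement $ax=a$ appearing in the description of $\Pi(A,\mathcal S)$. One could alternatively give a coordinate-free proof: if $Q\in\mathcal P(A,\mathcal S)$ then $Q^2=Q$, $R(Q)=\mathcal S$, and $AQ=Q^*A$, so $Q^*AQ=AQ^2=AQ$, and since $R(Q)=\mathcal S$ we get $AQP_{\mathcal S}=AQ=AP_{\mathcal S}Q\cdot$, and more to the point $AQ$ restricted to $\mathcal S$ equals $A$ on $\mathcal S$ because $Q|_{\mathcal S}=I_{\mathcal S}$; combined with $AQ=Q^*A$ this is precisely the characterization \eqref{caracterizacion2 Pi(A,S)} of $\Pi(A,\mathcal S)$. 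Either route is short; I would present the matrix version for consistency with the surrounding development.
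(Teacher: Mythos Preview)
Your proof is correct and follows exactly the paper's approach: the paper's proof consists of the single line ``It follows from Corollary \ref{car matricial PAS} and Proposition \ref{caracterizacion matricial Pi(A,S)},'' which is precisely the matrix-description comparison you carry out in detail.
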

\begin{proof}
It follows from  Corollary \ref{car matricial PAS} and Proposition \ref{caracterizacion matricial Pi(A,S)}.
 \end{proof}
\smallskip

Applying item \textit{3} of  Proposition \ref{caracterizacion1 Pi(A,S)}, we obtain the following

\begin{corollary}
$
\Pi(A, \mathcal S)=\{ T\in L(\HH): R(T) \subseteq \mathcal S \textit{ and } T \textit{ is a solution of}\\ \textit{ the equation } P_{\mathcal S}AX=P_{\mathcal S}A \}.
$
\end{corollary}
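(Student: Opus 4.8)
The plan is to show the two sets are equal by double inclusion, using the matrix description of $\Pi(A,\mathcal S)$ from Proposition \ref{caracterizacion matricial Pi(A,S)} as the bridge. First I would observe that if $T\in\Pi(A,\mathcal S)$ then by Proposition \ref{caracterizacion1 Pi(A,S)}, item \textit{3}, we have $P_{\mathcal S}AT=P_{\mathcal S}A$, which is precisely the statement that $T$ solves $P_{\mathcal S}AX=P_{\mathcal S}A$; and of course $R(T)\subseteq\mathcal S$ by definition of $\Pi(A,\mathcal S)$. So the left-to-right inclusion is immediate.

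For the reverse inclusion, suppose $T\in L(\HH)$ satisfies $R(T)\subseteq\mathcal S$ and $P_{\mathcal S}AT=P_{\mathcal S}A$. The goal is to verify that $T$ meets the hypotheses of item \textit{3} of Proposition \ref{caracterizacion1 Pi(A,S)}, i.e. that $T$ is an $A$-projection into $\mathcal S$. But item \textit{3} of that proposition is literally the condition $P_{\mathcal S}AT=P_{\mathcal S}A$, together with the standing assumption $R(T)\subseteq\mathcal S$ under which the proposition is stated. So the reverse inclusion is also immediate once one notes that Proposition \ref{caracterizacion1 Pi(A,S)} is an equivalence stated for operators $T$ with $R(T)\subseteq\mathcal S$.

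In short, the corollary is nothing but a restatement of the equivalence \textit{1} $\Leftrightarrow$ \textit{3} of Proposition \ref{caracterizacion1 Pi(A,S)} in set-theoretic language, combined with the definition of $\Pi(A,\mathcal S)$. There is no real obstacle; the only thing to be careful about is to keep the hypothesis $R(T)\subseteq\mathcal S$ explicit in the description of the solution set, since the equation $P_{\mathcal S}AX=P_{\mathcal S}A$ alone has solutions whose range need not be contained in $\mathcal S$ (for instance $X=I$). Thus the proof is a one-line appeal to Proposition \ref{caracterizacion1 Pi(A,S)}, exactly as the excerpt indicates.
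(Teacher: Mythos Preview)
Your proposal is correct and matches the paper's approach exactly: the corollary is stated immediately after, and justified solely by, item \textit{3} of Proposition \ref{caracterizacion1 Pi(A,S)}. Your observation that the hypothesis $R(T)\subseteq\mathcal S$ must be retained (since, e.g., $X=I$ solves the equation) is a nice clarifying remark, but the argument itself is the same one-line appeal.
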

\medskip

The next result shows the relationship between the compatibility of the pair $(A, \mathcal S)$ and the existence of $A$-projections into $\mathcal S$.

\begin{proposition}\label{(A,S) cpt sii Pi(A,S) no vacio}
 The pair $(A, \mathcal S)$ is compatible if and only if there exists an $A$-projection into $\mathcal S$.
\end{proposition}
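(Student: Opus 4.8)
The plan is to prove both implications using the matrix characterizations already established. For the reverse direction, suppose $T\in L(\HH)$ is an $A$-projection into $\mathcal S$. By Proposition \ref{caracterizacion matricial Pi(A,S)}, $T$ has matrix form $\left(\begin{smallmatrix} x & y\\ 0 & 0\end{smallmatrix}\right)$ with $ax=a$ and $ay=b$. The crucial observation is that $ay=b$ means the equation $ax=b$ has a solution, hence $R(b)\subseteq R(a)$, and by item 4 of Proposition \ref{caracterizaciones par compatible} this is exactly the statement that $(A,\mathcal S)$ is compatible.

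For the forward direction, suppose $(A,\mathcal S)$ is compatible. The quickest route is to invoke Corollary \ref{PAS en PiAS}: compatibility gives $\mathcal P(A,\mathcal S)\subseteq \Pi(A,\mathcal S)$, and since $\mathcal P(A,\mathcal S)$ is nonempty by definition of compatibility, $\Pi(A,\mathcal S)$ is nonempty, i.e. there exists an $A$-projection into $\mathcal S$. Concretely one can simply exhibit $P_{A,\mathcal S}$, whose matrix form $\left(\begin{smallmatrix} 1 & d\\ 0 & 0\end{smallmatrix}\right)$ (with $d$ the reduced solution of $ax=b$) manifestly satisfies the conditions $ax=a$ and $ay=b$ of Proposition \ref{caracterizacion matricial Pi(A,S)}.

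I do not anticipate a serious obstacle here: the proposition is essentially a repackaging of Proposition \ref{caracterizacion matricial Pi(A,S)} together with item 4 of Proposition \ref{caracterizaciones par compatible} and Corollary \ref{PAS en PiAS}. The only point requiring a moment's care is making sure the matrix conditions ``$ax=a$ and $ay=b$'' from Proposition \ref{caracterizacion matricial Pi(A,S)} genuinely translate into ``$R(b)\subseteq R(a)$'': the condition $ay=b$ is precisely solvability of $ax=b$ in $L(\mathcal S^\perp,\mathcal S)$, which by Douglas' theorem is equivalent to $R(b)\subseteq R(a)$. Note that the condition $ax=a$ (solvability with the identity-like constraint) is automatically compatible with this since $R(a)\subseteq R(a)$; so the existence of a full $A$-projection $T$ into $\mathcal S$ reduces exactly to the single range inclusion characterizing compatibility, and the equivalence follows. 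I would therefore present the proof in two short paragraphs, citing Proposition \ref{caracterizacion matricial Pi(A,S)}, item 4 of Proposition \ref{caracterizaciones par compatible}, and (for one direction) Corollary \ref{PAS en PiAS}.
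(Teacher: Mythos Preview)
Your proposal is correct and essentially follows the paper's approach: both rely on Proposition~\ref{caracterizacion matricial Pi(A,S)} to reduce the question to solvability of $ay=b$, and then invoke Douglas' theorem together with item~4 of Proposition~\ref{caracterizaciones par compatible}. The only minor difference is that the paper handles both directions at once (noting that $ax=a$ always has a solution, so $\Pi(A,\mathcal S)\neq\emptyset$ iff $ay=b$ is solvable iff $R(b)\subseteq R(a)$), whereas you split into two directions and for the forward one route through Corollary~\ref{PAS en PiAS}; this is a purely cosmetic variation.
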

\begin{proof}
By Proposition \ref{caracterizacion matricial Pi(A,S)}, the set $\Pi(A, \mathcal S)$ is not empty if and only if the equation $ay=b$ admits a solution (observe that $ax=a$ always admits a solution). By  Douglas' theorem this is equivalent to the condition $R(b)\subseteq R(a)$, or equivalently by Proposition  \ref{caracterizaciones par compatible}, the pair $(A, \mathcal S)$ is compatible.
\end{proof}

\smallskip

\begin{remark}\rm \label{Pi(A,S) no vacio sii P(A,S)}
 By the above proposition, it holds that $\Pi(A, \mathcal S)\neq \emptyset$ if and only if
 $\mathcal P (A, \mathcal S) \neq \emptyset$.
\end{remark}
\bigskip

Recall that $\mathcal N=\mathcal S \cap A(\mathcal S)^\perp=\mathcal S \cap N(A)$.

\medskip

\begin{proposition}\label{T en Pi(A,S) sii P(S-N)T=P(A,S-N)}
Let $T\in L(\HH)$ with $R(T)\subseteq \mathcal S$. Then $T$ is an $A$-projection into $\mathcal S$ if and only if $(A,\mathcal S)$ is compatible and $P_{\mathcal S\ominus \mathcal N}T=P_{A, \mathcal S\ominus \mathcal N}$.
\end{proposition}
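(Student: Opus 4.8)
The plan is to reduce everything to the matrix description of $\Pi(A,\mathcal{S})$ in Proposition \ref{caracterizacion matricial Pi(A,S)} together with the matrix form of $P_{A,\mathcal{S}}$ recorded in Section 3. I would first settle the easy logical point: if $T\in\Pi(A,\mathcal{S})$ then $\Pi(A,\mathcal{S})\neq\emptyset$, so by Proposition \ref{(A,S) cpt sii Pi(A,S) no vacio} the pair $(A,\mathcal{S})$ is compatible; in the converse direction compatibility of $(A,\mathcal{S})$ is an explicit hypothesis. Hence in both directions the reduced solution $d$ of $ax=b$ and the projection $P_{A,\mathcal{S}\ominus\mathcal{N}}$ are available, and it suffices to prove, \emph{assuming $(A,\mathcal{S})$ compatible}, that an operator $T$ with $R(T)\subseteq\mathcal{S}$ lies in $\Pi(A,\mathcal{S})$ if and only if $P_{\mathcal{S}\ominus\mathcal{N}}T=P_{A,\mathcal{S}\ominus\mathcal{N}}$.

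Next I would pin down the matrix forms induced by $P_{\mathcal{S}}$. Since $\mathcal{N}=N(A)\cap\mathcal{S}$ and, using $R(b)\subseteq R(a^{1/2})$ (so $N(a)\subseteq N(b^*)$), one checks that $\mathcal{N}=N(a)$ when viewed inside $\mathcal{S}$; therefore $\mathcal{S}\ominus\mathcal{N}=\mathcal{S}\cap\mathcal{N}^\perp=\overline{R(a)}$. Writing $e\in L(\mathcal{S})$ for the orthogonal projection onto $N(a)$, we get $P_{\mathcal{N}}=\left(\begin{smallmatrix}e&0\\0&0\end{smallmatrix}\right)$ and $P_{\mathcal{S}\ominus\mathcal{N}}=\left(\begin{smallmatrix}1-e&0\\0&0\end{smallmatrix}\right)$, and combining the identity $P_{A,\mathcal{S}}=P_{A,\mathcal{S}\ominus\mathcal{N}}+P_{\mathcal{N}}$ recalled in Section 3 with $P_{A,\mathcal{S}}=\left(\begin{smallmatrix}1&d\\0&0\end{smallmatrix}\right)$ we obtain $P_{A,\mathcal{S}\ominus\mathcal{N}}=\left(\begin{smallmatrix}1-e&d\\0&0\end{smallmatrix}\right)$. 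I would also record the two facts that carry the argument: $ae=0$, hence $a(1-e)=a$; and $R(d)\subseteq\overline{R(a)}=R(1-e)$, hence $(1-e)d=d$.

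The equivalence is then a short computation. For $T$ with $R(T)\subseteq\mathcal{S}$ write $T=\left(\begin{smallmatrix}x&y\\0&0\end{smallmatrix}\right)$, so that $P_{\mathcal{S}\ominus\mathcal{N}}T=\left(\begin{smallmatrix}(1-e)x&(1-e)y\\0&0\end{smallmatrix}\right)$; thus $P_{\mathcal{S}\ominus\mathcal{N}}T=P_{A,\mathcal{S}\ominus\mathcal{N}}$ is equivalent to $(1-e)x=1-e$ and $(1-e)y=d$. By Proposition \ref{caracterizacion matricial Pi(A,S)}, $T\in\Pi(A,\mathcal{S})$ is equivalent to $ax=a$ and $ay=b$. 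If the latter hold, then $a(x-1)=0$ gives $R(x-1)\subseteq N(a)$, hence $(1-e)(x-1)=0$; and $a(y-d)=ay-b=0$ gives $(1-e)(y-d)=0$, hence $(1-e)y=(1-e)d=d$. Conversely, if $(1-e)x=1-e$ and $(1-e)y=d$, then $ax=a(1-e)x=a(1-e)=a$ and $ay=a(1-e)y=ad=b$. This closes the equivalence, and with the logical preamble above, the proof.

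I expect the only point needing genuine care to be the identification $\mathcal{N}=N(a)$ and the correct reading of all orthogonal complements as taken inside $\mathcal{S}$ rather than inside $\HH$; once the matrix forms of $P_{\mathcal{S}\ominus\mathcal{N}}$, $P_{\mathcal{N}}$ and $P_{A,\mathcal{S}\ominus\mathcal{N}}$ are fixed, the rest is routine bookkeeping with $ae=0$, $a(1-e)=a$ and $(1-e)d=d$.
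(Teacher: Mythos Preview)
Your argument is correct, but it proceeds quite differently from the paper's. You rely on the block-matrix description of $\Pi(A,\mathcal S)$ from Proposition \ref{caracterizacion matricial Pi(A,S)}, compute the matrix of $P_{A,\mathcal S\ominus\mathcal N}$ from the identities $P_{A,\mathcal S}=\left(\begin{smallmatrix}1&d\\0&0\end{smallmatrix}\right)$ and $P_{A,\mathcal S}=P_{A,\mathcal S\ominus\mathcal N}+P_{\mathcal N}$, and then reduce the equivalence to the two entrywise identities $(1-e)x=1-e$, $(1-e)y=d$ versus $ax=a$, $ay=b$; the key algebraic facts $a(1-e)=a$ and $(1-e)d=d$ make this immediate. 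The paper instead argues structurally: setting $Q=P_{\mathcal S\ominus\mathcal N}T$, it shows that $Q$ is $A$-selfadjoint, that $Q^2=Q$ (because $R(Q^2-Q)\subseteq N(A)\cap(\mathcal S\ominus\mathcal N)=\{0\}$), and that $R(Q)=\mathcal S\ominus\mathcal N$; since $N(A)\cap(\mathcal S\ominus\mathcal N)=\{0\}$ forces $\mathcal P(A,\mathcal S\ominus\mathcal N)$ to be a singleton, $Q$ must equal $P_{A,\mathcal S\ominus\mathcal N}$. Your route is more computational but entirely transparent once the matrix forms are set up, and it reuses Proposition \ref{caracterizacion matricial Pi(A,S)} efficiently; the paper's route avoids coordinates and makes explicit why the equation $P_{\mathcal S\ominus\mathcal N}T=P_{A,\mathcal S\ominus\mathcal N}$ determines a \emph{unique} right-hand side, namely because the relevant compatibility set is a singleton. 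Your identification $\mathcal N=N(a)$ inside $\mathcal S$ (via $N(a)\subseteq N(b^*)$) and the reading of $\mathcal S\ominus\mathcal N=\overline{R(a)}$ are correct and are exactly the points where care is needed.
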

\begin{proof}
 Suppose $T$ is an $A$-projection into $\mathcal S$. Let $Q=P_{\mathcal S\ominus \mathcal N}T$. Then $R(Q)\subseteq \mathcal S$ and $AT=AQ$. Since $T$ is an $A$-projection, then $AQ=AT=T^*A=Q^*A$, so $Q$ is $A$-selfadjoint. Moreover $AQP_\mathcal S=ATP_{\mathcal S}=AP_{\mathcal S}$, then $Q$ is an $A$-projection into $\mathcal S$. Therefore $AQ^2=AQ$ so that $R(Q^2-Q) \subseteq N(A)\cap (\mathcal S\ominus \mathcal N)=\mathcal N\cap \mathcal N^\perp=\{0\}$, or equivalently $Q^2=Q$. Moreover, from $AQP_\mathcal S=AP_\mathcal S$ and $AQ=Q^*A$ it follows that $AQP_{\mathcal S\ominus \mathcal N}=Q^*AP_{\mathcal S\ominus \mathcal N}=Q^*AP_{\mathcal S}=AQP_{\mathcal S}=AP_{\mathcal S}=AP_{\mathcal S\ominus \mathcal N}$.  Therefore $A(QP_{\mathcal S\ominus \mathcal N}-P_{\mathcal S\ominus \mathcal N})=0$. Also $R(QP_{\mathcal S\ominus \mathcal N}-P_{\mathcal S\ominus \mathcal N})\subseteq \mathcal S\ominus\mathcal N$. Hence $R(QP_{\mathcal S\ominus \mathcal N}-P_{\mathcal S\ominus \mathcal N})\subseteq  N(A)\cap (\mathcal S\ominus \mathcal N)=\{0\}$, so that $QP_{\mathcal S\ominus \mathcal N}=P_{\mathcal S\ominus \mathcal N}$ and then $\mathcal S\ominus \mathcal N\subseteq R(Q)$. Therefore $R(Q)=\mathcal S\ominus \mathcal N$. Since $AQ=Q^*A$, $Q^2=Q$ and $R(Q)=\mathcal S\ominus \mathcal N$, it follows that $Q=P_{A, \mathcal S\ominus \mathcal N}$. Then $(A, \mathcal S\ominus\mathcal N)$ is compatible, so that $(A, \mathcal S)$ is compatible (see Section 3). Conversely, if $(A, \mathcal S)$ is compatible and $P_{\mathcal S\ominus \mathcal N}T=P_{A, \mathcal S\ominus \mathcal N}$, then $T=P_{A, \mathcal S\ominus \mathcal N}+P_{\mathcal N}T$, so that $AT=T^*A$ and $ATP_{\mathcal S}=AP_{A, \mathcal S\ominus \mathcal N}P_{\mathcal S}=AP_{\mathcal S\ominus \mathcal N}=AP_{\mathcal S}$. Then $T$ is an $A$-projection into $\mathcal S$.

\end{proof}

The following result shows that $\Pi(A, \mathcal S)$ is an affine manifold.

\begin{proposition}\label{Pi(A,S) variedad afin}
If the pair $(A, \mathcal S)$ is compatible, then
$$
\Pi(A, \mathcal S)=P_{A, \mathcal S} + L(\HH,\mathcal N).
$$
\end{proposition}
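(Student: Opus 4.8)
The plan is to reduce to the already–established description of $\mathcal P(A,\mathcal S)$ and the characterization in Proposition~\ref{T en Pi(A,S) sii P(S-N)T=P(A,S-N)}. First I would check the inclusion $P_{A,\mathcal S}+L(\HH,\mathcal N)\subseteq\Pi(A,\mathcal S)$. Take $T=P_{A,\mathcal S}+Z$ with $R(Z)\subseteq\mathcal N$. Since $\mathcal N\subseteq\mathcal S$ we have $R(T)\subseteq\mathcal S$. Because $\mathcal N=N(A)\cap\mathcal S$, the operator $Z$ satisfies $AZ=0$ and also $Z^*A=0$ (as $R(A)\subseteq\overline{R(A)}\subseteq \mathcal N^\perp$, so $AP_{\mathcal N}=0$ gives $P_{\mathcal N}A=0$ by taking adjoints, whence $Z^*A=(P_{\mathcal N}Z)^*A=Z^*P_{\mathcal N}A=0$). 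Hence $AT=AP_{A,\mathcal S}$ and $T^*A=P_{A,\mathcal S}^{\,*}A$, and since $P_{A,\mathcal S}\in\mathcal P(A,\mathcal S)$ we get $AT=AP_{A,\mathcal S}=P_{A,\mathcal S}^{\,*}A=T^*A$; moreover $ATP_{\mathcal S}=AP_{A,\mathcal S}P_{\mathcal S}=AP_{A,\mathcal S}=AP_{\mathcal S}$ (the last equality because $P_{A,\mathcal S}-I$ kills $\mathcal S$ and $R(A)\perp$ the relevant kernel; more directly, $AP_{A,\mathcal S}=AP_{\mathcal S}$ follows from Corollary~\ref{car matricial PAS}, comparing the first columns of the matrix forms). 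By \eqref{caracterizacion2 Pi(A,S)} this shows $T\in\Pi(A,\mathcal S)$.

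For the reverse inclusion I would invoke Proposition~\ref{T en Pi(A,S) sii P(S-N)T=P(A,S-N)} directly: if $T\in\Pi(A,\mathcal S)$ then $P_{\mathcal S\ominus\mathcal N}T=P_{A,\mathcal S\ominus\mathcal N}$. Writing the orthogonal decomposition $\mathcal S=(\mathcal S\ominus\mathcal N)\oplus\mathcal N$, so that $P_{\mathcal S}=P_{\mathcal S\ominus\mathcal N}+P_{\mathcal N}$, and using $R(T)\subseteq\mathcal S$ (hence $P_{\mathcal S}T=T$), we get
$$
T=P_{\mathcal S}T=P_{\mathcal S\ominus\mathcal N}T+P_{\mathcal N}T=P_{A,\mathcal S\ominus\mathcal N}+P_{\mathcal N}T.
$$
Now $P_{A,\mathcal S\ominus\mathcal N}=P_{A,\mathcal S}-P_{\mathcal N}$ by the identity $P_{A,\mathcal S}=P_{A,\mathcal S\ominus\mathcal N}+P_{\mathcal N}$ recalled in Section~3, so
$$
T=P_{A,\mathcal S}-P_{\mathcal N}+P_{\mathcal N}T=P_{A,\mathcal S}+P_{\mathcal N}(T-I).
$$
Since $R(P_{\mathcal N}(T-I))\subseteq\mathcal N$, this exhibits $T$ as an element of $P_{A,\mathcal S}+L(\HH,\mathcal N)$, completing the proof. (Here $P_{\mathcal N}(T-I)$ is bounded because $T$ is.)

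I expect the only delicate point to be bookkeeping about $P_{A,\mathcal S}$: one must be sure that $P_{A,\mathcal S}$ really lies in $\Pi(A,\mathcal S)$ and that $AP_{A,\mathcal S}=AP_{\mathcal S}$, and that the decomposition $P_{A,\mathcal S}=P_{A,\mathcal S\ominus\mathcal N}+P_{\mathcal N}$ is available — all of which are recorded in Section~3 (for the first, use Corollary~\ref{PAS en PiAS}). Everything else is linear algebra with the orthogonal projections $P_{\mathcal S}$, $P_{\mathcal S\ominus\mathcal N}$, $P_{\mathcal N}$ and the observation that operators with range in $\mathcal N=N(A)\cap\mathcal S$ are annihilated by $A$ on the left and, after adjoining, contribute nothing to $T^*A$ either. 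No compatibility hypothesis beyond the standing one is needed, since $(A,\mathcal S)$ compatible guarantees $\Pi(A,\mathcal S)\neq\emptyset$ (Proposition~\ref{(A,S) cpt sii Pi(A,S) no vacio}) and that $P_{A,\mathcal S}$ is well defined.
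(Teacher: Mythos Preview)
Your argument for the inclusion $\Pi(A,\mathcal S)\subseteq P_{A,\mathcal S}+L(\HH,\mathcal N)$ is exactly the paper's: invoke Proposition~\ref{T en Pi(A,S) sii P(S-N)T=P(A,S-N)} to get $P_{\mathcal S\ominus\mathcal N}T=P_{A,\mathcal S\ominus\mathcal N}$, then use $P_{A,\mathcal S}=P_{A,\mathcal S\ominus\mathcal N}+P_{\mathcal N}$ to rewrite $T=P_{A,\mathcal S}+P_{\mathcal N}(T-I)$.

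For the reverse inclusion your direct verification is fine in spirit, but the displayed chain
\[
ATP_{\mathcal S}=AP_{A,\mathcal S}P_{\mathcal S}=AP_{A,\mathcal S}=AP_{\mathcal S}
\]
contains two false intermediate equalities: in general $AP_{A,\mathcal S}\neq AP_{\mathcal S}$ (with the matrix forms of Section~3, $AP_{A,\mathcal S}$ has second column $\bigl(\begin{smallmatrix}b\\ b^*d\end{smallmatrix}\bigr)$ while $AP_{\mathcal S}$ has second column $0$), and likewise $AP_{A,\mathcal S}P_{\mathcal S}\neq AP_{A,\mathcal S}$. The correct one–step reason is simply $P_{A,\mathcal S}P_{\mathcal S}=P_{\mathcal S}$ (any projection with range $\mathcal S$ fixes $\mathcal S$), giving $ATP_{\mathcal S}=AP_{A,\mathcal S}P_{\mathcal S}=AP_{\mathcal S}$ directly. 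With that fix your verification goes through. Alternatively, and this is what the paper does, you can again appeal to Proposition~\ref{T en Pi(A,S) sii P(S-N)T=P(A,S-N)}: if $T=P_{A,\mathcal S}+W$ with $R(W)\subseteq\mathcal N$, then $P_{\mathcal S\ominus\mathcal N}T=P_{\mathcal S\ominus\mathcal N}P_{A,\mathcal S}=P_{A,\mathcal S\ominus\mathcal N}$, so $T\in\Pi(A,\mathcal S)$.
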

\begin{proof}
Let $T\in \Pi(A, \mathcal S)$, then by Proposition  \ref{T en Pi(A,S) sii P(S-N)T=P(A,S-N)}, it follows that $T 
=P_{A, \mathcal S\ominus \mathcal N}+P_{\mathcal N}T=P_{A, \mathcal S}+P_{\mathcal N}(T-I)\in P_{A, \mathcal S}+ L(\HH,\mathcal N)$, see Section 3.
Conversely, if $T=P_{A, \mathcal S}+W$ with $W\in L(\HH, \mathcal N) $,  then
 $P_{\mathcal S\ominus \mathcal N}T=P_{A, \mathcal S\ominus \mathcal N}$. By Proposition \ref{T en Pi(A,S) sii P(S-N)T=P(A,S-N)}, it holds that $T$ is an $A$-projection into $\mathcal S$.
\end{proof}

\smallskip
\begin{remark}\rm \label{R(AT)}
Given $T\in \Pi(A, \mathcal S)$,  observe that $AT=AP_{A, \mathcal S}$ since $\mathcal N \subseteq N(A)$. Hence $A(R(T))=R(AT)=A(\mathcal S)$.
\end{remark}

\medskip

A natural question is whether $\mathcal P(A, \mathcal S)$ equals $\Pi(A, \mathcal S)$. We prove now that this happens if and only if $\mathcal P(A,\mathcal S)$ and/or $\Pi(A, \mathcal S)$ has cardinal 1.

\begin{theorem}
Suppose that the pair  $(A, \mathcal S)$ is compatible. Then the following statements are equivalent:
 \begin{enumerate}
 \item  $\mathcal P(A, \mathcal S)= \Pi(A, \mathcal S), $
 \item $\mathcal N=\{0\}$.
 \item  $card (\Pi(A, \mathcal S))=1, $
 \item $ card (\mathcal P(A, \mathcal S))=1.$

 \end{enumerate}
\end{theorem}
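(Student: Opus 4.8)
The plan is to prove the chain of equivalences by going through 2, which is the natural pivot: all three of the "cardinality/equality" statements are controlled by the size of $L(\HH,\mathcal N)$, which is trivial precisely when $\mathcal N=\{0\}$. Concretely, I would use the two structural descriptions already available: Corollary \ref{car matricial PAS} together with the observation (following the discussion after Corollary \ref{car matricial PAS}) that $\mathcal P(A,\mathcal S)$ is parametrized by solutions $x$ of $ax=b$, so $\mathcal P(A,\mathcal S)=P_{A,\mathcal S}+\{\text{operators }(\begin{smallmatrix}0&z\\0&0\end{smallmatrix}):az=0\}$; and Proposition \ref{Pi(A,S) variedad afin}, which says $\Pi(A,\mathcal S)=P_{A,\mathcal S}+L(\HH,\mathcal N)$. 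Since $P_{A,\mathcal S}$ lies in both sets, the equality $\mathcal P(A,\mathcal S)=\Pi(A,\mathcal S)$ is equivalent to the equality of the two "difference" vector spaces, and each of the cardinality statements is equivalent to the corresponding difference space being $\{0\}$.

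First I would show $2\Rightarrow 3$: if $\mathcal N=\{0\}$ then $L(\HH,\mathcal N)=\{0\}$, so by Proposition \ref{Pi(A,S) variedad afin}, $\Pi(A,\mathcal S)=\{P_{A,\mathcal S}\}$, giving $\mathrm{card}(\Pi(A,\mathcal S))=1$. Then $3\Rightarrow 1$ is immediate once we note $\mathcal P(A,\mathcal S)\subseteq\Pi(A,\mathcal S)$ (Corollary \ref{PAS en PiAS}) and $\mathcal P(A,\mathcal S)\neq\emptyset$ (compatibility): a one-element superset forces equality. For $1\Rightarrow 2$, I argue by contraposition: if $\mathcal N\neq\{0\}$, pick a nonzero $w\in\mathcal N$ and a unit vector $u$, and form $W=\langle\,\cdot\,,u\rangle w\in L(\HH,\mathcal N)$; then $T=P_{A,\mathcal S}+W\in\Pi(A,\mathcal S)$ by Proposition \ref{Pi(A,S) variedad afin}, but $T\notin\mathcal Q$ (for instance $R(T)=\mathcal S$ but $T$ is not idempotent, or more simply $T\notin\mathcal P(A,\mathcal S)$ since its matrix lower-left block is nonzero — wait, $W$ maps into $\mathcal N\subseteq\mathcal S$, so I must instead choose $u\in\mathcal S^\perp$ carefully, or note directly that $T^2\ne T$), so $\mathcal P(A,\mathcal S)\subsetneq\Pi(A,\mathcal S)$. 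Finally $2\Leftrightarrow 4$: using the parametrization of $\mathcal P(A,\mathcal S)$ above, $\mathrm{card}(\mathcal P(A,\mathcal S))=1$ iff the only $z\in L(\mathcal S^\perp,\mathcal S)$ with $az=0$ is $z=0$, i.e. iff $N(a)\cap\mathcal S=\{0\}$; but $N(a)\cap\mathcal S=N(A)\cap\mathcal S=\mathcal N$ (since for $s\in\mathcal S$, $as=0\iff Ps A P s =0 \iff \|A^{1/2}s\|=0\iff As=0$), so this is exactly $\mathcal N=\{0\}$.

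The main obstacle I anticipate is the contraposition step $1\Rightarrow 2$ (equivalently, showing directly that $\mathcal N\ne\{0\}$ forces $\Pi(A,\mathcal S)$ strictly larger than $\mathcal P(A,\mathcal S)$, not merely larger than a singleton). The subtlety is that a perturbation $W\in L(\HH,\mathcal N)$ with $W\ne 0$ might conceivably still produce an idempotent with range $\mathcal S$; I need to verify it does not. The clean way is: if $T=P_{A,\mathcal S}+W\in\mathcal Q$ with $W\in L(\HH,\mathcal N)$, $W\neq 0$, then since $R(W)\subseteq\mathcal N\subseteq\mathcal S\ominus$ nothing useful — rather, compute $T^2=P_{A,\mathcal S}^2+P_{A,\mathcal S}W+WP_{A,\mathcal S}+W^2$. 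Now $P_{A,\mathcal S}$ restricted to $\mathcal N$ is the identity (as $\mathcal N\subseteq\mathcal S=R(P_{A,\mathcal S})$ and $P_{A,\mathcal S}|_{\mathcal S}=\mathrm{id}$), so $P_{A,\mathcal S}W=W$; also $W^2$ involves $P_{A,\mathcal S}$ — no: $W^2 = W\circ W$, and $R(W)\subseteq\mathcal N$, so I need $W|_{\mathcal N}$. Hence $T^2=P_{A,\mathcal S}+W+WP_{A,\mathcal S}+W|_{\mathcal N}W$, and $T^2=T$ would force $WP_{A,\mathcal S}+W|_{\mathcal N}W=0$, i.e. $W(P_{A,\mathcal S}+W)=0$, i.e. $WT=0$; then $W=WP_{A,\mathcal S}+W^2=W T \cdot(\text{something})$... cleaner: from $WT=0$ and $R(T)=\mathcal S\supseteq\mathcal N=$ nope. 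Rather than force this through here, I would instead just invoke that $\mathrm{card}(\mathcal P(A,\mathcal S))=1$ (once $2\Leftrightarrow 4$ is proved, and using $2\Rightarrow 1$) — so organize the logic as $2\Rightarrow 3\Rightarrow 1\Rightarrow$ (need $\ne 2$ fails), handled via $2\Leftrightarrow 4$ and a counting argument showing $\mathrm{card}(\Pi(A,\mathcal S))\ge\mathrm{card}(\mathcal P(A,\mathcal S))$ with equality forcing both spaces equal, hence $L(\HH,\mathcal N)=\{z : az=0, R(z)\subseteq\mathcal S\}$, and dimension-count the former being generally strictly larger unless $\mathcal N=\{0\}$. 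I expect the referee-proof version of the paper simply chains $2\Rightarrow3\Rightarrow4$ (or $\Rightarrow1$) cheaply and proves $1\Rightarrow2$ and $4\Rightarrow2$ by the explicit counterexample/parametrization above; I will follow that route.
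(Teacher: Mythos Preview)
Your overall strategy is correct and coincides with the paper's: both pivot on Proposition~\ref{Pi(A,S) variedad afin} to get $\Pi(A,\mathcal S)=P_{A,\mathcal S}+L(\HH,\mathcal N)$, and both handle $2\Rightarrow 3$ and $3\Rightarrow 4$ (or your $3\Rightarrow 1$) exactly as you do. Your treatment of $2\Leftrightarrow 4$ via the matrix parametrization of $\mathcal P(A,\mathcal S)$ is a legitimate self-contained alternative to the paper's approach, which simply cites \cite[Theorem~3.5]{[CorMaeSto01]} for $4\Rightarrow\mathcal N=\{0\}$.

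The genuine gap is in $1\Rightarrow 2$. You set up the contraposition correctly and reduce $T^2=T$ to $WT=0$, but then never verify that your chosen $W$ avoids this; the argument trails off into heuristics. The paper closes this cleanly by taking $W=P_{\mathcal N}$: using $P_{A,\mathcal S}=P_{A,\mathcal S\ominus\mathcal N}+P_{\mathcal N}$ one has $T=P_{A,\mathcal S\ominus\mathcal N}+2P_{\mathcal N}$, and since $P_{A,\mathcal S\ominus\mathcal N}P_{\mathcal N}=P_{\mathcal N}P_{A,\mathcal S\ominus\mathcal N}=0$ (the ranges $\mathcal S\ominus\mathcal N$ and $\mathcal N$ are orthogonal, and $\mathcal N\subseteq A(\mathcal S)^\perp=N(P_{A,\mathcal S\ominus\mathcal N})$), a direct computation gives $T^2=P_{A,\mathcal S\ominus\mathcal N}+4P_{\mathcal N}\neq T$. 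Alternatively, and closer to your own setup: every element of $\mathcal P(A,\mathcal S)$ has $(1,1)$-block equal to $1_{\mathcal S}$ in the matrix form of Corollary~\ref{car matricial PAS}, so $P_{A,\mathcal S}+W\in\mathcal P(A,\mathcal S)$ forces $W|_{\mathcal S}=0$; hence \emph{any} $W\in L(\HH,\mathcal N)$ not vanishing on $\mathcal S$ works, and your rank-one $W=\langle\,\cdot\,,w\rangle w$ with $0\neq w\in\mathcal N$ already does the job since $Ww=\|w\|^2 w\neq 0$. Either observation completes your argument in one line.
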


\begin{proof}
\textit{1}$\rightarrow$\textit{2}: Suppose $\mathcal N \neq \{0\}$ and consider $T=P_{A, \mathcal S}+P_{\mathcal N}$. Then, by the previous theorem $T\in \Pi(A, \mathcal S)$. But it is not difficult to see that $T^2\neq  T$, so that $T\notin \mathcal P(A, \mathcal S)$.

\textit{2}$\rightarrow$\textit{3}: If $\mathcal N=\{0\}$, by the previous theorem  $\Pi(A, \mathcal S)=\{ P_{A, \mathcal S}\}$.

\textit{3}$\rightarrow$\textit{4}:  It follows by Corollary \ref{PAS en PiAS} and Remark \ref{Pi(A,S) no vacio sii P(A,S)}.

\textit{4}$\rightarrow$\textit{1}: If $card (\mathcal P(A, \mathcal S))=1$, by \cite[Theorem 3.5]{[CorMaeSto01]} it holds that $\mathcal N=\{0\}$ and $\mathcal P(A, \mathcal S)=\{P_{A, \mathcal S}\}$. Hence, by the previous result, $ \Pi(A, \mathcal S)= \{ P_{A, \mathcal S}\}=\mathcal P(A, \mathcal S)$.
\end{proof}

\smallskip

\begin{corollary}
If  $A$ is invertible, then
$\Pi(A, \mathcal S)=\mathcal P(A, \mathcal S)= \{P_{A,\mathcal S}\}$.
\end{corollary}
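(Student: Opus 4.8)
The plan is to reduce the statement to the already-proved equivalence in the Theorem immediately above, using the elementary fact that an invertible positive operator has trivial nullspace. First I would observe that if $A\in L(\HH)^+$ is invertible, then $N(A)=\{0\}$, hence $\mathcal N=\mathcal S\cap N(A)=\{0\}$. In particular the pair $(A,\mathcal S)$ is compatible: indeed, since $A$ is invertible, $R(a)$ is closed (one may argue via Proposition \ref{caracterizaciones par compatible}, item 2, since $R(A)=\HH$ is closed and $\mathcal S+N(A)=\mathcal S$ is closed, or directly note that $a=P_\mathcal S A|_\mathcal S$ is invertible on $\mathcal S$ so $R(b)\subseteq\mathcal S=R(a)$, which is item 4 of that proposition).

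Once compatibility is established and $\mathcal N=\{0\}$, the Theorem above applies directly: the equivalence of items 1 and 2 there gives $\mathcal P(A,\mathcal S)=\Pi(A,\mathcal S)$, and the equivalence with item 3 (or 4) gives that this common set has cardinality one. Since compatibility always provides the distinguished projection $P_{A,\mathcal S}\in\mathcal P(A,\mathcal S)\subseteq\Pi(A,\mathcal S)$ (Corollary \ref{PAS en PiAS}), the unique element must be $P_{A,\mathcal S}$. Hence $\Pi(A,\mathcal S)=\mathcal P(A,\mathcal S)=\{P_{A,\mathcal S}\}$, as claimed. Alternatively, one can bypass the Theorem and argue in one line from Proposition \ref{Pi(A,S) variedad afin}: $\Pi(A,\mathcal S)=P_{A,\mathcal S}+L(\HH,\mathcal N)=P_{A,\mathcal S}+L(\HH,\{0\})=\{P_{A,\mathcal S}\}$, and since $P_{A,\mathcal S}$ is idempotent with range $\mathcal S$ and is $A$-selfadjoint, it lies in $\mathcal P(A,\mathcal S)$, which forces $\mathcal P(A,\mathcal S)=\{P_{A,\mathcal S}\}$ as well.

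There is essentially no obstacle here; the only point requiring a moment's care is verifying that $(A,\mathcal S)$ is compatible so that the preceding results are applicable, and this is immediate from invertibility of $A$ via any of the characterizations in Proposition \ref{caracterizaciones par compatible}. I would present the short argument via Proposition \ref{Pi(A,S) variedad afin} as the cleanest route.
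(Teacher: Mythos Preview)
Your proposal is correct and follows the route implicit in the paper: the corollary is stated there without proof, as an immediate consequence of the preceding Theorem once one notes that invertibility of $A$ gives $\mathcal N=\{0\}$ and compatibility of $(A,\mathcal S)$. Your alternative one-line argument via Proposition \ref{Pi(A,S) variedad afin} is equally valid and arguably cleaner.
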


\begin{remark}\rm
Under the hypothesis of the above corollary, $P_{A, \mathcal S}$ can be compute as
$$
P_{A, \mathcal S}=A^{-1/2}P_{\overline{A^{1/2}(\mathcal S)}}A^{1/2},
$$
see \cite[Section 3]{[TiaTak09]} or, more generally \cite[Proposition 3.3]{[CorMaeSto05]}.
\end{remark}
\medskip

Some minimality properties of $P_{A, \mathcal S}$ respect to $\mathcal P(A, \mathcal S)$  are proved in \cite[Theorem 3.5]{[CorMaeSto01]} and \cite[Theorem 3.2]{[CorMaeSto02]}. The next result extends these properties to the set $\Pi(A, \mathcal S)$.

\begin{proposition}
Suppose that the pair $(A, \mathcal S)$ is compatible. Then
 \begin{enumerate}
 \item  $\| P_{A, \mathcal S} \|= \min \{ \|T\|: T  \in \Pi(A, \mathcal S)\}$.
 \item $\|(I-P_{A, \mathcal S})x\|\leq \|(I-T)x\|$, \quad for all $x \in \HH$ and every $T\in \Pi(A, \mathcal S)$.
 \end{enumerate}
\end{proposition}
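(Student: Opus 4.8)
The plan is to reduce both estimates to the Pythagorean identity, exploiting the affine description $\Pi(A,\mathcal S)=P_{A,\mathcal S}+L(\HH,\mathcal N)$ of Proposition \ref{Pi(A,S) variedad afin} together with the splitting $P_{A,\mathcal S}=P_{A,\mathcal S\ominus\mathcal N}+P_{\mathcal N}$ recalled in Section 3, in which $R(P_{A,\mathcal S\ominus\mathcal N})=\mathcal S\ominus\mathcal N$ and $R(P_{\mathcal N})=\mathcal N$ are mutually orthogonal. I fix $T\in\Pi(A,\mathcal S)$ and write $T=P_{A,\mathcal S}+W$ with $R(W)\subseteq\mathcal N$. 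First I would record the structural facts to be used: $I-P_{A,\mathcal S}$ is the projection with range $A(\mathcal S)^\perp\ominus\mathcal N$ and nullspace $\mathcal S$, so $R(I-P_{A,\mathcal S})=A(\mathcal S)^\perp\ominus\mathcal N=A(\mathcal S)^\perp\cap\mathcal N^\perp\subseteq\mathcal N^\perp$; and $\mathcal N\subseteq\mathcal S$, so also $\mathcal S\ominus\mathcal N=\mathcal S\cap\mathcal N^\perp$.

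For item 2: since $R(W)\subseteq\mathcal N\subseteq\mathcal S=N(I-P_{A,\mathcal S})$ we have $(I-P_{A,\mathcal S})W=0$, hence for every $x\in\HH$
$$(I-T)x=(I-P_{A,\mathcal S})x-Wx ,$$
with $(I-P_{A,\mathcal S})x\in\mathcal N^\perp$ and $Wx\in\mathcal N$. These two vectors being orthogonal, the Pythagorean identity gives $\|(I-T)x\|^2=\|(I-P_{A,\mathcal S})x\|^2+\|Wx\|^2\geq\|(I-P_{A,\mathcal S})x\|^2$, which is item 2.

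For item 1: by Corollary \ref{PAS en PiAS}, $P_{A,\mathcal S}\in\mathcal P(A,\mathcal S)\subseteq\Pi(A,\mathcal S)$, so $\|P_{A,\mathcal S}\|$ belongs to the set whose minimum is sought; it remains to show $\|T\|\geq\|P_{A,\mathcal S}\|$ for every $T\in\Pi(A,\mathcal S)$. Writing $T=P_{A,\mathcal S\ominus\mathcal N}+(P_{\mathcal N}+W)$ and noting that $P_{A,\mathcal S\ominus\mathcal N}x\in\mathcal S\ominus\mathcal N$ and $(P_{\mathcal N}+W)x\in\mathcal N$ are orthogonal, I obtain $\|Tx\|^2=\|P_{A,\mathcal S\ominus\mathcal N}x\|^2+\|(P_{\mathcal N}+W)x\|^2\geq\|P_{A,\mathcal S\ominus\mathcal N}x\|^2$ for all $x$, hence $\|T\|\geq\|P_{A,\mathcal S\ominus\mathcal N}\|$. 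The remaining point is the identity $\|P_{A,\mathcal S\ominus\mathcal N}\|=\|P_{A,\mathcal S}\|$: on $\mathcal N^\perp$ the two operators coincide (because $P_{\mathcal N}$ vanishes there) and the norm of $P_{A,\mathcal S\ominus\mathcal N}$ is determined by its restriction to $\mathcal N^\perp$ (it is zero on $\mathcal N$), which yields $\|P_{A,\mathcal S}\|\geq\|P_{A,\mathcal S\ominus\mathcal N}\|$; conversely, from $\|P_{A,\mathcal S}x\|^2=\|P_{A,\mathcal S\ominus\mathcal N}x\|^2+\|P_{\mathcal N}x\|^2$, the bound $\|P_{A,\mathcal S\ominus\mathcal N}x\|=\|P_{A,\mathcal S\ominus\mathcal N}(I-P_{\mathcal N})x\|\leq\|P_{A,\mathcal S\ominus\mathcal N}\|\,\|(I-P_{\mathcal N})x\|$ and $\|P_{A,\mathcal S\ominus\mathcal N}\|\geq1$ give $\|P_{A,\mathcal S}x\|^2\leq\|P_{A,\mathcal S\ominus\mathcal N}\|^2\|x\|^2$. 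Thus $\|T\|\geq\|P_{A,\mathcal S}\|$, and the minimum is attained at $P_{A,\mathcal S}$.

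The main obstacle is exactly that last norm identity, whose only delicate ingredient is $\|P_{A,\mathcal S\ominus\mathcal N}\|\geq1$; this holds because $P_{A,\mathcal S\ominus\mathcal N}$ is a nonzero idempotent, provided $\mathcal S\ominus\mathcal N\neq\{0\}$, i.e. $\mathcal S\not\subseteq N(A)$. In the degenerate situation $\mathcal S\subseteq N(A)$ one has $P_{A,\mathcal S}=P_{\mathcal S}$ and $\Pi(A,\mathcal S)=\{T\in L(\HH):R(T)\subseteq\mathcal S\}$, which contains $0$, so item 1 must be read under the (harmless) standing assumption $\mathcal S\not\subseteq N(A)$; item 2 requires no such restriction. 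Apart from this, the only care needed is in the orthogonality bookkeeping of the two Pythagorean identities, which is immediate from $R(W)\subseteq\mathcal N$, $\mathcal S\ominus\mathcal N\perp\mathcal N$ and $R(I-P_{A,\mathcal S})\subseteq\mathcal N^\perp$.
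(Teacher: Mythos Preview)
Your proof is correct and takes essentially the same approach as the paper: both items are obtained from the Pythagorean identity via the splitting $T=P_{A,\mathcal S\ominus\mathcal N}+W'$ with $W'\in L(\HH,\mathcal N)$, and item~1 is completed by the identity $\|P_{A,\mathcal S}\|=\|P_{A,\mathcal S\ominus\mathcal N}\|$. The only difference is in how this last identity is verified---the paper uses the $C^*$-identity $\|P_{A,\mathcal S}\|^2=\|P_{A,\mathcal S}(P_{A,\mathcal S})^*\|$ together with the orthogonal decomposition $P_{A,\mathcal S}(P_{A,\mathcal S})^*=P_{\mathcal N}+P_{A,\mathcal S\ominus\mathcal N}(P_{A,\mathcal S\ominus\mathcal N})^*$ on $\mathcal N\oplus(\mathcal S\ominus\mathcal N)$, whereas you argue pointwise---but both routes ultimately need $\|P_{A,\mathcal S\ominus\mathcal N}\|\geq1$. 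Your explicit flag of the degenerate case $\mathcal S\subseteq N(A)$ is a genuine improvement: item~1 as stated is literally false there (one has $0\in\Pi(A,\mathcal S)$ while $\|P_{A,\mathcal S}\|=\|P_{\mathcal S}\|=1$ for $\mathcal S\neq\{0\}$), and the paper's proof silently assumes $\mathcal S\ominus\mathcal N\neq\{0\}$ at exactly the same step you isolate.
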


\begin{proof}
\textit{1}.  Consider $T \in \Pi(A, \mathcal S)$. Then, by Proposition  \ref{T en Pi(A,S) sii P(S-N)T=P(A,S-N)}, $T=P_{A, \mathcal S\ominus \mathcal N} + W$ for some $W \in L(\HH,\mathcal N)$. Then
$$
\|Tx\|^2=\|P_{A, \mathcal S\ominus \mathcal N}x\|^2 + \|Wx\|^2 \geq \|P_{A, \mathcal S\ominus \mathcal N}x\|^2, \qquad \textrm{ for all } x \in \HH.
$$
Therefore, $\|T\|\geq
\|P_{A, \mathcal S\ominus \mathcal N}\|.$ Finally, observe that
 \begin{eqnarray*}
  \|P_{A,\mathcal S}\|^2 & = & \|P_{A, \mathcal S}(P_{A, \mathcal S})^*\|=\|P_{\mathcal N}+P_{A, \mathcal S\ominus \mathcal N}(P_{A, \mathcal S\ominus \mathcal N})^* \|\\
                    &= &\max \{\|P_{\mathcal N}\|, \|P_{A, \mathcal S\ominus \mathcal N} \| \}=\| P_{A, \mathcal S\ominus \mathcal N}\|^2,
 \end{eqnarray*}
 because $P_{A, \mathcal S}=P_{A, \mathcal S\ominus \mathcal N}+P_{\mathcal N}$, $P_{A, \mathcal S\ominus\mathcal N}P_{\mathcal N}=0=P_{\mathcal N}(P_{A, \mathcal S\ominus\mathcal N})^*$ and \\$P_{\mathcal N}P_{A, \mathcal S\ominus\mathcal N}=0=(P_{A, \mathcal S\ominus\mathcal N})^*P_{\mathcal N}$.

\textit{2}. Consider $T \in \Pi(A, \mathcal S)$. By Proposition \ref{Pi(A,S) variedad afin},  $T=P_{A, \mathcal S}+W$ for some  $W\in L(\HH, \mathcal N)$.  Observe that
$$
\|(I-T)x\|^2=\|(I-P_{A, \mathcal S})x\|^2+\|Wx\|^2\leq \|(I-P_{A, \mathcal S})x\|^2, \quad \textrm{ for all } x\in \HH,
$$
because $R(I-P_{A, \mathcal S})=N(P_{A, \mathcal S})=A(\mathcal S)^\perp\ominus \mathcal N \subseteq \mathcal N^\perp$
\end{proof}

\bigskip

As an application, we characterize the abstract splines in terms of weighted projections. The theory  of abstract splines is due to Atteia \cite{[Att65]}. The reader is referred to \cite{[CorMaeSto02_splines]} for some relationships between the notion of compatibility and abstract splines in Hilbert spaces.

Given $C\in L(\HH)$, $\mathcal S$ a closed subspace of $\HH$ and $x\in \HH$, an \emph{abstract spline} or a $(C,\mathcal S)$-\emph{spline interpolant} to $x$ is any element of the set
$$
sp(C, \mathcal S, x)=\{y\in x+\mathcal S: \|Cy \|=\min_{s \in \mathcal S} \|C(x+s) \| \}.
$$
If $A=C^*C\in L(\HH)^+$, observe that
$\|y\|_A=\|Cy\|$, for $y\in \HH$. Then
$$
sp(C, \mathcal S, x)=\{y \in x+\mathcal S: \|y \|_A= d_A(x, \mathcal S) \| \}.
$$
where $d_A(x, \mathcal S)=\inf_{s \in \mathcal S}\|x+s\|_A.$

\smallskip

The next proposition contains some results on splines, the proofs can be found in \cite{[CorMaeSto02_splines]}.

\begin{proposition}\label{propiedades de splines}
Given $C\in L(\HH)$, consider $  A=C^*C$. Then
\begin{enumerate}
 \item $sp(C, \mathcal S, x)=(x +\mathcal S)\cap A(\mathcal S)^\perp$, \; for $x\in \HH$.
 \item $sp(C, \mathcal S, x)$ is not empty for every $x\in \HH$ if and only if the pair $(A, \mathcal S)$ is compatible.
 \item If $(A, \mathcal S)$ is compatible and  $x \in \HH\setminus \mathcal S$, then  $sp(C, \mathcal S, x)=\{(I-Q)x: Q\in \mathcal P(A,\mathcal S) \}.$
\end{enumerate}
\end{proposition}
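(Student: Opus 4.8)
The plan is to prove item 1 first; items 2 and 3 then follow with modest extra work. Recall $sp(C,\mathcal{S},x)=\{y\in x+\mathcal{S}:\|y\|_A=d_A(x,\mathcal{S})\}$ and that, since $x+\mathcal{S}=y+\mathcal{S}$ for every $y\in x+\mathcal{S}$, such a $y$ lies in $sp(C,\mathcal{S},x)$ exactly when $\|y\|_A\le\|y+s\|_A$ for all $s\in\mathcal{S}$; that is, $y$ minimizes $\|\cdot\|_A$ over its own coset. I would show this minimality is equivalent to the orthogonality relation $\langle Ay,s\rangle=0$ for all $s\in\mathcal{S}$: if it holds, completing the square gives $\|y+s\|_A^2=\|y\|_A^2+\|s\|_A^2\ge\|y\|_A^2$; conversely, testing $\|y\|_A\le\|y+s\|_A$ with $ts$ for real $t$ forces $\mathrm{Re}\langle Ay,s\rangle=0$, and replacing $s$ by $is$ kills the imaginary part. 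Since $A=A^{*}$, the relation $\langle Ay,s\rangle=0$ for all $s\in\mathcal{S}$ means precisely $y\in A(\mathcal{S})^\perp$, so $sp(C,\mathcal{S},x)=(x+\mathcal{S})\cap A(\mathcal{S})^\perp$. Item 2 is then immediate: $sp(C,\mathcal{S},x)\neq\emptyset$ says $x\in\mathcal{S}+A(\mathcal{S})^\perp$, so this holds for every $x\in\HH$ iff $\HH=\mathcal{S}+A(\mathcal{S})^\perp$, which by item 3 of Proposition \ref{caracterizaciones par compatible} is the compatibility of $(A,\mathcal{S})$.

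For item 3, assume $(A,\mathcal{S})$ compatible and $x\notin\mathcal{S}$. First I would make the spline set explicit: compatibility yields the decomposition $\HH=\mathcal{S}\,\dot{+}\,N(P_{A,\mathcal{S}})$ with $N(P_{A,\mathcal{S}})=A(\mathcal{S})^\perp\ominus\mathcal{N}$, whence $A(\mathcal{S})^\perp=N(P_{A,\mathcal{S}})\oplus\mathcal{N}$; writing $x=P_{A,\mathcal{S}}x+(I-P_{A,\mathcal{S}})x$ with $(I-P_{A,\mathcal{S}})x\in N(P_{A,\mathcal{S}})\subseteq A(\mathcal{S})^\perp$, a point $(I-P_{A,\mathcal{S}})x+s$ with $s\in\mathcal{S}$ lies in $A(\mathcal{S})^\perp$ iff $s\in\mathcal{S}\cap A(\mathcal{S})^\perp=\mathcal{N}$, so by item 1, $sp(C,\mathcal{S},x)=(I-P_{A,\mathcal{S}})x+\mathcal{N}$. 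The inclusion $\{(I-Q)x:Q\in\mathcal{P}(A,\mathcal{S})\}\subseteq sp(C,\mathcal{S},x)$ is routine: $Qx\in\mathcal{S}$ gives $(I-Q)x\in x+\mathcal{S}$, and $AQ=Q^{*}A$ together with $Q|_{\mathcal{S}}=I_{\mathcal{S}}$ yields $\langle A(I-Q)x,s\rangle=\langle Ax,(I-Q)s\rangle=0$ for $s\in\mathcal{S}$, so $(I-Q)x\in A(\mathcal{S})^\perp$.

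The main obstacle is the reverse inclusion: given $n\in\mathcal{N}$, one must exhibit a \emph{bounded} $Q\in\mathcal{P}(A,\mathcal{S})$ with $(I-Q)x=(I-P_{A,\mathcal{S}})x+n$, i.e.\ $Qx=P_{A,\mathcal{S}}x-n$. My approach is to perturb the distinguished projection: for any $Z\in L(\HH,\mathcal{N})$ vanishing on $\mathcal{S}$, a short computation (using $\mathcal{N}\subseteq\mathcal{S}\cap N(A)$, $R(P_{A,\mathcal{S}})=\mathcal{S}$ and $Z|_{\mathcal{S}}=0$, which give $Q^{2}=Q$, $R(Q)=\mathcal{S}$ and $AQ=AP_{A,\mathcal{S}}=P_{A,\mathcal{S}}^{*}A=Q^{*}A$) shows $Q:=P_{A,\mathcal{S}}+Z\in\mathcal{P}(A,\mathcal{S})$. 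It then suffices to pick such a $Z$ with $Zx=-n$, and this is exactly where the hypothesis $x\notin\mathcal{S}$ enters: $x_\perp:=(I-P_{\mathcal{S}})x\neq0$, so the rank-one operator $Z=-\|x_\perp\|^{-2}\langle\,\cdot\,,x_\perp\rangle\,n$ vanishes on $\mathcal{S}$, has range in $\mathbb{C}n\subseteq\mathcal{N}$, and satisfies $Zx=-\|x_\perp\|^{-2}\langle x,x_\perp\rangle\,n=-n$. This closes the reverse inclusion and completes item 3.
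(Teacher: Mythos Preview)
The paper does not give its own proof of this proposition; it merely states the results and refers the reader to \cite{[CorMaeSto02_splines]}. Your argument is correct and self-contained.

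One remark connecting your approach to the paper's framework: your construction in item~3 of $Q=P_{A,\mathcal{S}}+Z$ with $Z\in L(\HH,\mathcal{N})$ vanishing on $\mathcal{S}$ is precisely the affine parametrization of $\mathcal{P}(A,\mathcal{S})$ implicit in Corollary~\ref{car matricial PAS}: in block form $Z=\left(\begin{smallmatrix}0&z\\0&0\end{smallmatrix}\right)$ with $R(z)\subseteq\mathcal{N}=N(a)$, so $a(d+z)=ad=b$ and $Q$ has the required matrix shape. Your explicit rank-one choice of $Z$ is what isolates the role of the hypothesis $x\notin\mathcal{S}$: the perturbations available within $\mathcal{P}(A,\mathcal{S})$ act only through $\mathcal{S}^\perp$, so one needs $(I-P_{\mathcal{S}})x\neq0$ to reach an arbitrary $n\in\mathcal{N}$. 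The paper's own Proposition~\ref{caracterizacion splines con Pi(A,S)}, which proves the analogous identity with $\Pi(A,\mathcal{S})$ in place of $\mathcal{P}(A,\mathcal{S})$, follows exactly the same skeleton (compare with $(I-P_{A,\mathcal{S}})x$, observe the difference lies in $\mathcal{N}$, then invoke the affine description), but there the weaker hypothesis $x\neq0$ suffices because elements of $\Pi(A,\mathcal{S})$ need not fix $\mathcal{S}$ pointwise.
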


\smallskip

\begin{proposition}\label{caracterizacion splines con Pi(A,S)}
Consider $C\in L(\HH)$   and suppose that $(A, \mathcal S)$ is  compatible, where $A=C^*C$. For every nonzero  $x \in \HH$,  it holds that
$$
sp(C, \mathcal S, x)=\{ (I-T)x: T\in \Pi(A, \mathcal S)\}.
$$
\end{proposition}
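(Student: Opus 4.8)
The plan is to reduce everything to two facts already available: the geometric formula $sp(C,\mathcal S,x)=(x+\mathcal S)\cap A(\mathcal S)^\perp$ of Proposition \ref{propiedades de splines}(1), which holds for every $x\in\HH$ with no compatibility hypothesis, and the affine description $\Pi(A,\mathcal S)=P_{A,\mathcal S}+L(\HH,\mathcal N)$ of Proposition \ref{Pi(A,S) variedad afin}. I would prove the two inclusions separately.

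First, for $\{(I-T)x:T\in\Pi(A,\mathcal S)\}\subseteq sp(C,\mathcal S,x)$, fix $T\in\Pi(A,\mathcal S)$. Since $R(T)\subseteq\mathcal S$ one has $(I-T)x\in x+\mathcal S$. Moreover, by Proposition \ref{caracterizacion1 Pi(A,S)} we have $P_{\mathcal S}AT=P_{\mathcal S}A$, hence $P_{\mathcal S}A(I-T)=0$, which says exactly that $A(I-T)x\in\mathcal S^\perp$, i.e. $(I-T)x\in A(\mathcal S)^\perp$. Thus $(I-T)x\in(x+\mathcal S)\cap A(\mathcal S)^\perp=sp(C,\mathcal S,x)$. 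Note this inclusion does not use $x\neq 0$.

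For the reverse inclusion, take $y\in sp(C,\mathcal S,x)$, so $y=x+s_0$ with $s_0\in\mathcal S$ and $y\in A(\mathcal S)^\perp$. Put $n:=(I-P_{A,\mathcal S})x-y$. The key observation is that $n\in\mathcal N=\mathcal S\cap A(\mathcal S)^\perp$: on one hand $(I-P_{A,\mathcal S})x=x-P_{A,\mathcal S}x$ and $y$ both lie in $x+\mathcal S$, so $n\in\mathcal S$; on the other hand $(I-P_{A,\mathcal S})x\in N(P_{A,\mathcal S})=A(\mathcal S)^\perp\ominus\mathcal N\subseteq A(\mathcal S)^\perp$ and $y\in A(\mathcal S)^\perp$, so $n\in A(\mathcal S)^\perp$. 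Since $x\neq 0$, the rank-one operator $W\in L(\HH,\mathcal N)$ given by $Wz=\|x\|^{-2}\langle z,x\rangle\,n$ satisfies $Wx=n$, and then $T:=P_{A,\mathcal S}+W\in\Pi(A,\mathcal S)$ by Proposition \ref{Pi(A,S) variedad afin}, with $(I-T)x=x-P_{A,\mathcal S}x-n=y$. Hence $y$ lies in the right-hand set.

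The only place where the hypothesis $x\neq 0$ enters --- and it is genuinely needed, since for $x=0$ the left-hand side is $\mathcal N$ while the right-hand side is $\{0\}$ --- is the construction of the rank-one $W$ with $Wx=n$; everything else is routine bookkeeping with the known decompositions of $sp(C,\mathcal S,x)$ and of $P_{A,\mathcal S}$. An alternative route would be to invoke Proposition \ref{propiedades de splines}(3) together with Corollary \ref{PAS en PiAS} to settle the case $x\notin\mathcal S$, and to check separately that both sides equal $\mathcal N$ when $x\in\mathcal S\setminus\{0\}$; but the uniform argument above seems cleaner.
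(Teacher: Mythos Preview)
Your proof is correct and follows essentially the same approach as the paper: both directions hinge on $sp(C,\mathcal S,x)=(x+\mathcal S)\cap A(\mathcal S)^\perp$ and, for the hard inclusion, on observing that $(I-P_{A,\mathcal S})x-y\in\mathcal N$ and then building $W\in L(\HH,\mathcal N)$ with $Wx$ equal to this element (the paper leaves the rank-one construction implicit). The only cosmetic difference is in the easy inclusion, where the paper invokes Proposition~\ref{Lema de Krein general} and Remark~\ref{R(AT)} to get $R(I-T)\subseteq R(AT)^\perp=A(\mathcal S)^\perp$, while you use Proposition~\ref{caracterizacion1 Pi(A,S)} directly via $P_{\mathcal S}A(I-T)=0$; both routes are equivalent.
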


\begin{proof}
Consider $y \in sp(C, \mathcal S, x)$. By item \textit{1.} of Proposition \ref{propiedades de splines} there exists $s\in \mathcal S $ such that $x=s+y$ and $y=x-s\in A(\mathcal S)^\perp$. We are looking for $T\in \Pi(A, \mathcal S)$ such that $(I-T)x=y$, or equivalently $Tx=s$. Note  that $y_1=(I-P_{A, \mathcal S})x\in N(P_{A, \mathcal S})\subseteq A(\mathcal S)^\perp$. Then
$$
y_1-y=(I-P_{A, \mathcal S})x-(x-s)=s-P_{A, \mathcal S}x \in \mathcal S\cap A(\mathcal S)^\perp =\mathcal N.
$$
Therefore, since $x\neq 0$, we can consider $W\in L(\HH, \mathcal N)$ sucht that $Wx=s-P_{A, \mathcal S}x$. By Proposition \ref{Pi(A,S) variedad afin} it follows that $T=P_{A, \mathcal S}+W \in \Pi(A, \mathcal S)$; moreover $Tx=s$.

Conversely, let $T\in \Pi(A, \mathcal S)$. Then $(I-T)x \in (x +\mathcal S)\cap R(I-T)$.
But, by  Proposition \ref{Lema de Krein general} and  Remark \ref{R(AT)}, $R(I-T)\subseteq R(AT)^\perp=A(\mathcal S)^\perp$. Therefore
$$
(I-T)x \in (x +\mathcal S)\cap A(\mathcal S)^\perp =sp(C, \mathcal S, x),
$$
by the above proposition.
\end{proof}

 Observe that, by  item \textit{1} of Proposition \ref{propiedades de splines},  $sp(C, \mathcal S,0)=\mathcal N$.

\section{Weighted  inverses }
Throughout this section,  $A\in L(\HH)^+$ and  $B\in L(\HH)$ is a closed range operator.

\begin{definition}\rm
 Given $y\in \HH$, $x_0\in \HH$ is an \emph{$A$-least squares solution} or an \emph{$A$-LSS} of $Bx=y$ if
\begin{equation} \label{A-LSS}
\|y-Bx_0\|_{A} \leq \|y-Bx\|_{A}, \quad x \in \HH.
\end{equation}
\end{definition}

\smallskip

\begin{remark}\rm \label{caracterizacion least squares solution}
Given $y\in \HH$,
$x_0$ satisfies \eqref{A-LSS} if and only if $\|A^{1/2}(y-Bx_0)\| \leq \|A^{1/2}(y-Bx)\|=\|A^{1/2}(y-Bx_0)+A^{1/2}B(x_0-x)\|$ for all $x\in \HH$, or equivalently $ \langle A^{1/2}y-A^{1/2}Bx_0, A^{1/2}Bz \rangle=0$ for all $z\in \HH$ (recall that given $a, b \in \HH$, it holds that $\|a\|\leq \|a+tb\|$ for all $t\in \mathbb{C} $ if and only if $\langle a, b \rangle =0$).
Then   $x_0$ is an $A$-LSS of $Bx=y$ if and only if  $x_0$ is a solution of
\begin{equation}\label{ecuacion normal}
B^*ABx=B^*Ay.
\end{equation}
Equation \eqref{ecuacion normal} is  the \emph{normal equation} associated to \eqref{A-LSS}.
\end{remark}

\medskip
The next two results generalize \cite[Proposition 4.4]{[CorMae05]} and  \cite[Lemma 4.6]{[CorMae05]}.

\begin{proposition}\label{caracterizacion A-LSS}
Suppose $(A, R(B))$ is compatible and consider $y\in \HH$, $y\neq 0$. Then $u\in \HH$ is an $A$-LSS of $Bx=y$ if and only if there exists $T\in \Pi(A,R(B))$ such that $Bu=Ty$.
\end{proposition}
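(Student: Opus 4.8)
The plan is to translate the $A$-LSS condition into the normal equation from Remark~\ref{caracterizacion least squares solution} and then exhibit (or recognize) the required $A$-projection. Write $\mathcal S=R(B)$, which is closed by hypothesis, and recall that compatibility of $(A,\mathcal S)$ guarantees $\Pi(A,\mathcal S)\neq\emptyset$ by Proposition~\ref{(A,S) cpt sii Pi(A,S) no vacio}. For the forward implication, suppose $u$ is an $A$-LSS of $Bx=y$; by Remark~\ref{caracterizacion least squares solution} this is equivalent to $\langle A^{1/2}(y-Bu),A^{1/2}Bz\rangle=0$ for all $z\in\HH$, i.e. $A(y-Bu)\perp\mathcal S$, i.e. $P_{\mathcal S}A(y-Bu)=0$, i.e. $P_{\mathcal S}ABu=P_{\mathcal S}Ay$. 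Set $v=Bu\in\mathcal S$; I want to produce $T\in\Pi(A,\mathcal S)$ with $Ty=v$. Using the characterization $\Pi(A,\mathcal S)=\{T: R(T)\subseteq\mathcal S,\ AT=T^*A,\ ATP_{\mathcal S}=AP_{\mathcal S}\}$ from \eqref{caracterizacion2 Pi(A,S)}, together with Proposition~\ref{Pi(A,S) variedad afin} which says $\Pi(A,\mathcal S)=P_{A,\mathcal S}+L(\HH,\mathcal N)$, it suffices to find $W\in L(\HH,\mathcal N)$ such that $(P_{A,\mathcal S}+W)y=v$, i.e. $Wy=v-P_{A,\mathcal S}y$. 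Since $y\neq 0$, such a bounded $W$ exists provided $v-P_{A,\mathcal S}y\in\mathcal N=\mathcal S\cap N(A)$; this is the key point to verify.

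To check $v-P_{A,\mathcal S}y\in\mathcal N$: both $v\in\mathcal S$ and $P_{A,\mathcal S}y\in\mathcal S$, so $v-P_{A,\mathcal S}y\in\mathcal S$. It remains to show $A(v-P_{A,\mathcal S}y)=0$. Now $AP_{A,\mathcal S}=AP_{A,\mathcal S}P_{\mathcal S}$ has the property that $P_{\mathcal S}AP_{A,\mathcal S}=P_{\mathcal S}A$ (since $P_{A,\mathcal S}\in\mathcal P(A,\mathcal S)\subseteq\Pi(A,\mathcal S)$ by Corollary~\ref{PAS en PiAS}, and the defining relation of $\Pi(A,\mathcal S)$ in item~3 of Proposition~\ref{caracterizacion1 Pi(A,S)} gives $P_{\mathcal S}AP_{A,\mathcal S}=P_{\mathcal S}A$); hence $P_{\mathcal S}AP_{A,\mathcal S}y=P_{\mathcal S}Ay=P_{\mathcal S}Av$. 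Also $R(P_{A,\mathcal S})=\mathcal S$ so $AP_{A,\mathcal S}y=A(\text{something in }\mathcal S)$; more directly, using $AP_{A,\mathcal S}=P_{A,\mathcal S}^*A$ ($A$-selfadjointness), $A(v-P_{A,\mathcal S}y)\in\overline{R(A)}$ while $\langle A(v-P_{A,\mathcal S}y),s\rangle = \langle P_{\mathcal S}A(v-P_{A,\mathcal S}y),s\rangle = \langle P_{\mathcal S}Av-P_{\mathcal S}Ay,s\rangle=0$ for all $s\in\mathcal S$, using $P_{\mathcal S}ABu=P_{\mathcal S}Ay$ and $v=Bu$. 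Thus $A(v-P_{A,\mathcal S}y)\perp\mathcal S$; combined with $v-P_{A,\mathcal S}y\in\mathcal S$ one gets $\langle A(v-P_{A,\mathcal S}y),v-P_{A,\mathcal S}y\rangle=0$, so $A^{1/2}(v-P_{A,\mathcal S}y)=0$, i.e. $v-P_{A,\mathcal S}y\in N(A)\cap\mathcal S=\mathcal N$, as desired. Then $W$ exists, $T=P_{A,\mathcal S}+W\in\Pi(A,\mathcal S)$ and $Bu=v=Ty$.

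For the converse, suppose $T\in\Pi(A,\mathcal S)$ and $Bu=Ty$. I must show $u$ is an $A$-LSS, i.e. $P_{\mathcal S}ABu=P_{\mathcal S}Ay$. Since $T\in\Pi(A,\mathcal S)$, item~3 of Proposition~\ref{caracterizacion1 Pi(A,S)} gives $P_{\mathcal S}AT=P_{\mathcal S}A$, hence $P_{\mathcal S}ABu=P_{\mathcal S}ATy=P_{\mathcal S}Ay$; by Remark~\ref{caracterizacion least squares solution} this is exactly the normal equation, so $u$ is an $A$-LSS of $Bx=y$.

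The main obstacle is the forward direction: one must correctly identify that the "defect" vector $Bu-P_{A,\mathcal S}y$ lands in $\mathcal N$ (not merely in $\mathcal S$ or in $N(A)$), which is what lets the affine description $\Pi(A,\mathcal S)=P_{A,\mathcal S}+L(\HH,\mathcal N)$ do the work; the hypothesis $y\neq 0$ is exactly what is needed to build a bounded rank-controlled $W$ sending $y$ to a prescribed vector of $\mathcal N$. Everything else is routine bookkeeping with the characterizations of $\Pi(A,\mathcal S)$ and the normal equation.
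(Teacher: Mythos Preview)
Your proof is correct. The paper takes a different route: it reduces the statement in one line to the spline characterization of Proposition~\ref{caracterizacion splines con Pi(A,S)}, observing that $u$ is an $A$-LSS of $Bx=y$ precisely when $y-Bu\in sp(A^{1/2},R(B),y)$, and then quotes that proposition. You instead work directly from the normal equation $P_{\mathcal S}ABu=P_{\mathcal S}Ay$ (equivalently $B^*ABu=B^*Ay$) and the affine description $\Pi(A,\mathcal S)=P_{A,\mathcal S}+L(\HH,\mathcal N)$. But the heart of the forward implication---showing that the defect $Bu-P_{A,\mathcal S}y$ lies in $\mathcal N$ and then producing a rank-one $W\in L(\HH,\mathcal N)$ mapping $y$ to that defect---is exactly the mechanism the paper uses inside the proof of Proposition~\ref{caracterizacion splines con Pi(A,S)} (with $x$ playing the role of your $y$ and $s$ the role of your $v=Bu$). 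So your argument is essentially the paper's argument with the spline layer unrolled; the paper's approach buys modularity (splines are treated separately and reused), while yours buys directness (no detour through the spline set).
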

\begin{proof}
Observe that $u\in \HH$ is an $A$-LSS of $Bx=y$ if and only if $\|Bu-y\|_A=\inf _{\sigma \in R(B)}\|\sigma +y\|_A$, or  $y-Bu \in sp(A^{1/2}, R(B), y)$; or equivalently,  by Proposition \ref{caracterizacion splines con Pi(A,S)}, $Bu=Ty$ for some  $T\in \Pi(A, R(B))$.
\end{proof}

\smallskip

\begin{corollary}
Let $(A, N(B))$ be compatible and consider $x_0\in N(B)^\perp$, $x_0\neq0$ and $u\in x_0+N(B)$. Then $\|u\|_{A}\leq \|x\|_{A}$ for all $x\in x_0+N(B)$ if and only if there exists $T\in \Pi(A, N(B))$ such that $u=(I-T)x_0$.
\end{corollary}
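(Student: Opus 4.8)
The goal is to reduce this statement to Proposition~\ref{caracterizacion A-LSS} (equivalently, to the spline characterization in Proposition~\ref{caracterizacion splines con Pi(A,S)}) applied to a suitable operator playing the role of $B$. The natural candidate is $B = I$ restricted to the relevant subspaces, or more precisely: the condition ``$\|u\|_A \le \|x\|_A$ for all $x \in x_0 + N(B)$'' says exactly that $u$ is a point of minimal $A$-seminorm in the affine set $x_0 + N(B)$, i.e. $u \in sp(C, N(B), x_0)$ where $C = A^{1/2}$ (so that $\|y\|_A = \|Cy\|$ and $A = C^*C$). So the plan is: first rewrite the minimality condition as the spline membership $u \in sp(A^{1/2}, N(B), x_0)$; then invoke Proposition~\ref{caracterizacion splines con Pi(A,S)} with the closed subspace $\mathcal S = N(B)$ and the compatible pair $(A, N(B))$, which is exactly our hypothesis.

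First I would note that $x_0 + N(B)$ is precisely the coset of $x_0$ modulo $N(B)$, so $x \in x_0 + N(B)$ iff $x = x_0 + s$ for some $s \in N(B)$. Hence $\|u\|_A \le \|x\|_A$ for all such $x$ is, by definition, the statement
$$
\|u\|_A = \min_{s \in N(B)} \|x_0 + s\|_A = d_A(x_0, N(B)),
$$
together with $u \in x_0 + N(B)$; that is, $u \in sp(A^{1/2}, N(B), x_0)$ in the notation of the excerpt (with $C = A^{1/2}$, $A = C^*C$). Since $x_0 \in N(B)^\perp$ and $x_0 \ne 0$, in particular $x_0 \notin N(B)$ and $x_0 \ne 0$, so the hypotheses of Proposition~\ref{caracterizacion splines con Pi(A,S)} are met for the pair $(A, N(B))$. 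Applying that proposition with $\mathcal S = N(B)$ gives
$$
sp(A^{1/2}, N(B), x_0) = \{(I - T)x_0 : T \in \Pi(A, N(B))\}.
$$
Combining the two displays yields exactly the claimed equivalence: $\|u\|_A \le \|x\|_A$ for all $x \in x_0 + N(B)$ if and only if $u = (I - T)x_0$ for some $T \in \Pi(A, N(B))$.

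I do not expect any genuine obstacle here; the statement is essentially Proposition~\ref{caracterizacion splines con Pi(A,S)} transcribed into the language of cosets of $N(B)$, much as Proposition~\ref{caracterizacion A-LSS} was the same result transcribed for $R(B)$. The only point requiring a word of care is the role of the hypotheses $x_0 \in N(B)^\perp$ and $x_0 \ne 0$: the nonzero assumption is what is actually used (it is needed in Proposition~\ref{caracterizacion splines con Pi(A,S)} to build the operator $W \in L(\HH, \mathcal N)$ with prescribed value at $x_0$), while $x_0 \in N(B)^\perp$ serves only to pin down $x_0$ as the canonical representative of its coset and to guarantee $x_0 \notin N(B)$ when $x_0 \neq 0$. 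So the write-up is short: establish the rephrasing as a spline, cite Proposition~\ref{caracterizacion splines con Pi(A,S)}, and conclude.
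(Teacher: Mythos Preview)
Your proof is correct. It differs from the paper's in a useful way: the paper proves the $\Leftarrow$ direction by applying the preceding Proposition~\ref{caracterizacion A-LSS} with the operator $P_{N(B)}$ in the role of $B$ and $y=-x_0$ (this is where the hypothesis $x_0\in N(B)^\perp$ is actually used, to write $u=x_0+P_{N(B)}u$), and then outsources the $\Rightarrow$ direction to an external reference, \cite[Lemma~4.6]{[CorMae05]}. You instead go straight to Proposition~\ref{caracterizacion splines con Pi(A,S)} with $\mathcal S=N(B)$, which handles both directions at once and keeps the argument self-contained within the paper. Your observation that only $x_0\neq 0$ is genuinely needed is also correct: Proposition~\ref{caracterizacion splines con Pi(A,S)} requires nothing more, so the assumption $x_0\in N(B)^\perp$ is merely a normalization of the coset representative in your route.
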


\begin{proof}
Since $x_0\in N(B)^\perp$ and $u\in x_0+N(B)$, then $u=x_0+ P_{N(B)}u$. Consider $T\in \Pi(A, N(B))$ such that $u=(I-T)x_0$, so that $P_{N(B)}u=-Tx_0$. Since $x_0\neq 0$, by the previous proposition it holds that $u$ is an $A$-LSS of $P_{N(B)}x=-x_0$, then  $\|P_{N(B)}u+x_0\|_{A}\leq \|P_{N(B)}x+x_0\|_{A}$ for all $x\in \HH$, or equivalently  $\|u\|_{A}\leq \|x\|_{A}$ for all $x\in x_0+N(B)$.
The converse follows by \cite[Lemma 4.6]{[CorMae05]}.
\end{proof}
\medskip

The following concept was introduced by Rao and Mitra for finite dimensional spaces, \cite{[RaoMit73]}.

\begin{definition}\rm
 An operator $G\in L(\HH)$ is called an  \emph{$A$-inverse} of $B$ if for each $y \in \HH$, $Gy$ is an $A$-LSS of $Bx=y$, i.e.
$$
\|y-BGy\|_{A} \leq \|y-Bx\|_{A}, \quad x \in \HH.
$$
\end{definition}

\smallskip

\begin{remark}\rm
If $G$ is an $A$-inverse of $B$ then $R(G)$ is not necessarily closed. In fact, if $A$ has infinite dimensional nullspace consider   $G_1\in L(N(A))$ such that $R(G_1)$ is not closed. It is easy to see that $G=G_1P_{N(A)}+P_{N(A)^\perp}$ is an $A$-inverse of $I$.
\end{remark}

\medskip

The following result gives a necessary and sufficient condition for an operator $B$ with closed range to admit an $A$-inverse.

\begin{proposition}\label{existe A-inversa de B sii (A,R(B)) cpt}
The operator $B$ admits an $A$-inverse  if and only if  \\$(A, R(B))$ is compatible.
\end{proposition}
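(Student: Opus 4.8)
The plan is to characterize the existence of an $A$-inverse of $B$ via the normal equation and Douglas' theorem, reducing it to the compatibility condition $R(b)\subseteq R(a)$ from Proposition \ref{caracterizaciones par compatible}(4). First I would use Remark \ref{caracterizacion least squares solution}: an operator $G\in L(\HH)$ is an $A$-inverse of $B$ if and only if for every $y\in\HH$, the vector $Gy$ solves the normal equation $B^*ABx=B^*Ay$; since this must hold for all $y$, this is equivalent to the operator equation $B^*ABG=B^*A$. So the question becomes: does the equation $B^*ABX=B^*A$ admit a \emph{bounded} solution $X\in L(\HH)$?

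For the ``only if'' direction, suppose such a $G$ exists, so $B^*ABG=B^*A$. Applying Douglas' theorem to the equation $(B^*AB)X=B^*A$, solvability in $L(\HH)$ is equivalent to $R(B^*A)\subseteq R(B^*AB)$, equivalently $B^*A A^*B\le \lambda\, B^*AB\,B^*AB$ for some $\lambda>0$ (here $A^*=A$). I would then translate this into the compatibility condition for $(A,R(B))$. Writing $P=P_{R(B)}$ and using the matrix form $A=\left(\begin{smallmatrix}a&b\\ b^*&c\end{smallmatrix}\right)$ of Proposition \ref{caracterizaciones par compatible} induced by $P_{R(B)}$, one has $R(B)=R(P)$ and the relevant operators $B^*AB$, $B^*A$ can be analyzed through $B$'s reduced factorization; since $B$ has closed range, $R(B^*)=N(B)^\perp$ is closed and $B$ restricts to an isomorphism $N(B)^\perp\to R(B)$, so $R(B^*ABX)=R(B^*A)$ is equivalent to $R(ABX\,|\,)=R(A\,|_{R(B)})$ after peeling off the isomorphism, which is exactly $R(b)\subseteq R(a)$. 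Hence $(A,R(B))$ is compatible.

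For the ``if'' direction, assume $(A,R(B))$ is compatible. By Proposition \ref{(A,S) cpt sii Pi(A,S) no vacio} there is an $A$-projection $T\in\Pi(A,R(B))$; in particular $R(T)\subseteq R(B)$ and $ATP_{R(B)}=AP_{R(B)}$, $AT=T^*A$. I would construct $G$ explicitly: since $B$ has closed range, $B^\dagger\in L(\HH)$ exists with $BB^\dagger=P_{R(B)}$, and I set $G=B^\dagger T$. Then $BG=BB^\dagger T=P_{R(B)}T=T$ because $R(T)\subseteq R(B)$, so $BGy=Ty$ for all $y$; by Proposition \ref{caracterizacion A-LSS} (applied for $y\neq0$, the case $y=0$ being trivial since $T$ is $A$-idempotent and $A$-selfadjoint so $0=B\cdot 0$ is an $A$-LSS of $Bx=0$) this says exactly that $Gy$ is an $A$-LSS of $Bx=y$ for every $y$, i.e. $G$ is an $A$-inverse of $B$.

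The main obstacle I anticipate is the bookkeeping in the ``only if'' direction: carefully converting the operator-range inclusion $R(B^*A)\subseteq R(B^*AB)$ furnished by Douglas' theorem into the clean condition $R(b)\subseteq R(a)$ of Proposition \ref{caracterizaciones par compatible}(4). The cleanest route is probably to avoid Douglas here altogether and argue directly: from $B^*ABG=B^*A$ one gets $B^*A(BG-I)=0$, i.e. $R(BG-I)\subseteq N(B^*A)=N(A^{1/2}B^*)^{\,}$... more usefully $R(A(I-BG))\subseteq N(B^*)=R(B)^\perp$, so $Q:=P_{R(B)}BG$ satisfies $AQ=AP_{R(B)}$-type relations; combined with $R(Q)\subseteq R(B)$ one shows $Q$ (or a compression of it) lies in $\mathcal P(A,R(B)\ominus\mathcal N)$, giving compatibility. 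I would adopt whichever of these two presentations is shorter once the details are written out.
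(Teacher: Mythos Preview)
Your proposal is correct, but for the ``only if'' direction you are working much harder than necessary. The paper dispatches it in one line: if $G$ is an $A$-inverse of $B$, set $T=BG$; then $R(T)\subseteq R(B)$ and the very definition of $A$-inverse says $\|y-Ty\|_A\le\|y-Bx\|_A$ for all $x,y$, so $T$ is an $A$-projection into $R(B)$, and Proposition~\ref{(A,S) cpt sii Pi(A,S) no vacio} gives compatibility. No normal equation, no Douglas, no matrix bookkeeping. Your second sketch is actually close to this: once you observe $B^*A(I-BG)=0$, i.e.\ $P_{R(B)}A(I-BG)=0$, you have exactly condition~(3) of Proposition~\ref{caracterizacion1 Pi(A,S)} for $T=BG$, which is the paper's argument recovered the hard way. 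Your first sketch via $R(B^*A)\subseteq R(B^*AB)$ and peeling off the isomorphism $B\colon N(B)^\perp\to R(B)$ also works and does yield $R(b)\subseteq R(a)$, but it is unnecessary machinery here.

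For the ``if'' direction your argument coincides with the paper's: both take $T\in\Pi(A,R(B))$ from Proposition~\ref{(A,S) cpt sii Pi(A,S) no vacio} and solve $BX=T$; the paper invokes Douglas' theorem abstractly, you write the solution $G=B^\dagger T$ explicitly, which is the same thing since $B$ has closed range. Invoking Proposition~\ref{caracterizacion A-LSS} is fine but also unnecessary: once $BG=T$ with $T$ an $A$-projection into $R(B)$, the inequality $\|y-BGy\|_A\le\|y-Bx\|_A$ is immediate from the definition.
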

\begin{proof}
Let $G\in L(\HH)$ be an $A$-inverse of $B$ and consider $T=BG$. Then $R(T)\subseteq R(B)$ and $\|y-Ty\|_{A}=\|y-BGy\|_{A} \leq \|y-Bx\|_{A}$ for all $ x\in \HH$, so that $T$ is an $A$-projection into $R(B)$. Then $(A, R(B))$ is compatible by  Proposition \ref{(A,S) cpt sii Pi(A,S) no vacio}.
 Conversely, if $(A, R(B))$ is compatible, using again Proposition \ref{(A,S) cpt sii Pi(A,S) no vacio}, let $T$ be an $A$-projection into $R(B)$.
Since $R(T)\subseteq R(B)$, by Douglas' theorem there exists $G\in L(\HH)$ such that  $T=BG$. Therefore,
$$
\|y-BGy\|_{A}=\|y-Ty\|_{A}\leq \|y-Bx\|_A,  \qquad  \textrm{for } x\in \HH,
$$
so that $G$ is an $A$-inverse of $B$.
 \end{proof}

\begin{remark}\rm\label{G es A-inv sii BG is A-proy}
It follows from the above proof that, if $G$ is an $A$-inverse of $B$ then $T=BG$ is an $A$-projection into $R(B)$. Conversely, given $T$ an $A$-projection into  $R(B)$, the solutions of $BX=T$ are $A$-inverses of $B$.
\end{remark}
\medskip

The next result gives  necessary and sufficient conditions for an operator $G\in L(\HH)$ to be an $A$-inverse of $B$.

\begin{proposition}\label{caract A-inversa}
Given $G\in L(\HH)$ then $G$ is an $A$-inverse of $B$ if and only if $B^*ABG=B^*A$.
\end{proposition}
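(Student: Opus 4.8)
The plan is to reduce the statement to the normal-equation characterization of $A$-LSS already established in Remark \ref{caracterizacion least squares solution}. Recall that $G$ is an $A$-inverse of $B$ precisely when, for every $y\in\HH$, the vector $Gy$ is an $A$-LSS of $Bx=y$; and by Remark \ref{caracterizacion least squares solution}, $Gy$ is an $A$-LSS of $Bx=y$ if and only if $Gy$ solves the normal equation $B^*ABx=B^*Ay$, i.e.\ $B^*ABGy=B^*Ay$. So the main observation is simply that $G$ is an $A$-inverse of $B$ if and only if $B^*ABGy=B^*Ay$ for all $y\in\HH$, which is exactly the operator identity $B^*ABG=B^*A$.

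The proof then has two short directions. First I would assume $G$ is an $A$-inverse of $B$; fix an arbitrary $y\in\HH$; apply Remark \ref{caracterizacion least squares solution} with $x_0=Gy$ to conclude $B^*ABGy=B^*Ay$; and since $y$ was arbitrary, deduce $B^*ABG=B^*A$. Conversely, assuming $B^*ABG=B^*A$, apply this to an arbitrary $y\in\HH$ to get $B^*AB(Gy)=B^*Ay$, so $Gy$ solves the normal equation associated to $\|y-Bx\|_A$, hence by Remark \ref{caracterizacion least squares solution} again $Gy$ is an $A$-LSS of $Bx=y$; as this holds for every $y$, $G$ is an $A$-inverse of $B$.

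There is essentially no obstacle here: the content is entirely contained in the equivalence ``$x_0$ is an $A$-LSS of $Bx=y$ $\iff$ $B^*ABx_0=B^*Ay$'', which is precisely Remark \ref{caracterizacion least squares solution}, itself a standard variational argument (the condition $\|a\|\le\|a+tb\|$ for all $t\in\mathbb{C}$ being equivalent to $\langle a,b\rangle=0$, applied with $a=A^{1/2}(y-Bx_0)$ and $b$ ranging over $A^{1/2}B(\HH)$). The only point worth stating carefully is the passage from the pointwise identity (for each $y$) to the operator identity, which is immediate since $\HH$ is the whole domain. I would therefore keep the proof to a couple of sentences, citing Remark \ref{caracterizacion least squares solution} for the normal-equation equivalence.
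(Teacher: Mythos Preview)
Your proof is correct, but it takes a different route from the paper's. You go directly from the definition of $A$-inverse to the normal-equation characterization in Remark \ref{caracterizacion least squares solution}: $G$ is an $A$-inverse of $B$ iff $Gy$ is an $A$-LSS of $Bx=y$ for every $y$, iff $B^*ABGy=B^*Ay$ for every $y$, iff $B^*ABG=B^*A$. The paper instead routes the argument through the $A$-projection machinery: it invokes Remark \ref{G es A-inv sii BG is A-proy} to identify $A$-inverses of $B$ with operators $G$ such that $BG\in\Pi(A,R(B))$, and then applies the characterization in Proposition \ref{caracterizacion1 Pi(A,S)} (namely $P_{R(B)}A(BG)=P_{R(B)}A$, which for closed-range $B$ is equivalent to $B^*ABG=B^*A$). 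Your argument is shorter and entirely self-contained once Remark \ref{caracterizacion least squares solution} is in hand; the paper's argument is a bit less direct but has the virtue of making explicit the correspondence between $A$-inverses and $A$-projections, which is the organizing theme of the section.
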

\begin{proof}
 Let $G\in L(\HH)$ be an $A$-inverse of $B$. By Remark \ref{G es A-inv sii BG is A-proy}, it holds that $T=BG$ is an $A$-projection into $R(B)$. Hence, by Proposition \ref{caracterizacion1 Pi(A,S)}, it follows that $P_{R(B)}AT=P_{R(B)}A$  so that  $B^*ABG=B^*A$.
Conversely, consider $T=BG$, then $B^*AT=B^*A$, or equivalently, $P_{R(B)}AT=P_{R(B)}A$. Therefore, by Proposition \ref{caracterizacion1 Pi(A,S)}, $T=BG$ is an $A$-projection into $R(B)$. Finally, by Remark \ref{G es A-inv sii BG is A-proy}, $G$ is an $A$-inverse of $B$.
%
%
%
\end{proof}

\smallskip

\begin{corollary}
If $(A, R(B))$ is compatible, then the set of $A$-inverses of $B$ is
$$
(B^*AB)^\dagger B^*A+ L(\HH, N(B^*AB)).
$$

\end{corollary}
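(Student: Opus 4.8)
The plan is to combine the characterization just obtained in Proposition~\ref{caract A-inversa} with the standard description of the solution set of a consistent operator equation. First I would observe that, by Proposition~\ref{caract A-inversa}, an operator $G\in L(\HH)$ is an $A$-inverse of $B$ precisely when it solves the equation $B^*ABG=B^*A$, so the task reduces to describing the set of solutions $G$ of $(B^*AB)G=B^*A$. Since $B$ has closed range and $A\in L(\HH)^+$, the operator $B^*AB$ has closed range as well (this is where compatibility of $(A,R(B))$ enters, or alternatively one checks $R(B^*AB)=R(B^*A)$ directly), so its Moore--Penrose inverse $(B^*AB)^\dagger$ is a bounded operator.

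Next I would argue that the equation $(B^*AB)G=B^*A$ is consistent: by Proposition~\ref{existe A-inversa de B sii (A,R(B)) cpt}, compatibility of $(A,R(B))$ guarantees the existence of at least one $A$-inverse, hence at least one solution; equivalently, one verifies $R(B^*A)\subseteq R(B^*AB)$ via Douglas' theorem. For a consistent equation $CX=D$ with $C=B^*AB$ having closed range, the general solution is $X=C^\dagger D + Z$ where $Z$ ranges over all operators with $R(Z)\subseteq N(C)$, i.e. $Z\in L(\HH,N(C))$; here $C^\dagger C = P_{\overline{R(C^*)}}=P_{N(C)^\perp}$ so that $C^\dagger D$ is indeed one particular solution and $N(C)=N(B^*AB)$. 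This yields exactly $(B^*AB)^\dagger B^*A + L(\HH,N(B^*AB))$.

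The only genuinely routine checks are: that $(B^*AB)^\dagger B^*A$ really is a solution (multiply by $B^*AB$ on the left, use $B^*AB(B^*AB)^\dagger = P_{\overline{R(B^*AB)}}$ together with $R(B^*A)\subseteq R(B^*AB)$), and that every solution differs from it by an element of $L(\HH,N(B^*AB))$ (subtract two solutions to land in the nullspace, and conversely any such perturbation is annihilated by $B^*AB$). I expect the main point requiring care to be the verification that $B^*AB$ has closed range and that $R(B^*A)=R(B^*AB)$, which is what makes both $(B^*AB)^\dagger$ bounded and the parametrization valid; this is precisely where the compatibility hypothesis (equivalently $R(b)\subseteq R(a)$ from Proposition~\ref{caracterizaciones par compatible}) is used, since without it the equation $B^*ABG=B^*A$ need not be solvable in $L(\HH)$.
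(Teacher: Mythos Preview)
The paper states this corollary without proof, so your derivation from Proposition~\ref{caract A-inversa} is exactly the intended route: characterize $A$-inverses as the bounded solutions of $B^*AB\,G=B^*A$ and then parametrize that solution set. Your overall structure is correct.

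There is, however, one genuine gap. You assert that compatibility of $(A,R(B))$ forces $B^*AB$ to have closed range, and hence that $(B^*AB)^\dagger$ is bounded. This is false. Take $B=I$: then $(A,R(B))=(A,\HH)$ is trivially compatible (the identity lies in $\mathcal P(A,\HH)$), yet $B^*AB=A$, whose range need not be closed. More generally, writing $A$ in block form~\eqref{desc matricial de op positivo} with respect to $\mathcal S=R(B)$, one checks that $R(B^*AB)$ is closed if and only if $R(a)$ is closed, and compatibility ($R(b)\subseteq R(a)$) says nothing about that.

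What compatibility \emph{does} give you, via Proposition~\ref{existe A-inversa de B sii (A,R(B)) cpt} and Proposition~\ref{caract A-inversa}, is the existence of at least one bounded solution $G_0$ of $B^*AB\,G=B^*A$; equivalently, $R(B^*A)\subseteq R(B^*AB)$. From this you get the parametrization directly: if $G$ is any other bounded solution then $B^*AB(G-G_0)=0$, so $G-G_0\in L(\HH,N(B^*AB))$, and conversely. Thus the solution set is $G_0+L(\HH,N(B^*AB))$ for \emph{any} particular solution $G_0$. To recover the stated representative, observe that $P_{N(B^*AB)^\perp}G_0$ is again a bounded solution (it differs from $G_0$ by an element of $L(\HH,N(B^*AB))$), and it is the unique solution with range in $\overline{R(B^*AB)}$; this is precisely what $(B^*AB)^\dagger B^*A$ denotes, and it is bounded even though $(B^*AB)^\dagger$ alone may not be. So the argument goes through once you drop the unwarranted closed-range claim and argue as above.
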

\medskip

\smallskip

\subsection{Restricted  weighted inverses}
\medskip

Throughout this paragraph,  $\mathcal M$ is a closed subspace of $\HH$ such that $B(\mathcal M)$ is closed, or equivalently, since $B$ has closed range,  $\mathcal M + N(B)$ is a closed subspace of $\HH$.

\begin{definition}\rm
An operator $G\in L(\HH)$ is called an \emph{$A$-inverse of $B$ restricted to $\mathcal M$} if $R(G)\subseteq \mathcal M$ and for each $y\in \HH$ it holds that
$$
\|y-BGy\|_{A}\leq \|y-Bx\|_{A}, \qquad  \forall x\in \mathcal M.
$$
\end{definition}

The concept of  $A$-inverses  restricted to $\mathcal M$ was introduced by Rao and Mitra \cite{[RaoMit73]} for finite dimensional spaces.

In what follows we  show that the existence of an $A$-inverse of $B$ restricted to $\mathcal M$ is equivalent to the compatibility of the pair $(A, B(\mathcal M))$.

\begin{lemma}\label{A-inversa M-restringida sii A-inversa de BPM}
An operator $G\in L(\HH)$ is an   $A$-inverse of $B$ restricted to $\mathcal M$ if and only if  $R(G)\subseteq \mathcal M$ and $G$ is an $A$-inverse of $BP_\mathcal M$.
\end{lemma}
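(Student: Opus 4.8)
The statement is a characterization of the $A$-inverses of $B$ restricted to $\mathcal M$, so the plan is to unwind the two definitions and show they coincide. The key observation is that the condition ``$\|y-Bx\|_A\le \|y-Bx\|_A$ for all $x\in\mathcal M$'' can be rephrased in terms of the operator $BP_{\mathcal M}$, because $B(\mathcal M)=B(P_{\mathcal M}(\HH))=R(BP_{\mathcal M})$. Thus the relevant least squares condition for $BGy$ only involves the subspace $B(\mathcal M)=R(BP_{\mathcal M})$, and not $B$ directly.

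First I would spell out the forward implication. Suppose $G$ is an $A$-inverse of $B$ restricted to $\mathcal M$; then by definition $R(G)\subseteq\mathcal M$, which is half of what we need. For the other half, fix $y\in\HH$ and note that, as $x$ ranges over $\HH$, the vector $P_{\mathcal M}x$ ranges over all of $\mathcal M$, so
$$
\inf_{x\in\HH}\|y-BP_{\mathcal M}x\|_A=\inf_{w\in\mathcal M}\|y-Bw\|_A.
$$
Since $R(G)\subseteq\mathcal M$ we have $P_{\mathcal M}Gy=Gy$, hence $BP_{\mathcal M}Gy=BGy$; combining this with the restricted-inverse inequality $\|y-BGy\|_A\le\|y-Bw\|_A$ for all $w\in\mathcal M$ gives $\|y-(BP_{\mathcal M})Gy\|_A\le\|y-(BP_{\mathcal M})x\|_A$ for all $x\in\HH$, i.e.\ $Gy$ is an $A$-LSS of $(BP_{\mathcal M})x=y$. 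As $y$ was arbitrary, $G$ is an $A$-inverse of $BP_{\mathcal M}$.

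The converse runs in reverse. Assume $R(G)\subseteq\mathcal M$ and that $G$ is an $A$-inverse of $BP_{\mathcal M}$. For any $y\in\HH$ and any $x\in\mathcal M$ we have $BP_{\mathcal M}x=Bx$ and (since $Gy\in\mathcal M$) $BP_{\mathcal M}Gy=BGy$, so the inequality $\|y-BP_{\mathcal M}Gy\|_A\le\|y-BP_{\mathcal M}x\|_A$ becomes $\|y-BGy\|_A\le\|y-Bx\|_A$ for all $x\in\mathcal M$; together with $R(G)\subseteq\mathcal M$ this is exactly the definition of an $A$-inverse of $B$ restricted to $\mathcal M$. I do not anticipate a real obstacle here: the only point requiring mild care is that one must keep the hypothesis $R(G)\subseteq\mathcal M$ explicitly in both directions (it is part of both definitions and is what licenses replacing $BGy$ by $BP_{\mathcal M}Gy$), and that the standing assumption ``$B(\mathcal M)$ closed'' is what makes $BP_{\mathcal M}$ a closed-range operator so that the notion of $A$-inverse of $BP_{\mathcal M}$ from the previous subsection is applicable.
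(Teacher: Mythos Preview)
Your argument is correct and is exactly the routine definition-unwinding the paper has in mind: the paper's own proof is simply the word ``Straightforward.'' Your care in noting that $R(G)\subseteq\mathcal M$ yields $BP_{\mathcal M}G=BG$, and that $B(\mathcal M)$ closed makes $BP_{\mathcal M}$ a closed-range operator so the earlier framework applies, fills in precisely the details the authors omitted.
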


\begin{proof} Straightforward.
\end{proof}

\begin{remark}\rm\label{caracterizacion A-inversa restringida}
By the previous lemma and Proposition \ref{caract A-inversa} applied to $BP_{\mathcal M}$, it holds that $G$ is an  $A$-inverse of $B$ restricted to $\mathcal M$ if and only if
 $R(G)\subseteq \mathcal M$ and $P_{\mathcal M}(B^*ABG-B^*A)=0 $.
\end{remark}

\begin{proposition}\label{una M-restricted A-inverse de B}
Suppose $(A, B(\mathcal M))$ is compatible and consider $T\in \Pi(A, B(\mathcal M))$. Then the reduced solution of
$$
B P_\mathcal MX=T,
$$
is an    $A$-inverse of $B$ restricted to $\mathcal M$.
\end{proposition}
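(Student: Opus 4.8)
The plan is to reduce everything to the results already established for unrestricted $A$-inverses, applied not to $B$ but to the operator $BP_{\mathcal M}$. The point to keep in mind is that $BP_{\mathcal M}$ is a closed range operator: indeed $R(BP_{\mathcal M})=B(\mathcal M)$, which is closed by the standing hypothesis of this subsection, so Proposition \ref{caract A-inversa} and Remark \ref{G es A-inv sii BG is A-proy} are available for $BP_{\mathcal M}$.

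First I would check that the equation $BP_{\mathcal M}X=T$ is actually solvable and identify its reduced solution. Since $T\in\Pi(A,B(\mathcal M))$ we have $R(T)\subseteq B(\mathcal M)=R(BP_{\mathcal M})$, so by Douglas' theorem there is a unique reduced solution $G\in L(\HH)$ with $BP_{\mathcal M}G=T$ and $R(G)\subseteq\overline{R((BP_{\mathcal M})^*)}=\overline{R(P_{\mathcal M}B^*)}$. Because $R(P_{\mathcal M}B^*)\subseteq R(P_{\mathcal M})=\mathcal M$ and $\mathcal M$ is closed, this gives $R(G)\subseteq\mathcal M$, which is exactly the first of the two conditions appearing in Lemma \ref{A-inversa M-restringida sii A-inversa de BPM}.

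Next I would verify the second condition, namely that $G$ is an $A$-inverse of $BP_{\mathcal M}$. By hypothesis $T$ is an $A$-projection into $B(\mathcal M)=R(BP_{\mathcal M})$, and $G$ solves $BP_{\mathcal M}X=T$; so the ``conversely'' part of Remark \ref{G es A-inv sii BG is A-proy}, applied to $BP_{\mathcal M}$ in place of $B$, shows that $G$ is an $A$-inverse of $BP_{\mathcal M}$. Combining $R(G)\subseteq\mathcal M$ with this, Lemma \ref{A-inversa M-restringida sii A-inversa de BPM} then yields that $G$ is an $A$-inverse of $B$ restricted to $\mathcal M$, which is the assertion.

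I do not expect a genuine obstacle; the only steps needing a little care are (i) confirming that the reduced solution of $BP_{\mathcal M}X=T$ has range inside $\mathcal M$, which is where the closedness of $\mathcal M$ and the range clause of Douglas' theorem enter, so that Lemma \ref{A-inversa M-restringida sii A-inversa de BPM} can be invoked; and (ii) making sure Remark \ref{G es A-inv sii BG is A-proy} is legitimately applied to $BP_{\mathcal M}$, which is justified precisely because $B(\mathcal M)$ is closed.
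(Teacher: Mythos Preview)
Your proposal is correct and follows essentially the same route as the paper: apply Remark \ref{G es A-inv sii BG is A-proy} to $BP_{\mathcal M}$ to see that the reduced solution $G$ is an $A$-inverse of $BP_{\mathcal M}$, check $R(G)\subseteq\mathcal M$, and conclude via Lemma \ref{A-inversa M-restringida sii A-inversa de BPM}. The only cosmetic difference is that the paper verifies $R(G)\subseteq\mathcal M$ by computing $N(BP_{\mathcal M})=(\mathcal M\cap N(B))\oplus\mathcal M^\perp$ and using $R(G)\subseteq N(BP_{\mathcal M})^\perp$, whereas you use the equivalent inclusion $R(G)\subseteq\overline{R(P_{\mathcal M}B^*)}\subseteq\mathcal M$ directly.
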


\begin{proof}
Let $G_0$ be the reduced solution of $B P_\mathcal MX=T$, then $R(G_0)\subseteq N(BP_\mathcal M)^\perp$. By Remark \ref{G es A-inv sii BG is A-proy}, $G_0$ is an $A$-inverse of $BP_\mathcal M$. Since $N(BP_\mathcal M)=(\mathcal M \cap N(B))\oplus \mathcal M^\perp$, then $R(G_0)\subseteq
\mathcal M$. Therefore, by Lemma \ref{A-inversa M-restringida sii A-inversa de BPM}, $G_0$ is an    $A$-inverse of $B$ restricted to $\mathcal M$.
\end{proof}

\smallskip

\begin{corollary}
The operator $B$ admits an   $A$-inverse restricted to $\mathcal M$ if and only if $(A, B(\mathcal M))$ is compatible.
\end{corollary}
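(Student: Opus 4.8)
The final statement is the corollary: \emph{The operator $B$ admits an $A$-inverse restricted to $\mathcal M$ if and only if $(A, B(\mathcal M))$ is compatible.}

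The plan is to prove the two implications separately, using Proposition~\ref{una M-restricted A-inverse de B} for the nontrivial direction and Lemma~\ref{A-inversa M-restringida sii A-inversa de BPM} together with Proposition~\ref{existe A-inversa de B sii (A,R(B)) cpt} for the converse. First, suppose $(A, B(\mathcal M))$ is compatible. Since by hypothesis $B(\mathcal M)$ is closed and $\Pi(A, B(\mathcal M))$ is nonempty by Proposition~\ref{(A,S) cpt sii Pi(A,S) no vacio}, pick any $T\in \Pi(A, B(\mathcal M))$. As $R(T)\subseteq B(\mathcal M)=R(BP_{\mathcal M})$, Douglas' theorem guarantees that the equation $BP_{\mathcal M}X=T$ has a solution, hence a reduced solution $G_0$; by Proposition~\ref{una M-restricted A-inverse de B}, $G_0$ is an $A$-inverse of $B$ restricted to $\mathcal M$. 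This gives the ``if'' direction.

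For the converse, suppose $G\in L(\HH)$ is an $A$-inverse of $B$ restricted to $\mathcal M$. By Lemma~\ref{A-inversa M-restringida sii A-inversa de BPM}, $R(G)\subseteq \mathcal M$ and $G$ is an $A$-inverse of $BP_{\mathcal M}$. Since $B$ has closed range and $B(\mathcal M)$ is closed (equivalently $\mathcal M + N(B)$ is closed), the operator $BP_{\mathcal M}$ has closed range $R(BP_{\mathcal M})=B(\mathcal M)$. Therefore Proposition~\ref{existe A-inversa de B sii (A,R(B)) cpt}, applied to $BP_{\mathcal M}$ in place of $B$, yields that $(A, R(BP_{\mathcal M}))=(A, B(\mathcal M))$ is compatible. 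This closes the argument.

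There is no real obstacle here; the only point requiring a moment of care is the bookkeeping that $R(BP_{\mathcal M})=B(\mathcal M)$ is closed, which is exactly the standing hypothesis of the subsection, so Proposition~\ref{existe A-inversa de B sii (A,R(B)) cpt} is legitimately applicable to $BP_{\mathcal M}$. Everything else is a direct citation of the two propositions and the lemma just established.

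\begin{proof}
If $(A, B(\mathcal M))$ is compatible, take $T\in \Pi(A, B(\mathcal M))$, which is nonempty by Proposition~\ref{(A,S) cpt sii Pi(A,S) no vacio}. Since $R(T)\subseteq B(\mathcal M)=R(BP_{\mathcal M})$, by Douglas' theorem the equation $BP_{\mathcal M}X=T$ has a reduced solution $G_0$, and by Proposition~\ref{una M-restricted A-inverse de B}, $G_0$ is an $A$-inverse of $B$ restricted to $\mathcal M$. Conversely, if $G$ is an $A$-inverse of $B$ restricted to $\mathcal M$, then by Lemma~\ref{A-inversa M-restringida sii A-inversa de BPM}, $G$ is an $A$-inverse of $BP_{\mathcal M}$. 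As $BP_{\mathcal M}$ has closed range $B(\mathcal M)$, Proposition~\ref{existe A-inversa de B sii (A,R(B)) cpt} applied to $BP_{\mathcal M}$ shows that $(A, B(\mathcal M))$ is compatible.
\end{proof}
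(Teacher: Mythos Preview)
Your proof is correct and follows essentially the same approach as the paper's own proof: the forward direction invokes Lemma~\ref{A-inversa M-restringida sii A-inversa de BPM} and Proposition~\ref{existe A-inversa de B sii (A,R(B)) cpt} applied to $BP_{\mathcal M}$, and the converse cites Proposition~\ref{una M-restricted A-inverse de B}. Your version is slightly more explicit in spelling out the use of Douglas' theorem and the closedness of $R(BP_{\mathcal M})=B(\mathcal M)$, but the logic is identical.
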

\begin{proof}
If $G$ is an   $A$-inverse of $B$ restricted to $\mathcal M$, then by Remark \ref{A-inversa M-restringida sii A-inversa de BPM}, $G$ is an $A$-inverse of $BP_\mathcal M$, so that $(A,B(\mathcal M))$ is compatible (see Proposition \ref{existe A-inversa de B sii (A,R(B)) cpt}). The converse follows by Proposition \ref{una M-restricted A-inverse de B}.
\end{proof}

\smallskip

\subsection{$A_1A_2$-inverses and weak weighted inverses}

Throughout this section, we consider $B\in L(\HH)$ a closed range operator and $A_1, A_2\in L(\HH)^+$.

\begin{definition}\rm
 An operator $G\in L(\HH)$ is called  an \emph{$A_1A_2$-inverse} of $B$ if $G$ is an $A_1$- inverse of $B$ and, for each $y \in \HH$,  $Gy$ has minimum $A_2$-seminorm  among the $A_1$-LSS of $Bx=y$.
\end{definition}

In \cite{[MitRao74],[RaoMit73]}, $A_1A_2$-inverses are called minimum seminorm semileast squares inverses in the context of finite dimensional spaces.

\smallskip

The next two results are proved in \cite{[MitRao74]}, for finite dimensional Hilbert spaces. The proofs, which follow the same ideas, are included  for the sake of completeness.

\begin{proposition}\label{caracterizacion A_1A_2-inversa}
Consider  $G\in L(\HH)$. Then $G$ is an $A_1A_2$-inverse of $B$ if and only if
\begin{enumerate}
 \item $B^*A_1BG=B^*A_1$,
 \item $R(A_2G) \subseteq N(A_1B)^\perp$.
\end{enumerate}
\end{proposition}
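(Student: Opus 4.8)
The plan is to unwind the definition into two nested optimization conditions and translate each into an operator equation. First I would recall from Remark~\ref{caracterizacion least squares solution} that, for fixed $y$, the set of $A_1$-LSS of $Bx=y$ is exactly the solution set of the normal equation $B^*A_1Bx=B^*A_1y$; when $(A_1,R(B))$ is compatible (which holds automatically here since $G$ is assumed to be an $A_1$-inverse, by Proposition~\ref{existe A-inversa de B sii (A,R(B)) cpt}) this solution set is the affine manifold $x_y + N(B^*A_1B) = x_y + N(A_1B)$, where $x_y$ is any particular $A_1$-LSS. So $G$ being an $A_1$-inverse is precisely condition 1, namely $B^*A_1BG=B^*A_1$, by Proposition~\ref{caract A-inversa}.

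Next, given that $G$ is an $A_1$-inverse, $Gy$ lies in the affine manifold $Gy + N(A_1B)$ of all $A_1$-LSS of $Bx=y$ (it is in there, and any other element differs by something in $N(A_1B)$). The requirement that $Gy$ have minimum $A_2$-seminorm among this manifold says
$$
\|Gy\|_{A_2} \le \|Gy + n\|_{A_2}, \qquad \forall n\in N(A_1B).
$$
By the standard scalar fact recalled in Remark~\ref{caracterizacion least squares solution} (that $\|a\|\le\|a+tb\|$ for all scalars $t$ iff $\langle a,b\rangle=0$), this is equivalent to $\langle A_2 Gy, n\rangle = 0$ for all $n\in N(A_1B)$, i.e. $A_2Gy \in N(A_1B)^\perp$. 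Since this must hold for every $y\in\HH$, it is exactly $R(A_2G)\subseteq N(A_1B)^\perp$, which is condition 2. Conversely, if conditions 1 and 2 both hold, then condition 1 gives (via Proposition~\ref{caract A-inversa}) that $G$ is an $A_1$-inverse of $B$, and for each $y$ the element $Gy$ is an $A_1$-LSS; every other $A_1$-LSS of $Bx=y$ has the form $Gy+n$ with $n\in N(A_1B)$, and condition 2 together with the scalar fact forces $\|Gy\|_{A_2}\le\|Gy+n\|_{A_2}$, so $Gy$ has minimum $A_2$-seminorm. Hence $G$ is an $A_1A_2$-inverse.

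The only point requiring a little care — and the main obstacle — is the identification $N(B^*A_1B)=N(A_1B)$, which is needed to describe the manifold of $A_1$-LSS correctly; this follows since $B^*A_1Bx=0$ implies $\|A_1^{1/2}Bx\|^2=\langle B^*A_1Bx,x\rangle=0$, hence $A_1Bx=0$, and the reverse inclusion is trivial. With that in hand, everything else is the routine translation of "minimizer of a (semi)norm over an affine subspace" into an orthogonality condition, applied uniformly in $y$.
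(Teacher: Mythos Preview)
Your argument is correct and follows essentially the same route as the paper: reduce condition~1 to Proposition~\ref{caract A-inversa}, describe the $A_1$-LSS set via the normal equation as the affine manifold $Gy+N(B^*A_1B)$, translate the $A_2$-seminorm minimality into the orthogonality $\langle A_2Gy,n\rangle=0$ for all $n\in N(B^*A_1B)$ using the scalar fact from Remark~\ref{caracterizacion least squares solution}, and identify $N(B^*A_1B)=N(A_1B)$. The parenthetical remark about compatibility of $(A_1,R(B))$ is unnecessary---the normal-equation description of the $A_1$-LSS set holds regardless---but it does no harm.
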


\begin{proof}
By Proposition \ref{caract A-inversa}, $G$ is an $A_1$-inverse of $B$ if and only if
$B^*A_1BG=B^*A_1$. Let $G$ be an $A_1$-inverse of $B$. It is remains to prove that  $Gy$ has minimum $A_2$-seminorm  among the $A_1$-LSS of $Bx=y$  for each $y \in \HH $ if and only if $R(A_2G) \subseteq \overline{R(B^*A_1^{1/2})}$.
Observe that given $y\in \HH$, by Remark \ref{caracterizacion least squares solution}, any $A_1$-LSS of $Bx=y$ can be written as
$$
x_0=\tilde{x}+P_{N(B^*A_1B)}z,
$$
 where $\tilde{x}=Gy$ is a solution of (\ref{ecuacion normal}) (i.e $B^*A_1B\tilde{x}=B^*A_1y$) and $z\in \HH$. Then
  $ \|Gy\|_{A_2} \leq \|Gy+P_{N(B^*A_1B)}z\|_{A_2}, $ for all $z\in \HH$,
if and only if  $ \|A_2^{1/2}Gy\|\leq \|A_2^{1/2}Gy+A_2^{1/2}P_{N(B^*A_1B)}z)\|, $ for all $z\in \HH$, or
  equivalently \\$\langle A_2Gy, P_{N(B^*A_1B)}z\rangle=0$ for all $z\in \HH$, or $P_{N(B^*A_1B)}A_2G=0$. Therefore, $G$ is an $A_1A_2$-inverse of $B$ if and only if $B^*A_1BG=B^*A_1$ and $R(A_2G)\subseteq N(B^*A_1B)^\perp=N(A_1B)^\perp$.
\end{proof}


\smallskip

\begin{proposition}\label{G A1A2inversa ent G weak A1A2 inversa}
If $G$ is an $A_1A_2$-inverse of $B$, then
\begin{enumerate}
 \item  $A_1BGB=A_1B, \quad A_1BG=(BG)^*A_1,$
 \item $A_2GBG=A_2G, \quad A_2GB=(GB)^*A_2.$
\end{enumerate}
\end{proposition}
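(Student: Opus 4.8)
The plan is to use the characterization of $A_1A_2$-inverses from Proposition~\ref{caracterizacion A_1A_2-inversa}, namely that $G$ is an $A_1A_2$-inverse of $B$ if and only if $B^*A_1BG = B^*A_1$ and $R(A_2G)\subseteq N(A_1B)^\perp$. From the first equation I would derive the two identities in item \textit{1}, and from the second the two identities in item \textit{2}. Each of the four identities splits naturally into an ``idempotent-type'' equation and an ``$A_i$-selfadjoint-type'' equation, so I expect to prove them in that grouping.

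For item \textit{1}, start from $B^*A_1BG = B^*A_1$. Taking adjoints gives $G^*B^*A_1B = A_1B$, i.e. $(BG)^*A_1B = A_1B$. Now I would observe that $T := BG$ is an $A_1$-projection into $R(B)$ (this is exactly Remark~\ref{G es A-inv sii BG is A-proy} together with Proposition~\ref{caract A-inversa}), so by Proposition~\ref{caracterizacion1 A-proyeccion} $T$ is $A_1$-selfadjoint, giving $A_1BG = (BG)^*A_1$; that is the second identity of item \textit{1}. For the first, $A_1BGB = (BG)^*A_1B$ using $A_1$-selfadjointness of $T$, and $(BG)^*A_1B = A_1B$ is the adjoint of the normal equation just noted — so $A_1BGB = A_1B$.

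For item \textit{2}, the key point is that the condition $R(A_2G)\subseteq N(A_1B)^\perp$, together with $R(A_2G)\subseteq N(A_2^{1/2})^{\perp}$ automatically (since $R(A_2G)=R(A_2^{1/2}\cdot A_2^{1/2}G)\subseteq \overline{R(A_2^{1/2})}=N(A_2)^\perp$), says that $A_2G$ annihilates $N(A_1B)$ on the left in the appropriate sense; more precisely $P_{N(A_1B)}A_2G = 0$, equivalently $G^*A_2 P_{N(A_1B)} = 0$, i.e. $G^*A_2 z = 0$ whenever $A_1Bz = 0$. To get $A_2GBG = A_2G$: since $T=BG$ is $A_1$-idempotent (again Proposition~\ref{caracterizacion1 A-proyeccion}), $A_1B(GBG - G) = A_1(BGBG - BG) = A_1(T^2 - T)(\text{via }BG)$ — more carefully, $BGBG = T^2$ and $A_1T^2 = A_1T = A_1BG$, hence $A_1B(GBG-G)=0$, so $R(GBG - G)\subseteq N(A_1B)$, and therefore $A_2 G\,(B G) $ applied appropriately... concretely $A_2(GBG - G) = A_2G\cdot\text{(nothing)}$; the clean way is: $GBG - G$ has range in $N(A_1B)$, and $G^*A_2$ kills $N(A_1B)$, so taking adjoints, $A_2G\cdot(\text{anything landing in }N(A_1B))$—rather, $(GBG-G)^*A_2 G$... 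Let me instead argue $A_2(GBGy - Gy) = 0$ for all $y$: fix $y$; then $z := GBGy - Gy \in N(A_1B)$, and we want $A_2 z \in$ ? That is not what is asserted. The asserted identity is $A_2GBG = A_2G$, i.e. $A_2 G (BGy) = A_2 G y$ for all $y$; writing $w = BGy - ?$... The correct route: $BGB G y - BG y \in N(?)$ is not needed; instead use that $BGy$ and any $A_1$-LSS agree up to $N(B^*A_1B) = N(A_1B)$: indeed $B^*A_1B(GBGy) = B^*A_1(BGBGy) = B^*A_1 T^2 y' $... I would verify $B^*A_1B(GBGy - Gy) = 0$ using $T^2=T$ modulo $N(A_1)$, so $GBGy - Gy \in N(B^*A_1B) = N(A_1B)$, and then since $P_{N(A_1B)}A_2G$ need not kill this—wait, we need $A_2$ of it, not $A_2 G$ of it. Since $GBGy - Gy \in R(G)$ as well? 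No. The honest statement: $GBGy$ and $Gy$ are both legitimate representatives to compare only if $GBGy$ is also of the form ``$G$ applied to something''; it is, $GBGy = G(BGy)$. So both $G(BGy)$ and $G(y)$ should be the minimum-$A_2$-seminorm $A_1$-LSS of $Bx = (\cdot)$ — but for different right-hand sides, so that does not immediately help. The cleanest is the direct computation: $A_2G(BGy - y)$; note $BGy - y$ need not lie in $N(A_1B)$. So I will instead use the adjoint form. From $P_{N(A_1B)}A_2 G = 0$ we get $G^*A_2 = G^*A_2(I - P_{N(A_1B)}) = G^*A_2 P_{\overline{R(B^*A_1)}}$, i.e. $G^*A_2 = G^*A_2 P_{\overline{R(B^*A_1)}}$; and since $\overline{R(B^*A_1)} = \overline{R((A_1B)^*)} = N(A_1B)^\perp$, and on $N(A_1B)^\perp$... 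I would then write $G^*A_2 = X B^*A_1$ for the reduced solution $X$ (Douglas, since $R(G^*A_2)\subseteq N(A_1B)^\perp$ forces $A_2 G G^* A_2 \le \lambda (A_1 B)(A_1B)^*$? not obviously).

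Given the emerging subtlety, I expect the \textbf{main obstacle} to be item \textit{2}, specifically extracting the two identities $A_2GBG = A_2G$ and $A_2GB = (GB)^*A_2$ from the inclusion $R(A_2G)\subseteq N(A_1B)^\perp$. The decisive trick, which I would aim for, is: $R(A_2G)\subseteq N(A_1B)^\perp = \overline{R((A_1B)^*)} = \overline{R(B^*A_1^{1/2})}$, so by Douglas' theorem $A_2G = (B^*A_1^{1/2})Z$ for the reduced solution $Z$, equivalently $A_2 G = B^* A_1 F$ (absorbing $A_1^{1/2}$) for some $F\in L(\HH)$ with suitable range. Then $A_2GB = B^*A_1FB$ and $A_2GBG = B^*A_1FBG = B^*A_1 F \cdot T$; using $B^*A_1 \cdot(\text{from item 1})$ and $T$ being $A_1$-selfadjoint and $A_1$-idempotent one collapses $B^*A_1FT$ back to $B^*A_1F = A_2G$, giving $A_2GBG = A_2G$; and $A_2GB = B^*A_1FB = (B^*A_1FB)^* = (A_2GB)^*$ forces $A_2GB$ selfadjoint, but we need $A_2GB = (GB)^*A_2 = B^*G^*A_2 = B^*(A_2G)^* = B^*(B^*A_1F)^* = B^*F^*A_1B$, and selfadjointness of $B^*A_1FB$ gives $B^*A_1FB = B^*F^*A_1B$, which is exactly $A_2GB = (GB)^*A_2$ after substituting $A_2G = B^*A_1F$ on the left and taking the adjoint relation. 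Item \textit{1} is then the same kind of manipulation but one level simpler because $T = BG$ is genuinely an $A_1$-projection, so Proposition~\ref{caracterizacion1 A-proyeccion} applies directly. Throughout I will be careful that the Douglas factorization is applied with the right space ($\overline{R(B^*A_1^{1/2})}$ versus $\overline{R(B^*A_1)}$, which coincide since $A_1$ and $A_1^{1/2}$ have the same closed range-closure), and that adjoints of the normal-equation identity $B^*A_1BG = B^*A_1$ are used in the forms $G^*B^*A_1B = A_1B$ and $(BG)^*A_1B = A_1B$ as needed.
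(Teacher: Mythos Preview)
Your approach to item \textit{1} is correct and matches the paper's: from $B^*A_1BG = B^*A_1$ one sees that $T = BG$ is an $A_1$-projection into $R(B)$, hence $A_1$-selfadjoint and $A_1$-idempotent, and the two identities follow immediately.

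The genuine gap is in item \textit{2}. Your final ``decisive trick'' fails: Douglas' theorem requires $R(A_2G) \subseteq R(B^*A_1^{1/2})$, not merely $R(A_2G) \subseteq \overline{R(B^*A_1^{1/2})}$. The hypothesis $R(A_2G)\subseteq N(A_1B)^\perp$ gives only the latter, and $B^*A_1^{1/2}$ need not have closed range, so the factorization $A_2G = B^*A_1 F$ is not available in general. Moreover, even granting such an $F$, your reduction ``$B^*A_1FBG$ collapses to $B^*A_1F$'' does not follow from any identity at hand, since the $F$ sits between $B^*A_1$ and $BG$ and there is no reason for $A_1FBG = A_1F$.

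You were in fact very close before abandoning that line. You correctly observed that $G^*A_2$ annihilates $N(A_1B)$ (this is exactly the content of $R(A_2G)\subseteq N(A_1B)^\perp$). The missing step is to look at $I - GB$ rather than $GBG - G$: from $B^*A_1BG = B^*A_1$ one gets $B^*A_1B(I - GB) = 0$, so $R(I - GB) \subseteq N(B^*A_1B) = N(A_1B)$. Combining the two facts yields $G^*A_2(I - GB) = 0$, i.e.\ $G^*A_2 = G^*A_2GB$. This single equation now gives both parts of item \textit{2} by the same mechanism you used for item \textit{1}: it says $A_2GB = (GB)^*A_2GB$ is selfadjoint, hence $A_2GB = (GB)^*A_2$; and then $A_2G = (GB)^*A_2G = A_2GBG$. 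This is exactly the paper's route---the paper arrives at $G^*A_2(I-GB)=0$ by invoking the minimality in the definition of $A_1A_2$-inverse directly rather than via Proposition~\ref{caracterizacion A_1A_2-inversa}, but the content is identical.
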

\begin{proof}
If $G$ is an $A_1A_2$-inverse of $B$ then $G$ is an $A_1$-inverse of  $B$. Therefore, by Proposition \ref{caract A-inversa}, 
$B^*A_1BG=B^*A_1$ then $A_1BG=(BG)^*A_1BG \geq 0$ so that $BG$ is $A_1$-selfadjoint. Also, $A_1B=(BG)^*A_1B=A_1BGB$ and item \textit{1} holds.
To prove item \textit{2} observe that  $R(I-GB)\subseteq N(B^*A_1B)$ because $B^*A_1BGB=B^*A_1B$. Therefore for each $y\in \HH$,
by Remark \ref{caracterizacion least squares solution}, it follows that
$$
x=Gy+(I-GB)z, \quad z\in \HH
$$
  is a solution of the normal equation  \eqref{ecuacion normal} and then it is an $A_1$-LSS of $Bx=y$.
Since  $G$ is an $A_1A_2$-inverse of $B$, then
 $ \|Gy\|_{A_2} \leq \|Gy+(I-GB)z\|_{A_2}, $ for all $z\in \HH$,
or equivalently $ \|A_2^{1/2}Gy\|\leq \|A_2^{1/2}Gy+A_2^{1/2}(I-GB)z)\|, $ for all $z\in \HH$, then
   $\langle A_2Gy, (I-GB)z\rangle=0$ for all $z\in \HH$, or $G^*A_2(I-GB)=0$. Finally, in the same way as we did in item \textit{1},  $G^*A_2=G^*A_2GB$ implies item \textit{2} (actually, both conditions are equivalent).
\end{proof}

\medskip

\begin{corollary}
Suppose the pairs $(A_1, R(B))$ and $(A_2, N(A_1B))$ are compatible. Then
$$
G=(I-T_2)B^\dagger T_1
$$
 is an $A_1A_2$-inverse of $B$ for every $T_1\in \Pi(A_1, R(B))$ and for every $T_2\in \Pi(A_2,N(A_1B))$.
\end{corollary}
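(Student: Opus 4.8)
The plan is to verify the two characterizing conditions of Proposition~\ref{caracterizacion A_1A_2-inversa} for the operator $G=(I-T_2)B^\dagger T_1$, where $T_1\in\Pi(A_1,R(B))$ and $T_2\in\Pi(A_2,N(A_1B))$. First I would establish condition~\textit{1}, namely $B^*A_1BG=B^*A_1$. The key observation is that $BB^\dagger=P_{R(B)}$ (since $B$ has closed range), so $BG=BB^\dagger T_1-BB^\dagger T_2'\cdots$—more precisely, $BG=P_{R(B)}T_1 - B B^\dagger T_2 B^\dagger T_1$; but since $R(T_2)\subseteq N(A_1B)$ and we will multiply by $B^*A_1B$ on the left, the $T_2$-term should drop out. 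Concretely, $B^*A_1BG=B^*A_1B(I-T_2)B^\dagger T_1$, and $B^*A_1BT_2=0$ because $R(T_2)\subseteq N(A_1B)\subseteq N(B^*A_1B)$ (as $B^*A_1BT_2 = B^*(A_1BT_2)=0$). Hence $B^*A_1BG=B^*A_1BB^\dagger T_1=B^*A_1P_{R(B)}T_1=B^*A_1T_1$ (using $B^*A_1P_{R(B)}=(P_{R(B)}A_1B)^*=(A_1B)^*=B^*A_1$). Finally $B^*A_1T_1=B^*A_1$ because $T_1\in\Pi(A_1,R(B))$ gives $P_{R(B)}A_1T_1=P_{R(B)}A_1$ by Proposition~\ref{caracterizacion1 Pi(A,S)}, item~\textit{3}, and left-multiplying by $B^*$ (noting $B^*=B^*P_{R(B)}$) yields $B^*A_1T_1=B^*A_1$.

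Next I would verify condition~\textit{2}, $R(A_2G)\subseteq N(A_1B)^\perp$. Write $A_2G=A_2(I-T_2)B^\dagger T_1$. The idea is that $I-T_2$ is an $A_2$-projection (by Lemma~\ref{T A-proy entonces I-T A-proy}, since $T_2$ is an $A_2$-projection into $N(A_1B)$, hence an $A_2$-projection), so by Proposition~\ref{Lema de Krein general} combined with Remark~\ref{R(AT)}, $R(I-T_2)\subseteq R(A_2T_2)^\perp=A_2(N(A_1B))^\perp$. Then $A_2R(I-T_2)\subseteq A_2\big(A_2(N(A_1B))^\perp\big)$; but we actually want $R(A_2(I-T_2))\subseteq N(A_1B)^\perp$. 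For this, note that $A_2(I-T_2)=(I-T_2)^*A_2$ since $I-T_2$ is $A_2$-selfadjoint, and $R\big((I-T_2)^*A_2\big)\subseteq R((I-T_2)^*)$. Now $N(I-T_2)^{\perp}=\overline{R((I-T_2)^*)}$, and since $T_2(I-T_2)$ acts as... more directly: $(I-T_2)^*$ has range contained in $N(T_2^*)^\perp$; but the cleanest route is $A_2(I-T_2)x=(I-T_2)^*A_2x$ and for any $w\in N(A_1B)$, $\langle (I-T_2)^*A_2x,w\rangle=\langle A_2x,(I-T_2)w\rangle=0$ because $w\in N(A_1B)$ and $I-T_2$ kills $N(A_1B)$... wait, $T_2$ is the identity on $N(A_1B)$ up to the compatibility structure, so $(I-T_2)w$ need not vanish; rather $(I-T_2)w\in A_2(N(A_1B))^\perp$, which is not obviously orthogonal to $A_2x$. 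The correct argument uses instead that $R(A_2T_2^*)$... Let me restate: since $T_2\in\Pi(A_2,N(A_1B))$, Proposition~\ref{caracterizacion matricial Pi(A,S)} (with $P_{N(A_1B)}$ inducing the matrix form of $A_2$) shows $R(A_2T_2)=A_2(N(A_1B))$ by Remark~\ref{R(AT)}, and $A_2(I-T_2)=A_2-A_2T_2=A_2-(A_2T_2)$, so $R(A_2(I-T_2))\subseteq R(A_2)$; but more to the point $P_{N(A_1B)}A_2(I-T_2)=P_{N(A_1B)}A_2 - P_{N(A_1B)}A_2T_2 = P_{N(A_1B)}A_2 - P_{N(A_1B)}A_2=0$ by item~\textit{3} of Proposition~\ref{caracterizacion1 Pi(A,S)} applied to $T_2$. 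Hence $R(A_2(I-T_2))\subseteq N(P_{N(A_1B)})=N(A_1B)^\perp$, and therefore $R(A_2G)=R(A_2(I-T_2)B^\dagger T_1)\subseteq N(A_1B)^\perp$.

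With both conditions of Proposition~\ref{caracterizacion A_1A_2-inversa} established, $G$ is an $A_1A_2$-inverse of $B$. The main obstacle is the second condition: one must be careful to invoke the \emph{matrix/compatibility} characterization $P_{N(A_1B)}A_2T_2=P_{N(A_1B)}A_2$ (item~\textit{3} of Proposition~\ref{caracterizacion1 Pi(A,S)} with $\mathcal S=N(A_1B)$) rather than the range-orthogonality statement of Proposition~\ref{Lema de Krein general}, since it is the orthogonal complement $N(A_1B)^\perp$ (not the $A_2$-range complement) that appears in the target inclusion. Condition~\textit{1} is routine once one notes $BB^\dagger=P_{R(B)}$, $B^*P_{R(B)}=B^*$, and that $B^*A_1B$ annihilates $R(T_2)\subseteq N(A_1B)$. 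I would also remark that this corollary, combined with Proposition~\ref{G A1A2inversa ent G weak A1A2 inversa}, shows such $G$ is automatically a weak $A_1A_2$-inverse.
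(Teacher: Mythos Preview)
Your overall strategy---verify the two conditions of Proposition~\ref{caracterizacion A_1A_2-inversa}---is exactly the paper's. One small slip in condition~\textit{1}: the parenthetical ``$B^*A_1P_{R(B)}=(P_{R(B)}A_1B)^*=(A_1B)^*$'' would require $P_{R(B)}A_1B=A_1B$, i.e.\ $R(A_1B)\subseteq R(B)$, which is false in general. The equality $B^*A_1P_{R(B)}T_1=B^*A_1T_1$ you need holds for the simpler reason that $P_{R(B)}T_1=T_1$ (since $R(T_1)\subseteq R(B)$); with that fix your computation of $B^*A_1BG=B^*A_1$ is correct and more direct than the paper's, which instead establishes the stronger pair $A_1BGB=A_1B$ and $A_1BG=(BG)^*A_1$.

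For condition~\textit{2} your final argument is correct and cleaner than the paper's. You use Proposition~\ref{caracterizacion1 Pi(A,S)}\,(3) with $\mathcal S=N(A_1B)$ to obtain $P_{N(A_1B)}A_2(I-T_2)=0$ in one line, whence $R(A_2G)\subseteq R(A_2(I-T_2))\subseteq N(A_1B)^\perp$. The paper instead invokes Proposition~\ref{Lema de Krein general} and Remark~\ref{R(AT)} to get $R(I-T_2)\subseteq R(A_2T_2)^\perp=(A_2N(A_1B))^\perp=A_2^{-1}(N(A_1B)^\perp)$ and then applies $A_2$. Both routes reach the same inclusion; yours avoids the preimage identity $(A_2\mathcal M)^\perp=A_2^{-1}(\mathcal M^\perp)$ and the exploratory detours in your write-up before you settled on it can be deleted.
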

\begin{proof}
Consider $T_1\in \Pi(A_1, R(B))$, $T_2\in \Pi(A_2,N(A_1B))$ and $G=(I-T_2)B^\dagger T_1$. Then
$$
A_1BGB=A_1BB^\dagger T_1B-A_1BT_2B^\dagger T_1B= A_1T_1B=A_1B,
$$
because $R(T_1)\subseteq R(B)$, $R(T_2)\subseteq N(A_1B)$ and $A_1T_1P_{R(B)}=A_1P_{R(B)}$. Also, observe that
$$
\begin{array}{ccl}
(BG)^*A_1  & = &(BB^\dagger T_1-BT_2B^\dagger T_1)^*A_1=T_1^*A_1-(A_1BT_2B^\dagger T_1)^* \\
           & = & A_1 T_1=A_1BG.
 \end{array}
 $$
Finally, by Proposition \ref{Lema de Krein general} and Remark \ref{R(AT)}, it holds that $
R(A_2G)  \subseteq A_2R(I-T_2)\subseteq A_2[R(A_2T_2)^\perp]=A_2[(A_2N(A_1B))^\perp]=A_2[A_2^{-1}(N(A_1B)^\perp)]
         \subseteq N(A_1B)^\perp.$
Therefore,  by Proposition \ref{caracterizacion A_1A_2-inversa}, it follows that $G$ is an $A_1A_2$-inverse of $B$.
\end{proof}
\medskip

\begin{proposition}
The operator $B$ admits an $A_1A_2$-inverse if and only if the pairs $(A_1, R(B))$ and $(A_2, N(A_1B))$ are compatible.
\end{proposition}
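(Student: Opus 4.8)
The plan is to prove the two implications separately, using that one of them is essentially already in hand. The ``if'' direction is exactly the previous corollary: if $(A_1,R(B))$ and $(A_2,N(A_1B))$ are compatible, then $\Pi(A_1,R(B))$ and $\Pi(A_2,N(A_1B))$ are nonempty by Proposition \ref{(A,S) cpt sii Pi(A,S) no vacio}, and choosing any $T_1\in\Pi(A_1,R(B))$ and $T_2\in\Pi(A_2,N(A_1B))$ the operator $G=(I-T_2)B^\dagger T_1$ is an $A_1A_2$-inverse of $B$. For the converse, assume $G$ is an $A_1A_2$-inverse of $B$. I would first read off compatibility of $(A_1,R(B))$: by Proposition \ref{caracterizacion A_1A_2-inversa} the first defining relation is $B^*A_1BG=B^*A_1$, which by Proposition \ref{caract A-inversa} says that $G$ is an $A_1$-inverse of $B$, and then Proposition \ref{existe A-inversa de B sii (A,R(B)) cpt} yields that $(A_1,R(B))$ is compatible.

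The heart of the proof is compatibility of $(A_2,\M)$ with $\M:=N(A_1B)$, which I would obtain from the criterion $\HH=\M+A_2(\M)^\perp$ of Proposition \ref{caracterizaciones par compatible}. Two range inclusions do it. First, item 1 of Proposition \ref{G A1A2inversa ent G weak A1A2 inversa} gives $A_1BGB=A_1B$, hence $B^*A_1B(I-GB)=0$, so $R(I-GB)\subseteq N(B^*A_1B)=N(A_1B)=\M$; writing $x=GBx+(I-GB)x$ for each $x\in\HH$ gives $\HH=R(GB)+\M$. Second, item 2 of Proposition \ref{caracterizacion A_1A_2-inversa} gives $R(A_2G)\subseteq N(A_1B)^\perp=\M^\perp$, hence $R(A_2GB)\subseteq\M^\perp$, i.e. $R(GB)\subseteq A_2^{-1}(\M^\perp)=A_2(\M)^\perp$ (since $A_2^*=A_2$). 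Combining, $\HH=R(GB)+\M\subseteq A_2(\M)^\perp+\M$, which is the required identity, so $(A_2,N(A_1B))$ is compatible.

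I do not expect a genuine obstacle here; the one point to get right is the observation that the two ``weak inverse'' identities of Propositions \ref{caracterizacion A_1A_2-inversa} and \ref{G A1A2inversa ent G weak A1A2 inversa} together produce exactly a splitting of $\HH$ adapted to $\M=N(A_1B)$ and $A_2$: the range of $GB$ sits inside $A_2(\M)^\perp$, while $I-GB$ maps everything into $\M$. The identifications $N(B^*A_1B)=N(A_1B)$ and $(A_2(\M))^\perp=A_2^{-1}(\M^\perp)$ are routine. One could alternatively note that $GB$ is both $A_2$-idempotent and $A_2$-selfadjoint, hence an $A_2$-projection, but the direct range computation above is shorter.
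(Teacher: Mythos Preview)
Your proof is correct. The ``if'' direction and the compatibility of $(A_1,R(B))$ in the ``only if'' direction match the paper's argument exactly. For the compatibility of $(A_2,N(A_1B))$, however, the paper takes a different path: it simply invokes an external reference, \cite[Proposition~3.9]{[GirMaeMPe10]}, and gives no details. You instead supply a direct, self-contained argument via the criterion $\HH=\M+A_2(\M)^\perp$ of Proposition~\ref{caracterizaciones par compatible}, using precisely the two pieces of information furnished by Propositions~\ref{caracterizacion A_1A_2-inversa} and~\ref{G A1A2inversa ent G weak A1A2 inversa}: the identity $A_1BGB=A_1B$ forces $R(I-GB)\subseteq\M$ and hence $\HH=R(GB)+\M$, while $R(A_2G)\subseteq\M^\perp$ pushes $R(GB)$ into $A_2(\M)^\perp$. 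This is arguably the better choice here, since it keeps the proof internal to the paper and makes transparent exactly which properties of an $A_1A_2$-inverse are doing the work; the paper's citation is shorter but opaque. One minor remark: from $A_1BGB=A_1B$ you get $R(I-GB)\subseteq N(A_1B)$ immediately, so the detour through $B^*A_1B$ and the identification $N(B^*A_1B)=N(A_1B)$ is not needed.
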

\begin{proof}
Suppose  $B$ admits an $A_1A_2$-inverse. Then, by Proposition \ref{existe A-inversa de B sii (A,R(B)) cpt}, the pair $(A_1R(B))$ is compatible. The pair $(A_2, N(A_1B))$ turns out to be compatible by \cite[Proposition 3.9]{[GirMaeMPe10]}. The converse follows by the previous result.
\end{proof}

\medskip

\begin{definition}\rm
An operator $G\in L(\HH)$ is called a \emph{weak $A_1A_2$-inverse} of $B$ if satisfies
\begin{equation}\label{definicion weak A1A2-inversa}
\left\{\begin{array}{cc}
 A_1BGB=A_1B,\,  & A_1BG=(BG)^*A_1\\
 A_2GBG=A_2G, \, & A_2GB=(GB)^*A_2.
\end{array}\right.
\end{equation}

\end{definition}

\smallskip
 If $A_1=A_2=I$ and $G$ is a weak $A_1A_2$-inverse of $B$, then $G=B^\dagger$.

Observe that if $G\in L(\HH)$ is  an $A_1A_2$-inverse of $B$ then, by Proposition \ref{G A1A2inversa ent G weak A1A2 inversa}, $G$  is a  weak $A_1A_2$-inverse of $B$.

\begin{remark}\rm \label{caract weak A1A2-inversa}
Observe that \eqref{definicion weak A1A2-inversa} is equivalent to $B^*A_1BG=B^*A_1$ and $G^*A_2GB=G^*A_2$.
\end{remark}
\begin{lemma}
Consider $G \in L(\HH)$. Then $G$ is a weak $A_1A_2$-inverse of $B$ if and only if $G$ is an $A_1$-inverse of $B$ and $B$ is an $A_2$-inverse of $G$.
\end{lemma}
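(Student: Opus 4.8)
The plan is to unwind the two definitions symmetrically and match them against the algebraic characterizations already established. Recall from Remark \ref{caract weak A1A2-inversa} that $G$ is a weak $A_1A_2$-inverse of $B$ if and only if
$$
B^*A_1BG=B^*A_1 \quad \textrm{and} \quad G^*A_2GB=G^*A_2.
$$
Now I would invoke Proposition \ref{caract A-inversa} twice: once with the operator $B$ and the weight $A_1$, which says that the first equation $B^*A_1BG=B^*A_1$ holds if and only if $G$ is an $A_1$-inverse of $B$; and once with the roles of $B$ and $G$ interchanged and the weight $A_2$, which says that the second equation $G^*A_2GB=G^*A_2$ holds if and only if $B$ is an $A_2$-inverse of $G$. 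Putting these two equivalences together yields exactly the claim.

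The only point requiring a word of care is that Proposition \ref{caract A-inversa} is stated under the running assumption that the operator playing the role of ``$B$'' has closed range (this was the standing hypothesis of Section 5). When I apply it with $G$ in place of $B$, I need $G$ to have closed range; this need not hold in general. The clean way around this is to observe that the implication ``$G^*A_2GB=G^*A_2 \iff B$ is an $A_2$-inverse of $G$'' is really just Remark \ref{caracterizacion least squares solution} applied to the equation $Gx=y$: for a fixed $y$, $By$ is an $A_2$-LSS of $Gx=y$ precisely when $By$ solves the normal equation $G^*A_2Gx=G^*A_2y$, and letting $y$ range over $\HH$ turns this into the operator identity $G^*A_2GB=G^*A_2$. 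That argument uses only the basic variational characterization of least squares solutions and does not require any closed range hypothesis on $G$. Similarly the first equivalence is just Remark \ref{caracterizacion least squares solution} applied to $Bx=y$.

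So the skeleton is: (i) rewrite ``weak $A_1A_2$-inverse'' via Remark \ref{caract weak A1A2-inversa} as the pair of normal-equation identities $B^*A_1BG=B^*A_1$ and $G^*A_2GB=G^*A_2$; (ii) recognize the first identity, via Remark \ref{caracterizacion least squares solution}, as the statement that $Gy$ is an $A_1$-LSS of $Bx=y$ for every $y$, i.e. $G$ is an $A_1$-inverse of $B$; (iii) recognize the second identity, again via Remark \ref{caracterizacion least squares solution} but now for the equation $Gx=y$, as the statement that $By$ is an $A_2$-LSS of $Gx=y$ for every $y$, i.e. $B$ is an $A_2$-inverse of $G$; (iv) conjoin. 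The main (and only mild) obstacle is keeping the bookkeeping straight about which operator and which weight play which role in each application, and noting that no closed range assumption is needed for the ``$B$ is an $A_2$-inverse of $G$'' direction since we argue directly from the normal equation rather than from Proposition \ref{caract A-inversa}.
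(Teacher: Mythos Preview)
Your proposal is correct and follows the same approach as the paper, which simply cites Remark \ref{caract weak A1A2-inversa} and Proposition \ref{caract A-inversa}. Your observation that Proposition \ref{caract A-inversa} carries a standing closed-range hypothesis (which $G$ need not satisfy) is a valid point the paper glosses over, and your workaround via Remark \ref{caracterizacion least squares solution} is exactly the right fix.
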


\begin{proof} Apply Remark \ref{caract weak A1A2-inversa} and Proposition \ref{caract A-inversa}.
\end{proof}

\smallskip

\smallskip

In  \cite{[CorMae05]}, the authors called  \emph{weighted generalized inverse} of $B$ to an operator  $C\in L(\HH)$ such that
  \begin{equation}\label{weighted generalized inverses}
  BCB=B, \quad CBC=C, \quad A_1BC=(BC)^*A_1, \quad A_2CB=(CB)^*A_2.
  \end{equation}

In \cite[Theorem 3.1]{[CorMae05]}, it is proved that  the pairs $(A_1, R(B))$ and $(A_2, N(B))$ are compatible if and only if $B$ admits a weighted generalized inverse. Observe that in this case, $C$ is a weak $A_1A_2$-inverse of $B$.

Also, it holds that $C$ is a weighted generalized inverse of $B$ if and only if $BC \in\mathcal P(A, R(B))$ and $I-CB\in \mathcal P(A, N(B))$.
In order to generalize this, we now consider the solutions of the system
\begin{equation}\label{weak w.g.i.}
\left\{\begin{array}{l}
 BG \in \Pi( A_1, R(B))\\
 I-GB\in \Pi(A_2, N(B)).
\end{array}\right.
\end{equation}

\smallskip

\begin{proposition}\label{caracterizacion [2]A1A2-PI}
Consider $G \in L(\HH)$, then $G\in L(\HH)$ is  a solution of (\ref{weak w.g.i.}) if and only if
\begin{equation}\label{weak w.g.i. 3 ecuaciones}
BGB=B, \quad A_1BG=(BG)^*A_1, \quad A_2GB=(GB)^*A_2.
\end{equation}
\end{proposition}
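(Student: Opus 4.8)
The plan is to prove the equivalence of \eqref{weak w.g.i.} and \eqref{weak w.g.i. 3 ecuaciones} by translating the two membership conditions $BG\in\Pi(A_1,R(B))$ and $I-GB\in\Pi(A_2,N(B))$ into operator identities, using the characterization of $\Pi(A,\mathcal S)$ from \eqref{caracterizacion2 Pi(A,S)} (equivalently Proposition \ref{caracterizacion1 Pi(A,S)}). First I would unpack the first condition: $BG\in\Pi(A_1,R(B))$ means $R(BG)\subseteq R(B)$ (which is automatic), $A_1BG=(BG)^*A_1$, and $A_1BGP_{R(B)}=A_1P_{R(B)}$. Multiplying the last equality on the right by $B$ and noting $P_{R(B)}B=B$, this gives $A_1BGB=A_1B$; conversely, given $A_1BGB=A_1B$ together with $A_1BG=(BG)^*A_1$, one recovers $A_1BGP_{R(B)}=A_1P_{R(B)}$ since $P_{R(B)}=BB^\dagger$ and $A_1BGBB^\dagger=A_1BB^\dagger$. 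So the first line of \eqref{weak w.g.i.} is equivalent to the pair $A_1BGB=A_1B$, $A_1BG=(BG)^*A_1$.

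Next I would unpack the second condition. Write $E=I-GB$; then $I-GB\in\Pi(A_2,N(B))$ means $R(E)\subseteq N(B)$, $A_2E=E^*A_2$, and $A_2EP_{N(B)}=A_2P_{N(B)}$. The condition $R(I-GB)\subseteq N(B)$ says exactly $B(I-GB)=0$, i.e. $BGB=B$. Given $BGB=B$, I would show the remaining two conditions $A_2E=E^*A_2$ and $A_2EP_{N(B)}=A_2P_{N(B)}$ collapse to the single identity $A_2GB=(GB)^*A_2$: indeed $A_2E=E^*A_2$ is $A_2(I-GB)=(I-GB)^*A_2$, i.e. $-A_2GB=-(GB)^*A_2$, which is precisely $A_2GB=(GB)^*A_2$; and for $A_2EP_{N(B)}=A_2P_{N(B)}$, since $GB$ vanishes on $N(B)$ we have $EP_{N(B)}=P_{N(B)}$ automatically, so this condition is free once $R(E)\subseteq N(B)$ holds. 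Thus the second line of \eqref{weak w.g.i.} is equivalent to $BGB=B$ together with $A_2GB=(GB)^*A_2$.

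Combining the two reductions, \eqref{weak w.g.i.} becomes the system $A_1BGB=A_1B$, $A_1BG=(BG)^*A_1$, $BGB=B$, $A_2GB=(GB)^*A_2$; but the identity $BGB=B$ subsumes $A_1BGB=A_1B$, so the system reduces to exactly \eqref{weak w.g.i. 3 ecuaciones}. I expect the only delicate point to be the bookkeeping around the projections $P_{R(B)}=BB^\dagger$ and $P_{N(B)}=I-B^\dagger B$ (using that $B$ has closed range, as standing in this section), and checking carefully that the two ``extra'' conditions $A_1BGP_{R(B)}=A_1P_{R(B)}$ and $A_2EP_{N(B)}=A_2P_{N(B)}$ are genuinely equivalent to the stated identities rather than merely implied by them; none of this involves any real obstacle, just attentive use of Proposition \ref{caracterizacion1 Pi(A,S)} and the properties of the Moore--Penrose inverse.
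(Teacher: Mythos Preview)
Your proposal is correct and follows essentially the same route as the paper's proof: both arguments unpack the two membership conditions via the characterization in Proposition~\ref{caracterizacion1 Pi(A,S)} (equivalently~\eqref{caracterizacion2 Pi(A,S)}), reduce them to the list $A_1BG=(BG)^*A_1$, $A_1BGB=A_1B$, $BGB=B$, $A_2GB=(GB)^*A_2$, and then observe that $BGB=B$ absorbs $A_1BGB=A_1B$. Your write-up is simply more explicit about the two directions and the role of $P_{R(B)}=BB^\dagger$, $P_{N(B)}=I-B^\dagger B$; one small remark is that the condition $A_2(I-GB)P_{N(B)}=A_2P_{N(B)}$ is automatic regardless of whether $R(I-GB)\subseteq N(B)$, since $GBP_{N(B)}=0$ always holds.
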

\begin{proof}
Note that $G$ is  a solution of (\ref{weak w.g.i.}) if and only if $A_1BG=(BG)^*A_1$, $ A_1BGP_{R(B)}=A_1P_{R(B)}, R(I-GB)\subseteq N(B), A_2(I-GB)=(I-GB)^*A_2$ and $A_2(I-GB)P_{N(B)}=A_2P_{N(B)}$. Equivalently, $A_1BG=(BG)^*A_1, A_1BGB=A_1B, B(I-GB)=0, A_2GB=(GB)^*A_2$, or $BGB=B, A_1BG=(BG)^*A_1$ and $A_2GB=(GB)^*A_2$.
\end{proof}

\smallskip

\begin{corollary}\label{equivalencias weak w.g.i.}
The following statements are equivalent:
\begin{enumerate}
 \item system   (\ref{weak w.g.i.}) admits a solution,
 \item the pairs $(A_1, R(B))$ and $(A_2, N(B))$ are compatible,
 \item $B$ admits a weighted generalized inverse.
\end{enumerate}
\end{corollary}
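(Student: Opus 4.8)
The plan is to establish the three-way equivalence by a cycle of implications, leaning heavily on results already proved in the excerpt. I would prove $(1)\Rightarrow(3)\Rightarrow(2)\Rightarrow(1)$, though in fact the heart of the matter is a single observation connecting system \eqref{weak w.g.i.} with the defining equations \eqref{weighted generalized inverses} of a weighted generalized inverse.

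For $(1)\Rightarrow(3)$: suppose $G$ solves \eqref{weak w.g.i.}. By Proposition \ref{caracterizacion [2]A1A2-PI}, $G$ satisfies \eqref{weak w.g.i. 3 ecuaciones}, i.e. $BGB=B$, $A_1BG=(BG)^*A_1$ and $A_2GB=(GB)^*A_2$. This is almost the system \eqref{weighted generalized inverses}; the only missing equation is $CBC=C$. The standard trick is to replace $G$ by $C:=GBG$. Then one checks routinely that $BCB=B$ (using $BGB=B$ twice), $CBC=C$ (again from $BGB=B$), $BC=BGBG=BG$ so $A_1BC=A_1BG=(BG)^*A_1=(BC)^*A_1$, and similarly $CB=GBGB=GB$ so $A_2CB=(CB)^*A_2$. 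Hence $C$ is a weighted generalized inverse of $B$, giving $(3)$.

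For $(3)\Rightarrow(2)$: this is precisely \cite[Theorem 3.1]{[CorMae05]}, which states that the existence of a weighted generalized inverse of $B$ is equivalent to the compatibility of $(A_1,R(B))$ and $(A_2,N(B))$. For $(2)\Rightarrow(1)$: assume both pairs are compatible. By Proposition \ref{(A,S) cpt sii Pi(A,S) no vacio} there exist an $A_1$-projection into $R(B)$ and an $A_2$-projection into $N(B)$. One way to finish is to invoke \cite[Theorem 3.1]{[CorMae05]} again to produce a weighted generalized inverse $C$, then observe via Proposition \ref{caracterizacion [2]A1A2-PI} that $C$ itself solves \eqref{weak w.g.i.}, since \eqref{weighted generalized inverses} trivially implies \eqref{weak w.g.i. 3 ecuaciones} (drop the equation $CBC=C$). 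This closes the cycle.

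I do not expect any serious obstacle: the proof is essentially a bookkeeping exercise assembling Proposition \ref{caracterizacion [2]A1A2-PI}, Proposition \ref{(A,S) cpt sii Pi(A,S) no vacio}, and the cited \cite[Theorem 3.1]{[CorMae05]}. The only mildly delicate point is the passage $G\mapsto GBG$ in $(1)\Rightarrow(3)$, where one must verify that the $A_1$- and $A_2$-selfadjointness conditions are preserved; this works cleanly because $B(GBG)=BG$ and $(GBG)B=GB$, so the relevant products are literally unchanged. Everything else is immediate from the equivalences already recorded in the excerpt, so the write-up can be kept short, essentially just pointing at the earlier propositions and the one external reference.
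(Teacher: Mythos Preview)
Your proposal is correct. The cycle you chose, $(1)\Rightarrow(3)\Rightarrow(2)\Rightarrow(1)$, differs from the paper's $(1)\Rightarrow(2)\Rightarrow(3)\Rightarrow(1)$, and the only substantive divergence is in how one exits $(1)$. The paper observes that $(1)\Rightarrow(2)$ is immediate: if $G$ solves \eqref{weak w.g.i.} then $\Pi(A_1,R(B))$ and $\Pi(A_2,N(B))$ are nonempty, so both pairs are compatible by Proposition~\ref{(A,S) cpt sii Pi(A,S) no vacio}. You instead prove $(1)\Rightarrow(3)$ directly by the reflexivization $G\mapsto C=GBG$, verifying that $BC=BG$ and $CB=GB$ so that the $A_1$- and $A_2$-selfadjointness conditions transfer unchanged. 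Both routes then rely on \cite[Theorem~3.1]{[CorMae05]} for the link between $(2)$ and $(3)$, and on Proposition~\ref{caracterizacion [2]A1A2-PI} to pass from a weighted generalized inverse back to a solution of \eqref{weak w.g.i.}. The paper's path is a bit shorter since it avoids the $GBG$ computation altogether; your path has the modest advantage of being constructive, explicitly manufacturing a weighted generalized inverse from any solution of \eqref{weak w.g.i.} without detouring through compatibility.
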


\begin{proof}
$\textit{1}\rightarrow \textit{2}$: It is straightforward. $\textit{2}\rightarrow \textit{3}$: It follows by \cite[Theorem 3.1]{[CorMae05]}. $\textit{3}\rightarrow \textit{1}$:
The assertion follows by the previous proposition.
\end{proof}

\smallskip
By Propositon \ref{caracterizacion [2]A1A2-PI} and Corollary \ref{equivalencias weak w.g.i.} it follows that  (\ref{weighted generalized inverses}) has a solution if and only if (\ref{weak w.g.i. 3 ecuaciones}) has a solution.

\medskip



\end{document}